\def\CC{{\cal C}}
\def\GG{{G}}
\def\RE{{\cal R}}
\def\GE{{\cal G}}
\def\BE{{\cal B}}
\def\ZZ{{\mathbb Z}}
\newtheorem{theorem}{Theorem}
\newtheorem{lemma}[theorem]{Lemma}
\begin{document}
\title{Short Cycle Covers of Cubic Graphs and Graphs with Minimum Degree Three\thanks{The work of the first three authors was partially supported by the grant GACR 201/09/0197.}}
\author{Tom{\'a}{\v s} Kaiser\thanks{Institute for Theoretical Computer Science (ITI) and Department of Mathematics, University of West Bohemia, Univerzitn\'\i{} 8, 306 14 Plze{\v n}, Czech Republic. E-mail: {\tt kaisert@kma.zcu.cz}. Supported by Research Plan MSM 4977751301 of the Czech Ministry of Education.}\and
        Daniel Kr{\'a}l'\thanks{Institute for Theoretical Computer Science, Faculty of Mathematics and Physics, Charles University, Malostransk{\'e} n{\'a}m{\v e}st{\'\i} 25, 118 00 Prague, Czech Republic. E-mail: {\tt kral@kam.mff.cuni.cz}. The Institute for Theoretical Computer Science (ITI) is supported by Ministry of Education of the Czech Republic as project 1M0545.}\and
	Bernard Lidick{\'y}\thanks{Department of Applied Mathematics, Faculty of Mathematics and Physics, Charles University, Malostransk{\'e} n{\'a}m{\v e}st{\'\i} 25, 118 00 Prague, Czech Republic. E-mail: {\tt \{bernard,bim,samal\}@kam.mff.cuni.cz}.}\and
\newcounter{lth}
\setcounter{lth}{4}
	Pavel Nejedl{\'y}$^{\fnsymbol{lth}}$\and
	Robert {\v S}{\'a}mal$^{\fnsymbol{lth}}$\thanks{Department of Mathematics, Simon Fraser University, 8888 University Dr., Burnaby, BC, V5A 1S6, Canada and Department of Applied Mathematics, Faculty of Mathematics and Physics, Charles University, Malostransk{\'e} n{\'a}m{\v e}st{\'\i} 25, 118 00 Prague, Czech Republic. E-mail: {\tt samal-at-kam.mff.cuni.cz}.}}
\date{}
\maketitle
\begin{abstract}
The Shortest Cycle Cover Conjecture of Alon and Tarsi
asserts that the edges of every bridgeless
graph with $m$ edges can be covered by cycles of total length at most
$7m/5=1.400m$. We show that every cubic bridgeless graph has a cycle
cover of total length at most $34m/21\approx 1.619m$ and every
bridgeless graph with minimum degree three has a cycle cover 
of total length at most $44m/27\approx 1.630m$.
\end{abstract}

\section{Introduction}
\label{sect-intro}

Cycle covers of graphs form a prominent topic in graph theory
which is closely related to several deep and open problems. A {\em cycle}
in a graph is a subgraph with all degrees even. A {\em cycle cover}
is a collection of cycles such that each edge is contained in at least
one of the cycles; we say that each edge is {\em covered}.
The Cycle Double Cover Conjecture of Seymour~\cite{bib-seymour79} and
Szekeres~\cite{bib-szekeres73} asserts that every bridgeless
graph $\GG$ has a collection of cycles containing each edge of $\GG$
exactly twice which is called a {\em cycle double cover}.
In fact, it was conjectured by Celmins~\cite{bib-celmins84} and
Preissmann~\cite{bib-preissmann81} that every graph has a cycle cover
comprised of five cycles.

The Cycle Double Cover Conjecture is related to other deep conjectures in graph
theory. In particular, it is known to be implied
by the Shortest Cycle Cover Conjecture of Alon and Tarsi~\cite{bib-alon85+}
which asserts that every bridgeless graph with $m$ edges has a cycle cover of
total length at most $7m/5$. Recall that the {\em length} of a cycle is
the number of edges contained in it and the length of a cycle cover
is the sum of the lengths of its cycles. 
The reduction of the Cycle Double Cover Conjecture
to the Shortest Cycle Cover Conjecture can be found in
the paper of Jamshy and Tarsi~\cite{bib-jamshy92+}.

The best known general result on short cycle covers
is due to Alon and Tarsi~\cite{bib-alon85+} and
Bermond, Jackson and Jaeger~\cite{bib-bermond83+}: every
bridgeless graph with $m$ edges has a cycle cover of total
length at most $5m/3\approx 1.667m$.
There are numerous results on short cycle covers
for special classes of graphs, e.g., graphs with no short cycles,
highly connected graphs or graphs admitting a nowhere-zero $4$-/$5$-flow, 
see e.g.~\cite{bib-fan92,bib-fan94,bib-jackson90,bib-jamshy87+,bib-raspaud91}.
The reader is referred to the monograph of Zhang~\cite{bib-zhang97}
for further exposition of such results where an entire chapter
is devoted to results on the Shortest Cycle Cover Conjecture.

The least restrictive of refinements of the general
bound of Alon and Tarsi~\cite{bib-alon85+} and
Bermond, Jackson and Jaeger~\cite{bib-bermond83+}
are the bounds for cubic graphs.
Jackson~\cite{bib-jackson94}
showed that every cubic bridgeless graph with $m$ edges
has a cycle cover of total length at most $64m/39\approx 1.641m$ and
Fan~\cite{bib-fan94} later showed that every such graph
has a cycle cover of total length at most $44m/27\approx 1.630m$.
In this paper, we strengthen this bound in two different ways. First,
we improve it to $34m/21\approx 1.619m$ for {\em cubic} bridgeless
graphs. And second, we show that the bound $44m/27$
also holds for $m$-edge bridgeless graphs with {\em minimum degree three},
i.e., we extend the result from~\cite{bib-fan94} on cubic graphs
to all graphs with minimum degree three.
As in~\cite{bib-fan94}, the cycle
covers that we construct consist of at most three cycles.

Though the improvements of the original bound of $5m/3=1.667m$
on the length of a shortest cycle cover of an $m$-edge bridgeless graph
can seem to be rather minor, obtaining a bound below $8m/5=1.600m$
for a significant class of graphs might be quite challenging since
the bound of $8m/5$ is implied by Tutte's $5$-Flow Conjecture~\cite{bib-jamshy87+}.

The paper is organized as follows. In Section~\ref{sec-prelim}, we introduce
auxiliary notions and results needed for our main results. We demonstrate
the introduced notation in Section~\ref{sec-intermezzo} where we reprove
the bound $5m/3$ of Alon and Tarsi~\cite{bib-alon85+} and
Bermond, Jackson and Jaeger~\cite{bib-bermond83+} using our terminology.
In Sections~\ref{sec-cubic} and~\ref{sec-mindegree}, we prove our bounds
for cubic graphs and graphs with minimum degree three.

\section{Preliminaries}
\label{sec-prelim}

In this section, we introduce notation and auxiliary concepts used throughout the paper.
We focus on those terms where confusion could arise and
refer the reader to standard graph theory textbooks,
e.g.~\cite{bib-diestel00}, for exposition of other notions.

Graphs considered in this paper can have loops and multiple (parallel) edges.
If $E$ is a set of edges of a graph $\GG$, $\GG\setminus E$ denotes
the graph with the same vertex set with the edges of $E$ removed.
If $E=\{e\}$, we simply write $\GG\setminus e$ instead of $\GG\setminus\{e\}$.
For an edge $e$ of $\GG$, $\GG/e$ is the graph obtained by contracting
the edge $e$, i.e., $\GG/e$ is the graph with the end-vertices of
$e$ identified, the edge $e$ removed and all the other edges,
including new loops and parallel edges, preserved.
Note that if $e$ is a loop, then $\GG/e=\GG\setminus e$.
Finally, for a set $E$ of edges of a graph $\GG$,
$\GG/E$ denotes the graph obtained by contracting all edges contained
in $E$. If $\GG$ is a graph and $v$ is a vertex of $\GG$ of degree two,
then the graph obtained from $\GG$ by {\em suppressing} the vertex $v$
is the graph obtained from $\GG$ by contracting one of the edges incident
with $v$, i.e., the graph obtained by replacing the two-edge path
with the inner vertex $v$ by a single edge.

An {\em edge-cut} in a graph $\GG$ is a set $E$ of edges such that
the vertices of $\GG$ can be partitioned into two sets $A$ and $B$
such that $E$ contains exactly the edges with one end-vertex in $A$ and
the other in $B$. Such an edge-cut is also denoted by $E(A,B)$.
Note that edge-cuts need not be minimal sets of edges whose removal
increases the number of components of $\GG$.
An edge forming an edge-cut of size one is called a {\em bridge} and
graphs with no edge-cuts of size one are said to be {\em bridgeless}.
Note that we do not require bridgeless graphs to be connected.
The quantity $e(X,Y)$ denotes the number of edges with one end-vertex in $X$ and
the other vertex in $Y$; in particular, if $E(A,B)$ is an edge-cut of $G$,
then $e(A,B)$ is its size.
A graph $\GG$ with no edge-cuts of odd size less than $k$ is
said to be {\em $k$-odd-connected}.
For every set $F$ of edges of~$\GG$, edge-cuts in~$\GG/F$ correspond to edge-cuts
(of the same size) in~$\GG$. Therefore, if $\GG$~has no edge-cuts
of size~$k$, then $\GG/F$ also has no edge-cuts of size~$k$.

As said before, a {\em cycle} of a graph $\GG$ is a subgraph of $\GG$
with all vertices of even degree. A {\em circuit} is a connected
subgraph with all vertices of degree two and a {\em $2$-factor}
is a spanning subgraph with all vertices of degree two.

\subsection{Splitting and expanding vertices}
\label{sub-split}

In the proof of our main result, we will need to construct a nowhere-zero
$\ZZ_2^2$-flow of a special type. 
In order to exclude some ``bad'' nowhere-zero flows, we will first modify
the graph under consideration in such a way that some of its edges must get the same
flow value. This goal will be achieved by splitting some of the vertices
of the graph.
Let $\GG$ be a graph, $v$ a vertex of $\GG$ and $v_1$ and $v_2$ some of
the neighbors of $v$ in $\GG$.
The graph $\GG.v_1vv_2$ that is obtained
by removing the edges $vv_1$ and $vv_2$ from $\GG$ and
adding a two-edge path between $v_1$ and $v_2$ (see Figure~\ref{fig-splitting})
is said to be obtained by {\em splitting the vertices $v_1$ and $v_2$ from the vertex $v$}.
Note that if $v_1=v\not=v_2$, i.e., the edge $vv_1$ is a loop,
the graph $\GG.v_1vv_2$ is the graph obtained
from $\GG$ by removing the loop $vv_1$ and subdividing the edge $vv_2$.
Similarly, if $v_1\not=v=v_2$, $\GG.v_1vv_2$ is obtained by removing
the loop $vv_2$ and subdividing the edge $vv_1$.
Finally, if $v_1=v=v_2$, then the graph $\GG.v_1vv_2$ is obtained
from $\GG$ by removing the loops $vv_1$ and $vv_2$ and introducing
a new vertex joined by two parallel edges to $v$.

\begin{figure}
\begin{center}
\epsfbox{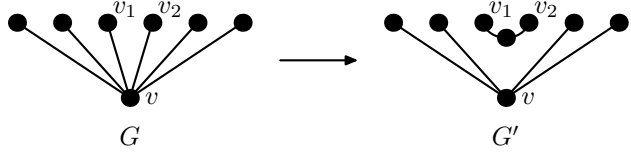}
\end{center}
\caption{Splitting the vertices $v_1$ and $v_2$ from the vertex $v$.}
\label{fig-splitting}
\end{figure}

Classical (and deep) results of 
Fleischner~\cite{bib-fleischner76},
Mader~\cite{bib-mader78} and Lov{\'a}sz~\cite{bib-lovasz76}
assert that it is possible
to split vertices without creating new small edge-cuts.
Let us now formulate one of the corollaries of their results.

\begin{lemma}
\label{lm-split-orig}
Let $\GG$ be a $5$-odd-connected graph.
For every vertex $v$ of $\GG$ of degree four, six or more,
there exist two neighbors $v_1$ and $v_2$ of the vertex $v$ such
that the graph $\GG.v_1vv_2$ is also $5$-odd-connected.
\end{lemma}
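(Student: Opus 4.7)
The plan is to reduce the statement to a splitting-off problem for odd edge-cuts and then to invoke the splitting-off theorems of Fleischner, Mader, and Lov{\'a}sz cited immediately before the lemma. First I would observe that $\GG.v_1vv_2$ is obtained from the ``lifted'' graph $\GG'=(\GG\setminus\{vv_1,vv_2\})\cup\{v_1v_2\}$ by subdividing the newly added edge $v_1v_2$ with a degree-two vertex $w$. Subdividing produces only the size-two (hence even) cut isolating $w$ and preserves all other edge-cut sizes, so $\GG.v_1vv_2$ is $5$-odd-connected if and only if $\GG'$ is. The task therefore reduces to finding neighbours $v_1, v_2$ of $v$ such that $\GG'$ is $5$-odd-connected.

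Next I would analyse how lifting changes cut sizes. A short case analysis on the location of $v, v_1, v_2$ in a partition $(A,B)$ of $V(\GG)$ shows that $|E_{\GG'}(A,B)|$ equals $|E_{\GG}(A,B)|$ except when $v$ is separated from both $v_1$ and $v_2$, in which case the cut drops by exactly two. Consequently $\GG'$ can fail to be $5$-odd-connected only in one of two ways: (i)~the cut isolating $v$ itself becomes an odd cut smaller than five in $\GG'$, or (ii)~some odd cut of $\GG$ of size exactly five that separates $v$ from both $v_1$ and $v_2$ shrinks to size three. The degree hypothesis is tailored to block~(i): the new isolation cut of $v$ has size $\deg(v)-2$, which is odd and smaller than five only when $\deg(v)=5$, which the statement excludes; for $\deg(v)=4$ it has even size two, and for $\deg(v)\geq 6$ it is either even or at least five.

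To rule out~(ii) I would call a subset $S\subseteq V(\GG)\setminus\{v\}$ with $|V(\GG)\setminus S|\geq 2$ \emph{tight} if $|E_{\GG}(S,V(\GG)\setminus S)|=5$, and seek a pair of edge-endpoints at $v$ not jointly contained in any tight set. The classical splitting-off results cited just before the lemma guarantee precisely such a pair at every vertex of degree at least four not incident with a bridge, the additional exclusion $\deg(v)\neq 5$ coming from~(i) and being specific to the odd-cut setting. The heart of the argument is submodularity of the cut function, which forces the family of tight sets to have a nearly laminar structure and thereby prevents it from jointly covering all candidate pairs of neighbours of $v$.

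The main obstacle is precisely the verification of~(ii) that the tight sets cannot jointly cover every pair of edge-endpoints at $v$ when $\deg(v)\in\{4,6,7,\ldots\}$. The delicate case is $\deg(v)=4$, where only six candidate pairs are available and each tight set can forbid several of them; handling it requires the full strength of the submodular inequalities for cut functions, which is why the deep theorems of Fleischner, Mader, and Lov{\'a}sz are invoked rather than a splittable pair $v_1, v_2$ being constructed by hand.
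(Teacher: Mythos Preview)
The paper does not actually prove this lemma; it merely states it as ``one of the corollaries'' of the classical splitting-off results of Fleischner, Mader and Lov{\'a}sz and moves on. Your sketch is therefore already more detailed than what the paper provides, and your overall plan---reduce to the lifted graph, separate the cut isolating $v$ from all other cuts, and handle the latter via the cited theorems---is the intended route.

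One subtlety in your case~(ii) deserves a comment. Mader's theorem, as usually stated, preserves the local edge-connectivity $\lambda(x,y)$ for all $x,y\neq v$. In a graph that is only $5$-odd-connected there may be even cuts of size two, and if $\lambda_{\GG}(x,y)=2$ then Mader's guarantee $\lambda_{\GG'}(x,y)\ge 2$ does not by itself forbid a separate odd $5$-cut through $x$ and $y$ from dropping to a $3$-cut after the lift. So a literal black-box application of Mader does not close the argument; what is needed is the submodular uncrossing argument run specifically for odd cuts (equivalently for $T$-cuts, where $T$ is the set of odd-degree vertices, since a cut has odd size precisely when it is a $T$-cut). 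You correctly identify submodularity as ``the heart of the argument'', and indeed the paper carries out exactly this odd-cut uncrossing in Lemma~\ref{lm-uncross} and Lemma~\ref{lm-split-gener}; the present lemma can then be read off by the same counting as in Lemma~\ref{lm-split4} and its siblings. Your plan is sound, but the step ``invoke the classical theorems'' should be understood as ``rerun their submodular technique in the odd-cut setting'' rather than as a direct citation of the edge-connectivity statements.
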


Zhang~\cite{bib-zhang02} proved a version of Lemma~\ref{lm-split-orig}
where only some pairs of vertices are allowed to be split off (also see~\cite{bib-szigeti08}
for related results).

\begin{lemma}
\label{lm-split-cyclic}
Let $\GG$ be an $\ell$-odd-connected graph for an odd integer $\ell$.
For every vertex $v$ of $\GG$ with neighbors $v_1,\ldots,v_k$,
$k\not=2,\ell$, there exist two neighbors $v_i$ and $v_{i+1}$ such
that the graph $\GG.v_ivv_{i+1}$ is also $\ell$-odd-connected (indices
are modulo $k$).
\end{lemma}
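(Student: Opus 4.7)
The plan is to argue by contradiction, reformulating the obstacle to each particular splitting as an odd edge-cut of a tightly prescribed size, and then uncrossing these ``blocking'' cuts until a rigid structure forces $k$ to equal one of the excluded values. I first analyze the effect of a single splitting on edge-cuts of $\GG$: for any partition $(A,B)$ of $V(\GG)$ and any choice of side in $\GG.v_ivv_{i+1}$ for the newly introduced degree-two vertex, the cut size changes by $-2$, $0$, or $+2$ (so parity is preserved), and the change is $-2$ precisely when $v$ lies on one side while $v_i$ and $v_{i+1}$ both lie on the other. Consequently, $\GG.v_ivv_{i+1}$ admits an odd edge-cut of size less than $\ell$ if and only if $\GG$ itself admits an odd edge-cut $E(A,B)$ with $v\in A$, $v_i,v_{i+1}\in B$, and $e(A,B)\le\ell$; and because $\GG$ is already $\ell$-odd-connected and both $\ell$ and $\ell-2$ are odd, this forces $e(A,B)=\ell$ exactly. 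Call $\{v_i,v_{i+1}\}$ \emph{blocked} by such a witness set $A_i$, and assume for contradiction that every cyclic pair is blocked.

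The main tool is submodularity of the cut function, together with its parity form:
\[
e(X,\overline X)+e(Y,\overline Y)\ge e(X\cap Y,\overline{X\cap Y})+e(X\cup Y,\overline{X\cup Y}),
\]
with the two sides congruent modulo~$2$. I would apply this to two crossing witnesses $A_i$ and $A_j$: both contain $v$ and the left-hand side equals $2\ell$, so neither right-hand cut exceeds $\ell$; since $\GG$ is $\ell$-odd-connected, either both right-hand cuts are even, or both are odd of size exactly $\ell$. In the latter case, inspecting which of $v_i,v_{i+1},v_j,v_{j+1}$ fall into $A_i\cap A_j$ and which into $A_i\cup A_j$ allows one to replace one of the two witnesses by the corresponding nested tight cut, still blocking the same cyclic pair but strictly reducing the crossing complexity of the family. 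Iterating yields a laminar family of witnesses; since they all contain $v$, laminarity forces them to form a chain $A_{\sigma(1)}\subseteq A_{\sigma(2)}\subseteq\cdots\subseteq A_{\sigma(k)}$.

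Under this chain structure the traces $A_{\sigma(i)}\cap\{v_1,\ldots,v_k\}$ form a nested sequence, and each $A_j$ still excludes the two indices $j$ and $j+1$. Counting the edges incident to $v$ that cross each of the $k$ tight cuts of size $\ell$ and using that each neighbor index is excluded from exactly two prescribed terms of the chain produces a relation between $k$, $\ell$, and the chain positions whose only integer solutions are $k=2$ or $k=\ell$; both are excluded by hypothesis, yielding the contradiction. The main obstacle is the uncrossing step, and specifically the ``even--even'' branch where submodularity does not deliver new tight odd cuts; here one must exploit the flexibility that every blocked pair typically has many valid witnesses, arguing that a different choice of $A_i$ for one of the two crossing pairs avoids the bad intersection. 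A secondary bookkeeping annoyance comes from the fact that loops and parallel edges are permitted, so the cases $v_i=v$ or $v_i=v_{i+1}$ require minor adaptations that nevertheless fit within the same framework.
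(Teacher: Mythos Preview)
Your setup is correct: the characterization of a failed splitting as a tight odd cut of size exactly~$\ell$ separating $v$ from $\{v_i,v_{i+1}\}$ is right, and uncrossing such witnesses is the natural move. But there is a genuine gap in the even--even branch. Your proposed fix (``choose a different witness for one of the pairs'') is not an argument: a given cyclic pair may have a unique blocking witness, and nothing you have written rules out that every choice still crosses. The paper avoids this obstacle not by brute uncrossing of arbitrary pairs but by an \emph{inductive} uncrossing that only ever compares two cuts sharing many neighbours on the far side: from the assumption that all consecutive splittings among $v_1,\ldots,v_{k-1}$ fail one gets a tight cut with $v_1,\ldots,v_{k-1}$ outside, and symmetrically one with $v_2,\ldots,v_k$ outside. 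For this particular pair of cuts the ``diagonal'' $e(A\cap A',\overline{A\cup A'})$ is positive (it contains the edges $vv_2,\ldots,vv_{k-1}$), and then the explicit two-cuts structure lemma (Lemma~\ref{lm-uncross}) forces the odd--odd outcome. So the even--even branch simply does not arise in the paper's argument, whereas in yours it does and is not handled.

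Your endgame is also more involved than necessary. Once you do have a chain $A_{(1)}\subseteq\cdots\subseteq A_{(k)}$ of witnesses, observe that every $v_m$ lies outside \emph{some} member of the chain and hence outside the minimal one $A_{(1)}$; thus $A_{(1)}$ already separates $v$ from all of $v_1,\ldots,v_k$. At that point no chain-counting is needed: all $k$ edges at $v$ cross $A_{(1)}$, so $k\le\ell$; if $k$ is odd then the degree cut at $v$ forces $k\ge\ell$, hence $k=\ell$, excluded; if $k$ is even then moving $v$ across $A_{(1)}$ yields an odd cut of size $\ell-k\in(0,\ell)$, contradicting $\ell$-odd-connectivity. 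This is exactly the short deduction of Lemma~\ref{lm-split-cyclic} from Lemma~\ref{lm-split-gener} that the paper alludes to. The substantive work is getting a single tight cut with all neighbours on the far side, and that is where your uncrossing scheme, as written, breaks.
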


However, none of these results is sufficient for our purposes
since we need to specify more precisely
which pair of the neighbors of $v$ should be split from $v$.
This is guaranteed by the lemmas we establish in the rest of this
section. Let us remark that Lemma~\ref{lm-split-cyclic} can be obtained
as a consequence of our results.
We start with a modification of the well-known fact that
``minimal odd cuts do not cross'' for the situation where small
even cuts can exist.

\begin{lemma}
\label{lm-uncross}
Let $\GG$ be an $\ell$-odd-connected graph (for some odd integer $\ell$) and
let $E(A_1,A_2)$ and $E(B_1,B_2)$ be two cuts of $\GG$ of size~$\ell$.
Further, let $W_{ij}=A_i\cap B_j$ for $i,j\in\{1,2\}$.
If the sets $W_{ij}$ are non-empty for all $i,j\in\{1,2\}$,
then there exist integers $a$, $b$ and $c$ such that $a+b+c=\ell$ and
one of the following holds:
\begin{itemize}
\item $e(W_{11},W_{12})=e(W_{12},W_{22})=a$,
      $e(W_{11},W_{21})=e(W_{21},W_{22})=b$,
      $e(W_{11},W_{22})=c$ and $e(W_{12},W_{21})=0$, or
\item $e(W_{12},W_{11})=e(W_{11},W_{21})=a$,
      $e(W_{12},W_{22})=e(W_{22},W_{21})=b$,
      $e(W_{12},W_{21})=c$ and $e(W_{11},W_{22})=0$.
\end{itemize}
See Figure~\ref{fig-uncross} for an illustration of the two possibilities.
\end{lemma}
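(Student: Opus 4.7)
The plan is to introduce variables for the six possible edge counts between the four sets $W_{ij}$, translate the given cut identities into linear equations on these variables, and then use a parity analysis based on $\ell$-odd-connectivity to force one of the two ``diagonal'' counts to vanish.

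Concretely, I would write $p=e(W_{11},W_{12})$, $q=e(W_{11},W_{21})$, $r=e(W_{12},W_{22})$, $s=e(W_{21},W_{22})$, $x=e(W_{11},W_{22})$ and $y=e(W_{12},W_{21})$. Since $A_1=W_{11}\cup W_{12}$ and $A_2=W_{21}\cup W_{22}$, counting edges of $E(A_1,A_2)$ gives $q+r+x+y=\ell$; similarly, $E(B_1,B_2)$ yields $p+s+x+y=\ell$. Adding these two identities and subtracting them shows $p+s=q+r=\ell-x-y$. For each $(i,j)$, let $C_{ij}=e(W_{ij},V(\GG)\setminus W_{ij})$; since $W_{ij}\ne\emptyset$, this is an edge-cut of $\GG$. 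A short computation gives
\[
C_{11}+C_{22}=(p+q+x)+(r+s+x)=2(\ell-x-y)+2x=2\ell-2y,
\]
and symmetrically $C_{12}+C_{21}=2\ell-2x$.

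Next I would perform the parity analysis. Since $\ell$ is odd, the sum of degrees in $A_1$ is odd, so $\sigma_{11}+\sigma_{12}\equiv 1\pmod 2$ where $\sigma_{ij}=\sum_{v\in W_{ij}}\deg(v)\bmod 2$; similarly $\sigma_{11}+\sigma_{21}\equiv 1$. Since the total degree sum is even, $\sigma_{22}\equiv\sigma_{11}$. As $C_{ij}\equiv\sigma_{ij}\pmod 2$, there are only two cases: either $C_{11},C_{22}$ are both even and $C_{12},C_{21}$ are both odd, or vice versa. In the first case, the cuts $C_{12}$ and $C_{21}$ are odd, hence by $\ell$-odd-connectivity they each have size at least $\ell$, so $C_{12}+C_{21}\geq 2\ell$. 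Combined with $C_{12}+C_{21}=2\ell-2x$ this forces $x=0$ and $C_{12}=C_{21}=\ell$. Unrolling $C_{12}=p+r+y=\ell$ together with $q+r+x+y=\ell$ yields $p=q$; in the same way $r=s$. Setting $a=p=q$, $b=r=s$ and $c=y$ gives $a+b+c=\ell$ and matches the second alternative of the lemma. The other case is completely symmetric, producing $y=0$, $p=r$, $q=s$ and the first alternative.

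The only real obstacle is the bookkeeping of the six variables and four cuts and making sure the parity argument genuinely uses oddness of $\ell$ in the right place; once this is set up carefully, the proof is a two-line inequality $C_{12}+C_{21}\geq 2\ell$ applied to the identity $C_{12}+C_{21}=2\ell-2x$. I would also note explicitly that the hypothesis that all four $W_{ij}$ are non-empty is what makes each $C_{ij}$ a genuine edge-cut of $\GG$ to which the $\ell$-odd-connectivity assumption applies; without this, the odd corner cuts could be trivially of size $0$ and the argument would collapse.
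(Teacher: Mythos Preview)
Your proof is correct and follows essentially the same approach as the paper: determine by parity which diagonal pair of corner cuts $e(W_{ij},V\setminus W_{ij})$ is odd, apply $\ell$-odd-connectivity to those two cuts, and read off the structure. Your packaging via the sum identity $C_{12}+C_{21}=2\ell-2x$ is a little more streamlined than the paper's version, which instead squeezes one corner cut between $\ell$ and $\ell$ through a short case distinction on whether $e(W_{11},W_{12})\le e(W_{12},W_{22})$, but the underlying idea is identical.
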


\begin{figure}
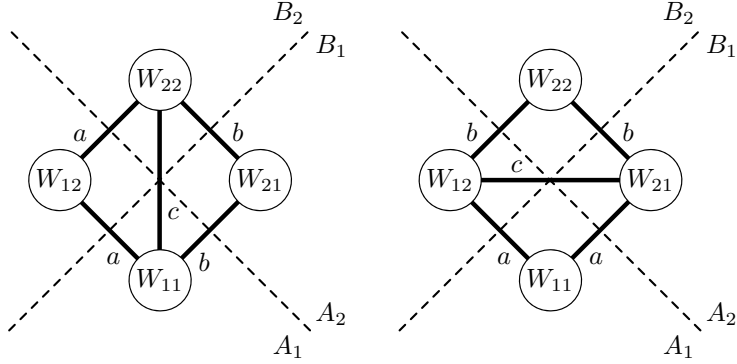

\begin{center}
\epsfbox{scc.28}
\hskip 5mm
\epsfbox{scc.29}
\end{center}
\caption{The two configurations described in the statement of Lemma~\ref{lm-uncross}.}
\label{fig-uncross}
\end{figure}

\begin{proof}
Let $w_{ij}$ be the number of edges with exactly one end-vertex in $W_{ij}$.
Observe that
\begin{equation}
w_{11}+w_{12}=e(A_1,A_2)+2e(W_{11},W_{12})=\ell+2e(W_{11},W_{12})\quad\mbox{and}\label{eq-uncross1}
\end{equation}
$$w_{12}+w_{22}=e(B_1,B_2)+2e(W_{12},W_{22})=\ell+2e(W_{12},W_{22})\;\mbox{.}$$
In particular, one of the numbers $w_{11}$ and $w_{12}$ is even and
the other is odd. Assume that $w_{11}$ is odd. Hence, $w_{22}$ is also odd.
Since $\GG$ is $\ell$-odd-connected, both $w_{11}$ and $w_{22}$
are at least $\ell$.

If $e(W_{11},W_{12})\le e(W_{12},W_{22})$, then 
\begin{eqnarray}
w_{12}& = &e(W_{11},W_{12})+e(W_{12},W_{21})+e(W_{12},W_{22}) \nonumber \\
      & \ge & e(W_{11},W_{12})+e(W_{12},W_{22})
 \ge  2e(W_{11},W_{12})\;\mbox{.}
\label{eq-uncross2}
\end{eqnarray}
The equation (\ref{eq-uncross1}), the inequality (\ref{eq-uncross2}) and
the inequality $w_{11}\ge\ell$ imply that $w_{11}=\ell$.
Hence, the inequality (\ref{eq-uncross2}) is an equality;
in particular, $e(W_{11},W_{12})=e(W_{12},W_{22})$ and
$e(W_{12},W_{21})=0$.
If $e(W_{11},W_{12})\ge e(W_{12},W_{22})$, we obtain the same conclusion.
Since the sizes of the cuts $E(A_1,A_2)$ and $E(B_1,B_2)$ are the same,
it follows that $e(W_{11},W_{21})=e(W_{21},W_{22})$.
We conclude that the graph $\GG$ and the cuts have the structure
as described in the first part of the lemma.

The case that $w_{11}$ is even (and thus $w_{12}$ is odd) leads
to the other configuration described in the statement of the lemma.
\end{proof}

Next, we use Lemma~\ref{lm-uncross} to characterize graphs
where some splittings of neighbors of a given vertex decrease
the odd-connectivity.

\begin{lemma}
\label{lm-split-gener}
Let $\GG$ be an $\ell$-odd-connected graph for an odd integer $\ell\ge 3$,
$v$ a vertex of $\GG$ and $v_1,\ldots,v_k$ some neighbors of $v$.
If every graph $\GG.v_ivv_{i+1}$, $i=1,\ldots,k-1$, contains an edge-cut
of odd size smaller than $\ell$, the vertex set $V(\GG)$ can be partitioned
into two sets $V_1$ and $V_2$ such that $v\in V_1$,
$v_i\in V_2$ for $i=1,\ldots,k$ and
the size of the edge-cut $E(V_1,V_2)$ is $\ell$.
\end{lemma}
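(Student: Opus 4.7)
The strategy proceeds in two stages. First, I would show that for each $i=1,\ldots,k-1$ the hypothesized small odd cut in $\GG.v_ivv_{i+1}$ must ``lift'' to a cut of~$\GG$ of size exactly~$\ell$ having $v$ on one side and both $v_i$ and $v_{i+1}$ on the other. Then I would iteratively uncross these $\ell$-cuts using Lemma~\ref{lm-uncross} to obtain a single $\ell$-cut separating $v$ from all of $v_1,\ldots,v_k$.

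For the lifting step, fix $i$ and let $C_i=E(A,B)$ be an odd edge-cut of $\GG.v_ivv_{i+1}$ of size at most $\ell-2$, writing $u_i$ for the subdivision vertex created by the split. Form a partition $(A',B')$ of $V(\GG)$ by dropping $u_i$ from whichever side it belongs to. Comparing how the two new edges $v_iu_i$, $u_iv_{i+1}$ of the split graph contribute to $|C_i|$ with how the original edges $vv_i$, $vv_{i+1}$ contribute to the cut $E(A',B')$ of $\GG$, a direct case analysis on the placement of $v$, $u_i$, $v_i$, $v_{i+1}$ among $A$ and $B$ shows that $e(A',B')\in\{|C_i|-2,\,|C_i|,\,|C_i|+2\}$ and has the same parity as $|C_i|$. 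Since $\GG$ is $\ell$-odd-connected we must have $e(A',B')\ge\ell$, and combined with $|C_i|\le\ell-2$ this forces $|C_i|=\ell-2$ and leaves only the configuration in which $v$ and $u_i$ lie on opposite sides while $v_i$ and $v_{i+1}$ lie on $u_i$'s side. Thus, after relabelling, $E(A',B')$ is an $\ell$-cut of $\GG$ with $v\in A'$ and $v_i,v_{i+1}\in B'$.

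For the uncrossing step, I would build the desired partition inductively. Assume an $\ell$-cut $E(V_1,V_2)$ of $\GG$ with $v\in V_1$ and $v_1,\ldots,v_{i+1}\in V_2$ is already in hand, and let $E(B_1,B_2)$ be the $\ell$-cut produced by the lifting step for the pair $(v_{i+1},v_{i+2})$, so that $v\in B_1$ and $v_{i+1},v_{i+2}\in B_2$. Put $W_{jk}=V_j\cap B_k$. If some $W_{jk}$ is empty then, by inspecting which $v_j$'s land where, one of the two cuts already separates $v$ from $v_1,\ldots,v_{i+2}$. Otherwise Lemma~\ref{lm-uncross} applies; the key observation is that the edge $vv_{i+1}$ joins $v\in W_{11}$ to $v_{i+1}\in W_{22}$, so $e(W_{11},W_{22})\ge 1$, which rules out the second configuration of Lemma~\ref{lm-uncross}. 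In the first configuration the cut $E(W_{11},V(\GG)\setminus W_{11})$ has size $a+b+c=\ell$ and separates $v$ from $v_1,\ldots,v_{i+2}$, yielding the induction step. Iterating through $i=1,\ldots,k-1$ produces the desired partition of $V(\GG)$.

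The most delicate part is the bookkeeping in the lifting step: one must simultaneously track how the placement of $u_i$ interacts with the two new edges $v_iu_i$, $u_iv_{i+1}$ and with the two old edges $vv_i$, $vv_{i+1}$, and then use $\ell$-odd-connectivity to eliminate every configuration except the single one that forces $|C_i|=\ell-2$ and gives the cut exactly the shape we need. The uncrossing stage is by comparison essentially routine, because the adjacency of $v$ to each $v_j$ automatically rules out the ``bad'' second configuration of Lemma~\ref{lm-uncross}.
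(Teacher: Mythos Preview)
Your proposal is correct and follows essentially the same approach as the paper: a lifting argument showing that a small odd cut in $\GG.v_ivv_{i+1}$ forces an $\ell$-cut of $\GG$ separating $v$ from $\{v_i,v_{i+1}\}$, followed by uncrossing via Lemma~\ref{lm-uncross}, where an edge $vv_j$ crossing from $W_{11}$ to $W_{22}$ rules out the second configuration. The paper organizes the induction slightly differently---it uncrosses two cuts obtained from the inductive hypothesis applied to $\{v_1,\ldots,v_{k-1}\}$ and $\{v_2,\ldots,v_k\}$, rather than iteratively adjoining one $v_{i+2}$ at a time---and it first subdivides all edges to reduce to simple graphs (which sidesteps technicalities when some $v_i$ equals $v$ or when parallel edges are present), but the essential ideas are the same.
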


\begin{proof}
First observe that
it is enough to prove the statement of the lemma for simple graphs;
if $\GG$ is not simple, then subdivide each edge of $\GG$ and apply
the lemma to the resulting graph in the natural way. The assumption that $\GG$
is simple allows us to avoid unpleasant technical complications in the proof.

The proof proceeds by induction on $k$. The base case of the induction
is that $k=2$.
Let $E(V_1,V_2)$ be an edge-cut of $\GG.v_1vv_2$ of odd size less than $\ell$ and
let $v'$ be the new degree-two vertex of $\GG.v_1vv_2$.
By symmetry, we can assume that $v\in V_1$. We can also assume that $v_1$ and $v'$
are in the same set $V_i$; otherwise, moving $v'$ to the set $V_i$ containing $v_1$
either preserves the size of the cut or decreases the size by two.
If both $v_1\in V_1$ and $v_2\in V_1$, then $E(V_1\setminus\{v'\},V_2)$ as an edge-cut
of $\GG$ has the same size as in $\GG.v_1vv_2$ which
contradicts the assumption that $\GG$ is $\ell$-odd-connected.
If $v_1\in V_1$ and $v_2\in V_2$, then $E(V_1\setminus\{v'\},V_2)$ is also an edge-cut
of $\GG$ of the same size as in $\GG.v_1vv_2$ which is again impossible.

Hence, both $v_1$ and $v_2$
must be contained in $V_2$, and the size of the edge-cut $E(V_1,V_2\setminus\{v'\})$
in $\GG$ is larger by two compared to the size of $E(V_1,V_2)$ in $\GG.v_1vv_2$.
Since $\GG$ has no edge-cuts of size~$\ell-2$, the size of the edge-cut
$E(V_1,V_2)$ in $\GG.v_1vv_2$ is $\ell-2$ and the size of $E(V_1,V_2\setminus\{v'\})$
in $\GG$ is $\ell$.
Hence, $V_1$ and $V_2\setminus\{v'\}$ form the partition of the vertices as
in the statement of the lemma.

\begin{figure}
\begin{center}
\epsfbox{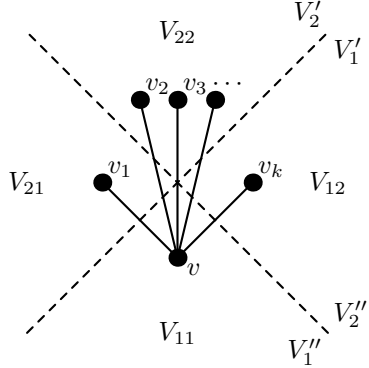}
\end{center}
\caption{Notation used in the proof of Lemma~\ref{lm-split-gener}.}
\label{fig-split-gener}
\end{figure}

We now consider the case that $k>2$. By the induction assumption,
$\GG$ contains a cut $E(V'_1,V'_2)$ of size~$\ell$
such that $v\in V'_1$ and $v_1,\ldots,v_{k-1}\in V'_2$.
Similarly, there is a cut $E(V''_1,V''_2)$ of size~$\ell$ such that $v\in V''_1$ and
$v_2,\ldots,v_k\in V''_2$.
Let $V_{ij}=V'_i\cap V''_j$ for $i,j\in\{1,2\}$ (see
Figure~\ref{fig-split-gener}).
Apply Lemma~\ref{lm-uncross} for the graph $\GG$
with $A_i=V'_i$ and $B_i=V''_i$.
Since $e(V_{11},V_{22})=e(V'_1\cap V''_1,V'_2\cap V''_2)\ge k-2>0$,
the first case described in Lemma~\ref{lm-uncross} applies and
the size of the cut $E(V'_1\cap V''_1,V'_2\cup V''_2)$ is $\ell$.
Hence, the cut $E(V'_1\cap V''_1,V'_2\cup V''_2)$ is a cut of size~$\ell$ in $\GG$.
\end{proof}

We now present a series of lemmas that we need later in the paper.
All these lemmas are simple corollaries of Lemma~\ref{lm-split-gener}.

\begin{lemma}
\label{lm-split4}
Let $\GG$ be a $5$-odd-connected graph, and
let $v$ be a vertex of degree four and $v_1$, $v_2$, $v_3$ and $v_4$
its four neighbors. Then, the graph $\GG.v_1vv_2$ or the graph $\GG.v_2vv_3$
is also $5$-odd-connected graph.
\end{lemma}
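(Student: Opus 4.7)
The plan is to prove the contrapositive by contradiction: suppose neither $\GG.v_1vv_2$ nor $\GG.v_2vv_3$ is $5$-odd-connected, and derive an odd edge-cut in $\GG$ of size less than $5$. Since $\GG$ is $5$-odd-connected, every odd cut in the two split graphs has size at most~$5$ and is odd, so the failure of $5$-odd-connectedness means each of $\GG.v_1vv_2$ and $\GG.v_2vv_3$ contains an odd edge-cut of size~$1$ or~$3$, i.e.\ an odd edge-cut of size strictly smaller than~$5$. This is exactly the hypothesis of Lemma~\ref{lm-split-gener} with $\ell=5$ and $k=3$ applied to the neighbors $v_1,v_2,v_3$ of $v$.

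Applying Lemma~\ref{lm-split-gener}, I obtain a partition $V(\GG)=V_1\cup V_2$ with $v\in V_1$, $v_1,v_2,v_3\in V_2$, and $|E(V_1,V_2)|=5$. The key observation is that $v$ has degree four, so at most~$4$ of the $5$ edges of this cut are incident to $v$. I then split on where the fourth neighbor $v_4$ lies, and in each case move $v$ across the partition to produce a strictly smaller odd cut.

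Concretely, if $v_4\in V_2$, then all four edges incident to $v$ lie in $E(V_1,V_2)$, so exactly one edge of the cut is not incident to $v$. Note that $V_1\neq\{v\}$, since otherwise $|E(V_1,V_2)|$ would equal $\deg(v)=4\neq 5$. Moving $v$ into $V_2$ removes the four edges at $v$ from the cut and leaves the one remaining edge, producing an edge-cut $E(V_1\setminus\{v\},V_2\cup\{v\})$ of size~$1$, contradicting $5$-odd-connectedness. If instead $v_4\in V_1$, then only the three edges $vv_1,vv_2,vv_3$ lie in the cut from $v$'s side, so there are two cut edges not incident to $v$. Moving $v$ into $V_2$ removes those three edges but introduces $vv_4$, yielding a cut $E(V_1\setminus\{v\},V_2\cup\{v\})$ of size $2+1=3$; both sides are non-empty since $v_4\in V_1\setminus\{v\}$ and $v_1\in V_2\cup\{v\}$. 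This again contradicts the hypothesis that $\GG$ is $5$-odd-connected.

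There is no real obstacle here: everything follows once Lemma~\ref{lm-split-gener} is applied with the right three neighbors. The only point to be careful about is the bookkeeping in the two cases, in particular verifying the non-emptiness of both sides after moving $v$, which uses $\deg(v)=4<5$ in the first case and $v_4\in V_1$ in the second.
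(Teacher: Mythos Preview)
Your proof is correct and follows essentially the same approach as the paper: apply Lemma~\ref{lm-split-gener} to obtain a cut of size five separating $v$ from some of its neighbors, then move $v$ across the partition to produce a forbidden small odd cut. The only minor difference is that the paper first observes that $\GG.v_1vv_2$ is $5$-odd-connected if and only if $\GG.v_3vv_4$ is (the two graphs are isomorphic), which lets it apply Lemma~\ref{lm-split-gener} with $k=4$ and thereby avoid your case split on the location of $v_4$.
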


\begin{proof}
Observe that the graph $\GG.v_1vv_2$ is $5$-odd-connected
if and only
if the graph $\GG.v_3vv_4$ is $5$-odd-connected.
Lemma~\ref{lm-split-gener} applied for $\ell=5$, the vertex $v$, $k=4$ and
the vertices $v_1$, $v_2$, $v_3$ and $v_4$
yields that the vertices of $\GG$ can be partitioned into two sets $V_1$ and $V_2$
such that $v\in V_1$, $\{v_1,v_2,v_3,v_4\}\subseteq V_2$ and $e(V_1,V_2)=5$.
Hence,
$$e(V_1\setminus\{v\},V_2\cup\{v\})=e(V_1,V_2)-4=5-4=1\;\mbox{.}$$
This contradicts our assumption that $\GG$ has no edge-cuts of size one.
\end{proof}

\begin{lemma}
\label{lm-split6}
Let $\GG$ be a $5$-odd-connected graph, and
let $v$ be a vertex of degree six and $v_1,\ldots,v_6$
its neighbors. At least one of the graphs $\GG.v_1vv_2$, $\GG.v_2vv_3$ and
$\GG.v_3vv_4$ is also $5$-odd-connected.
\end{lemma}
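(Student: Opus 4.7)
The plan is to mimic the proof of Lemma~\ref{lm-split4}, arguing by contradiction: assume that none of the three graphs $\GG.v_1vv_2$, $\GG.v_2vv_3$, $\GG.v_3vv_4$ is $5$-odd-connected, and derive the existence of an odd edge-cut of size at most $3$ in $\GG$.

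First I would invoke Lemma~\ref{lm-split-gener} with $\ell=5$, the given vertex $v$, $k=4$, and the four consecutive neighbors $v_1,v_2,v_3,v_4$. The hypothesis that all three graphs $\GG.v_ivv_{i+1}$ for $i=1,2,3$ carry an odd edge-cut of size less than $5$ is exactly what is needed, so the lemma yields a partition $V(\GG)=V_1\cup V_2$ with $v\in V_1$, $\{v_1,v_2,v_3,v_4\}\subseteq V_2$, and $e(V_1,V_2)=5$.

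Next I would look at where the remaining two neighbors $v_5$ and $v_6$ lie and consider moving $v$ to the side $V_2$. Write $d$ for the number of edges from $v$ into $V_2$; since $v_1,\dots,v_4\in V_2$, we have $d\in\{4,5,6\}$ depending on how many of $v_5,v_6$ lie in $V_2$. The size of the cut $E(V_1\setminus\{v\},V_2\cup\{v\})$ is $5-d+(6-d)=11-2d$. If $d=6$ this would already be negative, so that case is impossible given $e(V_1,V_2)=5$. If $d=5$ (i.e.\ exactly one of $v_5,v_6$ lies in $V_2$) the new cut has size $1$; if $d=4$ (i.e.\ both $v_5,v_6$ lie in $V_1$) the new cut has size $3$. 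In either remaining case, $\GG$ would contain an odd edge-cut of size smaller than $5$, contradicting $5$-odd-connectivity.

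No step should be really hard: once Lemma~\ref{lm-split-gener} is available, the only task is the parity/size bookkeeping around the vertex $v$, whose degree $6$ is small enough that the three cases can be enumerated explicitly. The mild point of care is to remember that the two extra neighbors $v_5,v_6$ might or might not lie on the same side of the partition, which is why the three subcases on $d$ appear; beyond that the proof is a direct analogue of Lemma~\ref{lm-split4}.
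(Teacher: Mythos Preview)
Your proposal is correct and follows essentially the same argument as the paper: apply Lemma~\ref{lm-split-gener} with $\ell=5$, $k=4$ and the neighbors $v_1,\dots,v_4$, then move $v$ across the resulting cut and count. The paper uses a variable $\sigma$ where you use $d$, and it simply states the new cut size is $11-2\sigma\in\{1,3\}$; your explicit exclusion of the case $d=6$ (via $e(V_1,V_2)=5<6$) is a small extra detail but the proofs are otherwise identical.
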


\begin{proof}
Lemma~\ref{lm-split-gener} applied for $\ell=5$, the vertex $v$, $k=4$ and $v_1$, $v_2$, $v_3$ and $v_4$
yields that the vertices of $\GG$ can be partitioned into two sets $V_1$ and $V_2$
such that $v\in V_1$, $\{v_1,v_2,v_3,v_4\}\subseteq V_2$ and $e(V_1,V_2)=5$.
If $V_2$ contains $\sigma$ neighbors of $v$ (note that $\sigma\ge 4$), then
$$e(V_1\setminus\{v\},V_2\cup\{v\})=e(V_1,V_2)-\sigma+(6-\sigma)=11-2\sigma$$
which is equal to $1$ or $3$ contradicting the fact that $\GG$ is
$5$-odd-connected.
\end{proof}

\begin{lemma}
\label{lm-split8}
Let $\GG$ be a $5$-odd-connected graph, and
let $v$ be a vertex of degree $d\ge 6$ and $v_1,\ldots,v_d$ its neighbors.
At least one of the graphs $\GG.v_ivv_{i+1}$, $i=1,\ldots,5$,
is also $5$-odd-connected.
\end{lemma}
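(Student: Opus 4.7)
The plan is to argue by contradiction. Assume that none of the five graphs $\GG.v_ivv_{i+1}$ for $i=1,\ldots,5$ is $5$-odd-connected; each of them then contains an edge-cut of odd size smaller than $5$. This is precisely the hypothesis needed to apply Lemma~\ref{lm-split-gener} with $\ell=5$, $k=6$, the vertex $v$, and the sequence of neighbors $v_1,\ldots,v_6$. The conclusion produces a partition $V(\GG)=V_1\cup V_2$ with $v\in V_1$, $\{v_1,\ldots,v_6\}\subseteq V_2$, and $e(V_1,V_2)=5$.

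The contradiction should then come from a direct edge count, with no parity analysis needed: since $v\in V_1$ and all six neighbors $v_1,\ldots,v_6$ lie in $V_2$, the six (distinct) edges $vv_1,\ldots,vv_6$ are all contained in the cut $E(V_1,V_2)$. Hence $e(V_1,V_2)\ge 6$, which contradicts the fact that this cut has size exactly $5$.

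The only technical point is to make sure that the edges $vv_1,\ldots,vv_6$ are counted as six distinct edges even if $\GG$ is a multigraph; this is automatic if one first reduces to the simple case by subdividing every edge of $\GG$, exactly as is done at the beginning of the proof of Lemma~\ref{lm-split-gener}. I expect this to be the easiest of the three split-off lemmas: the harder case analyses in Lemmas~\ref{lm-split4} and~\ref{lm-split6}, where one must track how many of the remaining neighbors of $v$ sit in $V_2$ and argue via the parity of the resulting cut, are avoided here because applying Lemma~\ref{lm-split-gener} with all six relevant neighbors forces the number of edges from $v$ into $V_2$ to exceed the guaranteed cut size on its own.
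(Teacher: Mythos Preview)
Your proposal is correct and follows exactly the same approach as the paper: both apply Lemma~\ref{lm-split-gener} with $\ell=5$, $k=6$, and the neighbors $v_1,\ldots,v_6$, and observe that the resulting partition is impossible because the six edges $vv_1,\ldots,vv_6$ already force $e(V_1,V_2)\ge 6>5$. The paper states this as a single sentence (noting that no such partition exists and hence the lemma follows), while you phrase it as a contradiction, but the content is identical.
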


\begin{proof}
Since there is no partition of the vertices of $\GG$ into two parts $V_1$ and $V_2$
such that $v\in V_1$, $v_i\in V_2$ for $i=1,\ldots,6$, and $e(V_1,V_2)=5$,
Lemma~\ref{lm-split-gener} applied for $\ell=5$, the vertex $v$, $k=6$ and the vertices $v_i$, $i=1,\ldots,6$
yields the statement of the lemma.
\end{proof}

We will also need the following corollary.

\begin{lemma}
\label{lm-split8spc}
Let $\GG$ be a $5$-odd-connected graph, and
let $v$ be a vertex of degree eight and $v_1,\ldots,v_8$
its neighbors. Suppose that $\GG.v_1vv_2$ is $5$-odd-connected.
At least one of the following graphs is also $5$-odd-connected:
$\GG.v_1vv_2.v_3vv_4$, $\GG.v_1vv_2.v_7vv_8$,
$\GG.v_1vv_2.v_3vv_8.v_4vv_5$ and
$\GG.v_1vv_2.v_3vv_8.v_4vv_6$.
\end{lemma}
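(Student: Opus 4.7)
I argue by contradiction: assume that none of the four graphs in the statement is $5$-odd-connected. Write $\GG':=\GG.v_1vv_2$; by hypothesis $\GG'$ is $5$-odd-connected and $v$ has degree six in $\GG'$ with neighbors $v_3,\ldots,v_8$. The $k=2$ analysis inside the proof of Lemma~\ref{lm-split-gener}, applied to the failures of $\GG'.v_3vv_4$ and of $\GG'.v_7vv_8$, produces two cuts of size $5$ in $\GG'$: a cut $E(A_1,A_2)$ with $v\in A_1$ and $v_3,v_4\in A_2$, and a cut $E(B_1,B_2)$ with $v\in B_1$ and $v_7,v_8\in B_2$. I then split on whether $\GG'.v_3vv_8$ is itself $5$-odd-connected.

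If it is, I apply Lemma~\ref{lm-split-gener} inside $\GG'.v_3vv_8$ with $\ell=5$, $k=3$, and the sequence of neighbors $v_5,v_4,v_6$: its two consecutive splittings coincide with the third and fourth failing graphs, so the lemma yields a cut of size $5$ in $\GG'.v_3vv_8$ separating $v$ from $\{v_4,v_5,v_6\}$. Placing the newly created degree-two vertex on $v$'s side lifts it to a cut $E(C_1,C_2)$ of size $5$ in $\GG'$ with $v\in C_1$ and $v_4,v_5,v_6\in C_2$. I examine the four cells $W_{ij}=A_i\cap C_j$. If $A_1\cap C_2$ (symmetrically $A_2\cap C_1$) is empty, the resulting inclusion forces at least four neighbors of $v$ onto one side of the $A$-cut (respectively the $C$-cut), and moving $v$ across that cut yields an odd cut of size at most $3$. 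Otherwise Lemma~\ref{lm-uncross} applies: Config~2 is killed immediately by the edge $vv_4$ joining $W_{11}$ and $W_{22}$, so Config~1 holds and $E(W_{11},\mathrm{rest})=5$. Since $v_3,v_4\in A_2$ and $v_5,v_6\in C_2$, at most two of $v$'s neighbors (lying in $\{v_7,v_8\}$) can belong to $W_{11}$; moving $v$ yields the odd cut $2\cdot 2-1=3$, contradicting $5$-odd-connectivity of $\GG'$.

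If $\GG'.v_3vv_8$ is not $5$-odd-connected, the same base case gives a cut $E(E_1,E_2)$ of size $5$ in $\GG'$ with $v\in E_1$ and $v_3,v_8\in E_2$. I rerun the same Lemma~\ref{lm-uncross}/move-$v$ machinery on the pairs $(A,E)$ and $(B,E)$: Config~2 is killed by $vv_3$ and $vv_8$ respectively, and the degenerate empty-cell scenarios all reduce (as in the previous case) to a situation with $\ge 4$ neighbors of $v$ on one side. The non-degenerate outcome forces $v_5,v_6,v_7\in A_1\cap E_1$ and $v_4,v_5,v_6\in B_1\cap E_1$, so $v$ has exactly two neighbors in $A_1\cap B_1$, namely $v_5$ and $v_6$. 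A third application of the machinery to $(A,B)$ finishes: if a neighbor of $v$ (necessarily $v_3$ or $v_8$) lies in $A_2\cap B_2$, Config~2 is killed, Config~1 gives $E(A_1\cap B_1,\mathrm{rest})=5$, and moving $v$ produces the desired $3$-cut; otherwise $v_3\in B_1$ and $v_8\in A_1$, and the sizes of the $A$-, $B$- and $E$-cuts, together with the previously located positions of $v_4,\ldots,v_7$, are combined to pin down $v_5$ and $v_6$ tightly enough to exhibit directly a cut of $\GG'$ of odd size at most $3$.

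The main obstacle is the second case, where three applications of Lemma~\ref{lm-uncross} must be chained while keeping careful track of which neighbors of $v$ have been forced into which cells. The terminal sub-case $A_2\cap B_2$-contains-no-neighbor-of-$v$ is the one place in the proof where the auxiliary $E$-cut is used in a genuinely essential way; in every other step a single application of Lemma~\ref{lm-uncross} followed by a one-line move-$v$ count does the job.
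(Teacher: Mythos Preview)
Your overall strategy can be made to work, but the paper's route is far shorter: in $\GG'=\GG.v_1vv_2$ the vertex $v$ has degree six, so Lemma~\ref{lm-split6} applied with the neighbour ordering $v_4,v_3,v_8,v_7,\dots$ says that one of $\GG'.v_3vv_4$, $\GG'.v_3vv_8$, $\GG'.v_7vv_8$ is $5$-odd-connected. If it is the first or third, we are done; if it is the middle one, then $v$ has degree four there with neighbours $v_4,v_5,v_6,v_7$, and Lemma~\ref{lm-split4} applied with the ordering $v_5,v_4,v_6,v_7$ gives that $\GG'.v_3vv_8.v_4vv_5$ or $\GG'.v_3vv_8.v_4vv_6$ is $5$-odd-connected. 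That is the entire proof. You are in effect re-deriving Lemmas~\ref{lm-split4} and~\ref{lm-split6} from Lemma~\ref{lm-uncross} instead of citing them.

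There are also two places where your write-up is not correct as stated. First, in your Case~2 the claim that the degenerate empty-cell scenarios for $(A,E)$ and $(B,E)$ force ``$\ge 4$ neighbours of $v$ on one side'' is false: for $(A,E)$ the union $\{v_3,v_4\}\cup\{v_3,v_8\}$ has only three elements, so with exactly $v_5,v_6,v_7$ on the other side there is no contradiction from moving $v$. What you actually get (and what you need) is the weaker conclusion $v_5,v_6,v_7\in A_1\cap E_1$, and similarly $v_4,v_5,v_6\in B_1\cap E_1$; this also holds in the degenerate cases but for a different reason than you give. Second, your terminal sub-case (``$v_3\in B_1$ and $v_8\in A_1$'') is not finished. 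The three original cuts $A,B,E$ each see exactly four neighbours of $v$ on the $v$-side, so moving $v$ across any of them yields a cut of size $5$, not $\le 3$; and for the pair $(A,B)$ you cannot kill Config~2 of Lemma~\ref{lm-uncross} since no neighbour of $v$ lies in $A_2\cap B_2$. One way to close the gap is a \emph{fourth} uncrossing: apply Lemma~\ref{lm-uncross} to the two derived $5$-cuts $E(A_1\cap E_1,\cdot)$ and $E(B_1\cap E_1,\cdot)$; their $11$-cell $A_1\cap B_1\cap E_1$ contains $v$ and only the neighbours $v_5,v_6$, the $22$-cell contains $v_3$ and $v_8$, so Config~2 is excluded and moving $v$ across the resulting $5$-cut gives size $3$. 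This works, but it is considerably more than ``exhibit directly a cut'', and all of it is avoided by the two-line argument above.
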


\begin{proof}
The degree of the vertex $v$ in $\GG.v_1vv_2$ is six.
By Lemma~\ref{lm-split6}, at least one of the graphs
$\GG.v_1vv_2.v_3vv_4$, $\GG.v_1vv_2.v_3vv_8$ and
$\GG.v_1vv_2.v_7vv_8$ is $5$-odd-connected.

If $\GG.v_1vv_2.v_3vv_8$ is $5$-odd-connected,
we apply Lemma~\ref{lm-split4} for the vertex $v$ and
its neighbors $v_5$, $v_4$, $v_6$ and $v_7$ (in this order).
Hence, the graph $\GG.v_1vv_2.v_3vv_8.v_4vv_5$ (which is isomorphic
to $\GG.v_1vv_2.v_3vv_8.v_6vv_7$) or
the graph $\GG.v_1vv_2.v_3vv_8.v_4vv_6$ is $5$-odd-connected.
\end{proof}

We need one more vertex operation in our arguments---vertex expansion.
If $\GG$ is a graph, $v$ a vertex of $\GG$ and $V_1$ a subset of its neighbors,
then the graph $\GG:v:V_1$
is the graph obtained from $\GG$ by removing the vertex $v$ and introducing
two new vertices $v_1$ and $v_2$, joining $v_1$ to the vertices of $V_1$,
$v_2$ to the neighbors of $v$ not contained in $V_1$, and
adding an edge $v_1v_2$.
We say that $\GG:v:V_1$ is obtained by {\em expanding the vertex $v$
with respect to the set $V_1$}. See Figure~\ref{fig-expand}
for an example. Let us remark that this operation will be applied only
to vertices $v$ incident with no parallel edges.

\begin{figure}
\begin{center}
\epsfbox{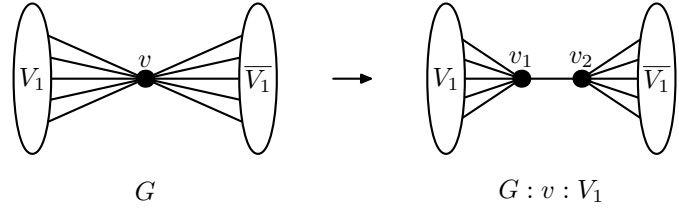}
\end{center}
\caption{An example of the expansion a vertex $v$ with respect to set $V_1$.}
\label{fig-expand}
\end{figure}

The following auxiliary lemma
directly follows from results of Fleischner~\cite{bib-fleischner76}:

\begin{lemma}
\label{lm-expand}
Let $\GG$ be a bridgeless graph and $v$ a vertex of degree four in $\GG$
incident with no parallel edges.
Further, let $v_1$, $v_2$, $v_3$ and $v_4$ be the four neighbors of $v$.
The graph $\GG:v:\{v_1,v_2\}$ or the graph $\GG:v:\{v_2,v_3\}$ is also
bridgeless.
\end{lemma}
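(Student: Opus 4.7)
The plan is to analyze when the graph $\GG:v:V_1$ can fail to be bridgeless for $|V_1|=2$, and to show that the two proposed choices $V_1=\{v_1,v_2\}$ and $V_1=\{v_2,v_3\}$ cannot both fail.

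First I would argue that every bridge of $\GG:v:V_1$ must be the new edge $v_1'v_2'$. Indeed, contracting $v_1'v_2'$ in $\GG:v:V_1$ recovers $\GG$, so any bridge of $\GG:v:V_1$ not incident with $v_1'$ or $v_2'$ would survive the contraction as a bridge of $\GG$, which is impossible. For an inherited edge of the form $v_1'v_i$ (coming from $vv_i$ in $\GG$), since $vv_i$ is not a bridge of $\GG$ there is a path in $\GG\setminus v$ from $v_i$ to some other neighbor $v_j$ of $v$; this path survives in $\GG:v:V_1\setminus v_1'v_i$ and ends at $v_j$, which is joined to $v_1'$ either directly (when $v_j\in V_1$) or through $v_2'$ and the new edge $v_1'v_2'$ (when $v_j\notin V_1$). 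Hence $v_1'v_i$ is not a bridge.

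Next I would characterize when $v_1'v_2'$ itself is a bridge: this happens precisely when $\GG\setminus v$ has no path between $V_1$ and the complementary two neighbors, which is equivalent to the two edges $\{vv_i:v_i\notin V_1\}$ forming an edge-cut of size $2$ in $\GG$. Hence the lemma reduces to showing that $\{vv_3,vv_4\}$ and $\{vv_1,vv_4\}$ cannot both be edge-cuts of $\GG$ of size $2$.

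Suppose they are, with partitions $(A_1,B_1)$ and $(A_2,B_2)$ satisfying $\{v,v_1,v_2\}\subseteq A_1$, $\{v_3,v_4\}\subseteq B_1$, $\{v,v_2,v_3\}\subseteq A_2$, $\{v_1,v_4\}\subseteq B_2$. Submodularity of the cut function $d(S)=e(S,V(\GG)\setminus S)$ gives
\[
 d(A_1)+d(A_2)\;\ge\;d(A_1\cap A_2)+d(A_1\cup A_2).
\]
The left-hand side equals $4$. Since $v\in A_1\cap A_2$ has three of its four neighbors ($v_1,v_3,v_4$) outside $A_1\cap A_2$, we obtain $d(A_1\cap A_2)\ge 3$. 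Since $v_4\in B_1\cap B_2$ lies outside $A_1\cup A_2$ while $\GG$ is bridgeless, $d(A_1\cup A_2)\ge 2$. The right-hand side is therefore at least $5$, a contradiction. The main obstacle is the first step---verifying that inherited edges cannot become bridges---since this relies on the specific contraction relationship between $\GG:v:V_1$ and $\GG$; once it is in place, the remainder is a short submodularity computation.
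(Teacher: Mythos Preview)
Your argument is correct. The reduction to the claim that $\{vv_3,vv_4\}$ and $\{vv_1,vv_4\}$ cannot both be $2$-edge-cuts is sound, and the submodularity computation dispatches it cleanly. One small point worth making explicit in the final step: the bound $d(A_1\cup A_2)\ge 2$ uses not only bridgelessness but also that the edge $vv_4$ crosses the cut $(A_1\cup A_2,\,B_1\cap B_2)$, so this cut has size at least~$1$ and hence, being nontrivial in a bridgeless graph, at least~$2$. You essentially say this, but it deserves a word since bridgeless graphs in this paper may be disconnected.

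As for the comparison: the paper does not give a proof at all---it simply records that Lemma~\ref{lm-expand} ``directly follows from results of Fleischner~\cite{bib-fleischner76}'', i.e., from the classical splitting-off theory. Your approach is genuinely different and more self-contained: rather than invoking a general splitting lemma, you identify the only possible bridge in $\GG:v:V_1$ as the new edge, translate its being a bridge into the existence of a specific $2$-edge-cut in $\GG$, and then use submodularity of the cut function to rule out the simultaneous existence of the two relevant $2$-cuts. This buys an elementary proof that does not depend on the Fleischner machinery; the price is a short case analysis for the ``inherited edges'' step, which you handle correctly.
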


\subsection{Special types of $\ZZ_2^2$-flows}
\label{sub-flows}

We start with recalling a classical result of Jaeger
on the existence of nowhere-zero $\ZZ_2^2$-flows.

\begin{theorem}[Jaeger~\cite{bib-jaeger79}]
\label{thm-4flow}
If $\GG$ is a graph that contains no edge-cuts of size one or three,
then $\GG$ has a nowhere-zero $\ZZ_2^2$-flow.
\end{theorem}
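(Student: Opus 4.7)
The plan is to follow Jaeger's classical two-step argument: first reduce to the $4$-edge-connected case, then apply the Nash--Williams/Tutte theorem on edge-disjoint spanning trees.

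For the reduction, I would induct on the number of edges and use $2$-edge-cuts to decompose the graph. If $E(A,B)=\{e_1,e_2\}$ is a $2$-edge-cut of $\GG$, then any nowhere-zero $\ZZ_2^2$-flow must satisfy $f(e_1)=f(e_2)$, since the two values sum to zero across the cut in characteristic two. Exploiting this, one forms two smaller graphs $\GG_A$ and $\GG_B$ by taking the two sides and joining the respective endpoints of $e_1,e_2$ by a single new edge; after a parity check on cuts, each $\GG_X$ can be arranged to again satisfy the hypothesis on cuts of sizes $1$ and $3$. Applying induction yields nowhere-zero $\ZZ_2^2$-flows on $\GG_A$ and $\GG_B$, and after permuting values by an automorphism of $\ZZ_2^2$ if necessary, these agree on the two new edges and glue to a nowhere-zero $\ZZ_2^2$-flow on $\GG$. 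Iterating, we may assume $\GG$ has no cut of size $1$, $2$, or $3$, i.e., $\GG$ is $4$-edge-connected.

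For the $4$-edge-connected case, the Nash--Williams/Tutte theorem supplies two edge-disjoint spanning trees $T_1,T_2$; write $R=E(\GG)\setminus(T_1\cup T_2)$. I would build the flow as a pair $(\phi_1,\phi_2)$ of $\ZZ_2$-flows (equivalently, even subgraphs) whose supports jointly cover $E(\GG)$. Define
$$\phi_1 \;=\; \sum_{e\in T_1\cup R} C(e,T_2) \quad\text{and}\quad \phi_2 \;=\; \sum_{e\in T_2\cup R} C(e,T_1),$$
where $C(e,T)$ denotes the fundamental cycle of the non-tree edge $e$ with respect to the spanning tree $T$ and the sum is symmetric difference. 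Each $\phi_i$ is a sum of cycles, hence an even subgraph. Since each $e\in T_1\cup R$ appears in exactly one summand defining $\phi_1$ (namely $C(e,T_2)$), the support of $\phi_1$ contains $T_1\cup R$; symmetrically, the support of $\phi_2$ contains $T_2\cup R$. As $E(\GG)=T_1\cup T_2\cup R$, every edge lies in the support of at least one of $\phi_1,\phi_2$, i.e.\ the pair gives a nowhere-zero $\ZZ_2^2$-flow.

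The main obstacle I anticipate is the reduction step: verifying carefully that $\GG_A$ and $\GG_B$ still have no edge-cut of size $1$ or $3$ requires a delicate parity analysis of how cuts in the auxiliary graphs correspond to cuts in $\GG$, and the paper's convention of allowing loops and parallel edges adds several degenerate subcases (e.g.\ the new edge becoming a loop when the two $A$-endpoints coincide). The Nash--Williams/Tutte invocation and the fundamental-cycle construction, by contrast, are essentially standard once this setup is in place.
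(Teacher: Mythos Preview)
The paper does not supply a proof of this theorem: it is simply recalled as a classical result of Jaeger and then used as a black box in Lemmas~\ref{lm-flow} and~\ref{lm-rainbow-cubic}. There is therefore no ``paper's proof'' to compare against.

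Your outline is precisely Jaeger's original argument and is correct. The cut-parity check you flag as the main obstacle goes through cleanly: if $E(A_1,A_2)$ is a $k$-cut of $\GG_A$, then one of $E(A_1,A_2\cup B)$ or $E(A_2,A_1\cup B)$ is a $k$-cut of $\GG$ (take whichever $A_i$ does \emph{not} contain both endpoints $a_1,a_2$ of the new edge; if $a_1,a_2$ lie on different sides, either choice works). Hence $\GG_A$ and $\GG_B$ inherit the no-$1$-cut/no-$3$-cut hypothesis, and since each has strictly fewer edges than $\GG$ the induction terminates. The loop case ($a_1=a_2$) is in fact easier, not harder, since a loop contributes zero to every vertex constraint. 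The Nash--Williams/Tutte step and the fundamental-cycle construction are standard, and your observation that each non-$T_2$ edge $e$ lies in exactly one summand $C(e,T_2)$ is exactly what forces $E(\GG)\setminus T_2\subseteq\mathrm{supp}(\phi_1)$.
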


In our arguments, we will need to construct $\ZZ_2^2$-flows that
have some additional properties. Some of these properties will be
guaranteed by vertex splittings introduced earlier.
In this section, we establish an auxiliary lemma that guarantees
the existence of a nowhere-zero $\ZZ_2^2$-flow which we were not
able to prove with vertex splittings only. We use the following
notation in Lemma~\ref{lm-flow}: if $\varphi$ is a flow
on a graph $\GG$ and $E$ is a subset of the edges of $G$, then
$\varphi(E)$ is the set of the values of $\varphi$ on $E$.

\begin{lemma}
\label{lm-flow}
Let $\GG$ be a bridgeless graph admitting a nowhere-zero $\ZZ_2^2$-flow.
Assume that 
\begin{itemize}
\item for every vertex $v$ of degree five, there are given two multisets $A_v$ and
      $B_v$ of three edges incident with $v$ such that $|A_v\cap B_v|=2$ (a loop
      incident with $v$ can appear twice in the same set), and
\item for every vertex $v$ of degree six, the incident edges
      are partitioned into three multisets $A_v$, $B_v$ and $C_v$ of size
      two each (every loop incident with $v$ appears twice in the sets $A_v$, $B_v$ and $C_v$,
      possibly in the same set).
\end{itemize}
The graph $\GG$ has a nowhere-zero $\ZZ_2^2$-flow $\varphi$ such that
\begin{itemize}
\item for every vertex $v$ of degree five,
      $|\varphi(A_v)|\ge 2$ and $|\varphi(B_v)|\ge 2$, and
\item for every vertex $v$ of degree six,
      $|\varphi(A_v\cup B_v\cup C_v)|=3$, i.e.,
      the edges incident with $v$ have all the three possible flow values, or
      $|\varphi(A_v)|=1$, or $|\varphi(B_v)|=1$, or $|\varphi(C_v)|=2$.
\end{itemize}
\end{lemma}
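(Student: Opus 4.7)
The plan is to reduce the statement to Jaeger's theorem (Theorem~\ref{thm-4flow}) by constructing an auxiliary graph $G'$ whose every nowhere-zero $\ZZ_2^2$-flow restricts to a flow on $G$ that satisfies the required local conditions. The graph $G'$ will be obtained from $G$ by a sequence of local modifications at the degree-five and degree-six vertices; each modification forces equalities (or distinctness) between flow values on prescribed pairs of edges at the affected vertex, and these force the conclusions we want.

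For a degree-five vertex $v$, we write $A_v=\{e,f,g_A\}$ and $B_v=\{e,f,g_B\}$, so that $g_A$ and $g_B$ are the two edges distinguishing $A_v$ from $B_v$. A parity argument shows that the multiset of flow values at a degree-five vertex of a nowhere-zero $\ZZ_2^2$-flow is always of type $(3,1,1)$; hence $|\varphi(A_v)|=1$ occurs precisely when the ``triple'' equals $A_v$, and analogously for $B_v$. Splitting the pair $g_A,g_B$ off from $v$ forces $\varphi(g_A)=\varphi(g_B)$ in any flow, which rules out both forbidden configurations and yields $|\varphi(A_v)|\ge 2$ and $|\varphi(B_v)|\ge 2$. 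At a degree-six vertex $v$ we have three independent operations, any one of which suffices: splitting $A_v$ off from $v$ forces $|\varphi(A_v)|=1$; splitting $B_v$ off from $v$ forces $|\varphi(B_v)|=1$; and expanding $v$ into two new vertices $v_1,v_2$ joined by an edge so that $v_1$ carries the two edges of $C_v$ and $v_2$ carries $A_v\cup B_v$ forces $|\varphi(C_v)|=2$, since at the new degree-three vertex $v_1$ the three nonzero flow values must sum to zero and hence be pairwise distinct. Having three options at each degree-six vertex is crucial for the next step.

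The main obstacle is to show that a consistent choice of operations can be made across all constrained vertices so that $G'$ remains free of edge-cuts of size one and three, ensuring that Theorem~\ref{thm-4flow} applies. I intend to carry this out by induction on the number of modified vertices, at each step invoking the splitting machinery of Section~\ref{sub-split}---particularly Lemma~\ref{lm-uncross} together with Lemma~\ref{lm-split-gener} and its corollaries---to verify that one of the admissible local operations preserves the cut condition. The hardest case is a degree-six vertex at which all three candidate operations individually appear to create a small odd cut; there I plan to apply Lemma~\ref{lm-uncross} to the putative minimal odd cuts arising from two different candidate operations to deduce a structural obstruction in $G$ itself, then rule this obstruction out using the hypothesis that $G$ admits a nowhere-zero $\ZZ_2^2$-flow. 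This case analysis is the technical heart of the argument; the remaining work is routine verification that the local equalities produced by the chosen operations translate back, in the original graph $G$, into the flow inequalities demanded by the lemma.
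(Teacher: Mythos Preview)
Your reduction to Theorem~\ref{thm-4flow} has two genuine gaps.

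First, the hypothesis of the lemma is only that $G$ is bridgeless and \emph{admits} a nowhere-zero $\ZZ_2^2$-flow; it does not assert that $G$ is free of edge-cuts of size three. Any $3$-edge-colourable cubic graph, for instance $K_4$, satisfies the hypothesis yet has many $3$-cuts. Splittings and expansions do not destroy existing cuts away from the modified vertex, so a $3$-cut of $G$ persists in every $G'$ you could build, and Theorem~\ref{thm-4flow} never applies. Replacing the appeal to Jaeger by ``$G'$ still admits a nowhere-zero $\ZZ_2^2$-flow'' does not help either: a nowhere-zero flow on $G'$ is exactly a nowhere-zero flow on $G$ satisfying the prescribed equalities, which is what you are trying to establish.

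Second, even under the stronger hypothesis that $G$ has no $3$-cuts (which does hold in the one application of the lemma, where the graph is $5$-odd-connected), your degree-five operation \emph{always} produces a $3$-cut: after you split the single prescribed pair $\{g_A,g_B\}$ off from $v$, the vertex $v$ has degree three, and the three edges still incident with it form a $3$-cut of $G'$. There is no alternative pair to try, so the splitting lemmas of Section~\ref{sub-split} give you no room to manoeuvre. Your degree-six expansion option has the same defect, since the new vertex $v_1$ carrying $C_v$ has degree three; thus at a degree-six vertex you really have only two usable options, not three, and the uncrossing plan you sketch for the ``all three fail'' case does not address the right obstruction.

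The paper proceeds entirely differently. It fixes a nowhere-zero $\ZZ_2^2$-flow with the fewest ``bad'' vertices (those failing the stated local conditions) and shows that any bad vertex can be repaired by exchanging two of the three flow values along a closed trail in those two colours, starting and ending at the bad vertex. The heart of the argument is a case-by-case specification of how the trail is routed through each good degree-five and degree-six vertex it meets, designed so that after the swap every such vertex stays good; this contradicts minimality. The proof works directly on the flow and uses the hypothesis only to obtain an initial nowhere-zero flow.
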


\begin{proof}
For simplicity, we refer to edges with the flow value $01$ red, $10$ green and
$11$ blue. Note that each vertex of odd degree is incident with odd numbers of red,
green and blue edges and each vertex of even degree is incident with even numbers
of red, green and blue edges (counting loops twice).
We say that a vertex $v$ of degree five is {\em bad}
if $|\varphi(A_v)|=1$ or $|\varphi(B_v)|=1$,
and it is {\em good}, otherwise. Similarly, a vertex $v$ of
degree six is {\em bad} if $\varphi$ has only two possible flow values at $v$,
$|\varphi(A_v)|=2$, $|\varphi(B_v)|=2$ and $|\varphi(C_v)|=1$;
otherwise, $v$ is good. Choose a $\ZZ_2^2$-flow
$\varphi$ of $G$ with the least number of bad vertices. If there are no bad
vertices, then there is nothing to prove. Assume that there is a bad vertex $v$.

Let us first analyze the case that the degree of $v$ is five. Let $e_1,\ldots,e_5$
be the edges incident with $v$. By symmetry, we can assume that $A_v=\{e_1,e_2,e_3\}$,
$B_v=\{e_2,e_3,e_4\}$, the edges $e_1$, $e_2$ and $e_3$ are red, the edge $e_4$
is green and the edge $e_5$ is blue (see Figure~\ref{fig-flow-bad}). 
We now define a closed trail $W$ in $G$ formed
by red and blue edges. The first edge of $W$ is $e_1$.

\begin{figure}
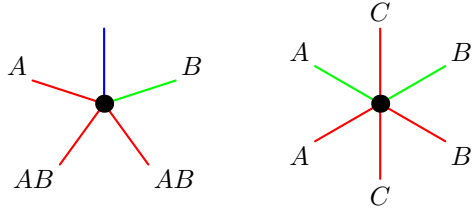

\begin{center}
\epsfbox{scc2.61}
\hskip 5mm
\epsfbox{scc2.62}
\end{center}
\caption{Bad vertices of degree five and six (symmetric cases are omitted). The letters indicate edges contained in the sets $A$, $B$ and $C$.}
\label{fig-flow-bad}
\end{figure}

Let $f=ww'$ be the last edge of $W$ defined so far.
If $w'=v$, then $f$ is one of the edges $e_2$, $e_3$ and
$e_5$ and the definition of $W$ is finished. Assume that $w'\not=v$.
If $w'$ is not a vertex of degree five or six or $w'$ is a bad vertex,
add to the trail $W$ any red or blue edge incident with $w'$ that is not already
contained in $W$.

If $w'$ is a good vertex of degree five, let $f_1,\ldots,f_5$
be the edges incident with $w'$, $A_{w'}=\{f_1,f_2,f_3\}$ and $B_{w'}=\{f_2,f_3,f_4\}$.
If $w'$ is incident with a single red and a single blue edge, leave $w'$
through the other edge that is red or blue. Otherwise, there are three red edges and
one blue edge or vice versa. The next edge $f'$ of the trail $W$ is determined
as follows (note that the role of red and blue can be swapped):
\begin{center}
\begin{tabular}{|c|c|ccccc|}
\hline
Red edges & Blue edge & $f=f_1$ & $f=f_2$ & $f=f_3$ & $f=f_4$ & $f=f_5$ \\
\hline
$f_1$, $f_2$, $f_4$ & $f_3$ & $f'=f_4$ & $f'=f_3$ & $f'=f_2$ & $f'=f_1$ &   N/A \\
$f_1$, $f_2$, $f_4$ & $f_5$ & $f'=f_2$ & $f'=f_1$ &   N/A    & $f'=f_5$ & $f'=f_4$ \\
$f_1$, $f_2$, $f_5$ & $f_3$ & $f'=f_5$ & $f'=f_3$ & $f'=f_2$ &   N/A    & $f'=f_1$ \\
$f_1$, $f_2$, $f_5$ & $f_4$ & $f'=f_2$ & $f'=f_1$ &   N/A    & $f'=f_5$ & $f'=f_4$ \\
$f_1$, $f_4$, $f_5$ & $f_2$ & $f'=f_2$ & $f'=f_1$ &   N/A    & $f'=f_5$ & $f'=f_4$ \\
$f_2$, $f_3$, $f_5$ & $f_1$ & $f'=f_2$ & $f'=f_1$ & $f'=f_5$ &   N/A    & $f'=f_3$ \\
\hline
\end{tabular}
\end{center}
See Figure~\ref{fig-flow-5good} for an illustration of these rules.

\begin{figure}
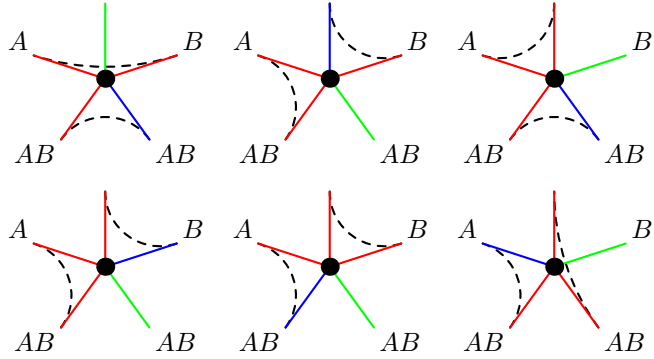

\begin{center}
\epsfbox{scc2.63}
\hskip 2mm
\epsfbox{scc2.64}
\hskip 2mm
\epsfbox{scc2.65}
\end{center}
\begin{center}
\epsfbox{scc2.66}
\hskip 2mm
\epsfbox{scc2.67}
\hskip 2mm
\epsfbox{scc2.68}
\end{center}
\caption{Routing the trail $W$ (indicated by dashed edges) through a good vertex of degree five with three red edges. The letters indicate edges contained in the sets $A$ and $B$. Symmetric cases are omitted.}
\label{fig-flow-5good}
\end{figure}

If $w'$ is a good vertex of degree six, proceed as follows.
If $|\varphi(A_{w'})|=1$ and $f\in A_{w'}$, let the next edge $f'$ of $W$ be the other edge
contained in $A_{w'}$; if $|\varphi(A_{w'})|=1$ and $f\not\in A_{w'}$,
let $f'$ be any red or blue edge not contained in $A_{w'}$ or in $W$. A symmetric
rule applies if $|\varphi(B_{w'})|=1$, i.e., $f'$ is the other edge
of $B_{w'}$ if $f\in B_{w'}$ and $f'$ is a red or blue edge not contained
in $B_{w'}$ or $W$, otherwise.

If $|\varphi(C_{w'})|=2$ and
$f\in C_{w'}$ and the other edge of $C_{w'}$ is red or blue,
set $f'$ to be the other edge of $C_{w'}$; if $f\in C_{w'}$ and the other edge
of $C_{w'}$ is green, choose $f'$ to be any red or blue edge incident with $w'$
that is not contained in $W$. If $f\not\in C_{w'}$ (and $|\varphi(C_{w'})|=2$),
choose $f'$ to be a red or blue
edge incident with $w'$ not contained in $W$ that is also not contained in $C_{w'}$.
If such an edge does not exist, choose $f'$ to be the red or blue edge contained
in $C_{w'}$ (note that the other edge of $C_{w'}$ is green since $w'$ is incident
with an even number of red, green and blue edges).

It remains to consider the case that $w'$ is incident with two edges of each
color and $|\varphi(A_{w'})|=|\varphi(B_{w'})|=2$ and $|\varphi(C_{w'})|=1$.
If $f$ is blue, set $f'$ to be any red edge incident with $w'$ not contained in $W$ and
if $f$ is red, set $f$ to be any such blue edge.
See Figure~\ref{fig-flow-6good} for an illustration of these rules.

\begin{figure}
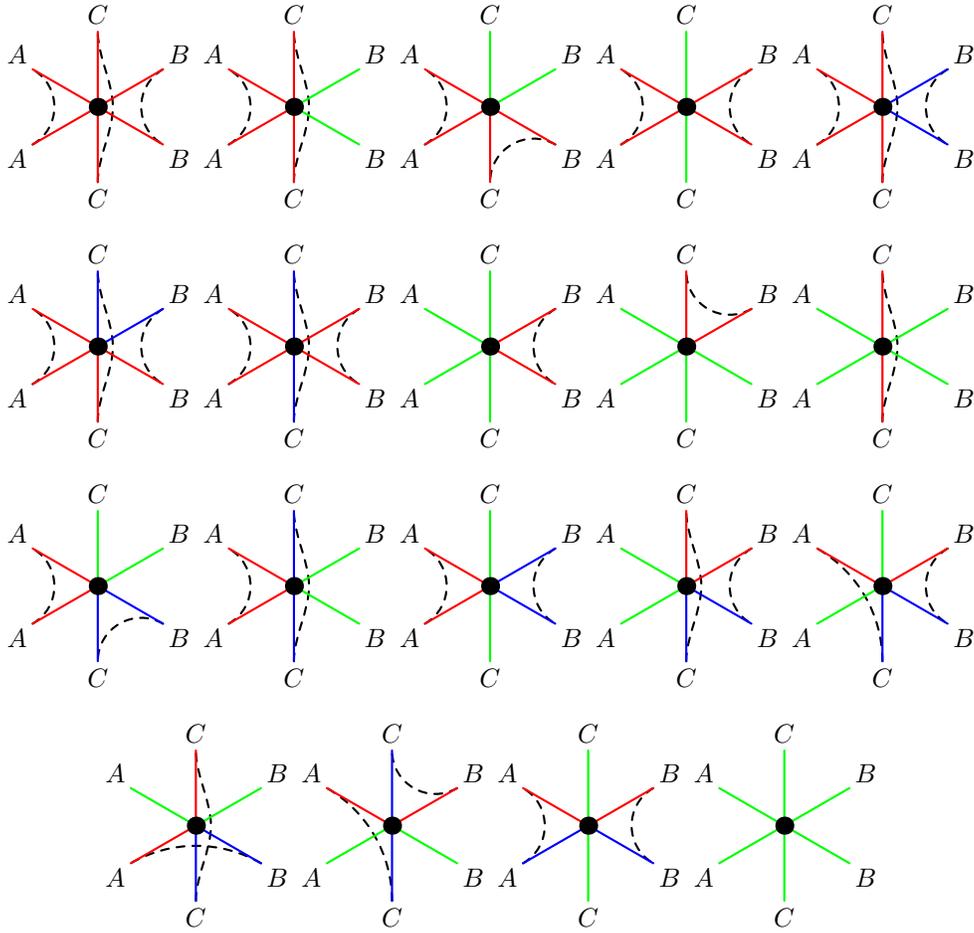

\begin{center}
\epsfbox{scc2.69}
\epsfbox{scc2.70}
\epsfbox{scc2.71}
\epsfbox{scc2.72}
\epsfbox{scc2.73}
\vskip 4mm
\epsfbox{scc2.74}
\epsfbox{scc2.75}
\epsfbox{scc2.76}
\epsfbox{scc2.77}
\epsfbox{scc2.78}
\vskip 4mm
\epsfbox{scc2.79}
\epsfbox{scc2.80}
\epsfbox{scc2.81}
\epsfbox{scc2.82}
\epsfbox{scc2.85}
\vskip 4mm
\epsfbox{scc2.86}
\epsfbox{scc2.83}
\epsfbox{scc2.84}
\epsfbox{scc2.87}
\end{center}
\caption{Routing the trail $W$ (indicated by dashed edges) through a good vertex of degree six. The letters indicate edges contained in the sets $A$, $B$ and $C$. Symmetric cases are omitted.}
\label{fig-flow-6good}
\end{figure}

The definition of the trail $W$ is now finished.
Swap the red and blue colors on $W$.
It is straightforward to verify that all good vertices remain good and the vertex $v$
becomes good (see Figures~\ref{fig-flow-bad}--\ref{fig-flow-6good}).
In particular, the number of bad vertices is decreased which contradicts
the choice of $\varphi$.

Assume that there is a bad vertex $v$ of degree six, i.e.,
the colors of the edges of $A_v$ are distinct, the colors of the edges of $B_v$
are distinct, the colors of the edges of $C_v$ are the same and
not all the flow values are present at the vertex $v$ (see Figure~\ref{fig-flow-bad}).
By symmetry, we can assume that the two edges of $A_v$ are red and green, the two edges
of $B_v$ are also red and green, and the two edges of $C_v$ are both red (recall
that the vertex $v$ is incident with even numbers of red, green and blue edges).
As in the case of vertices of degree five, we find a trail formed by red and
blue edges and swap the colors of the edges on the trail.
The first edge of the trail is any red edge incident with $v$ and the trail $W$
is finished when it reaches again the vertex $v$. After swapping red and blue colors
on the trail $W$, the vertex $v$ is incident with two edges of each of the three
colors. Again, the number of bad vertices has been decreased which contradicts
our choice of the flow $\varphi$.
\end{proof}

\subsection{Rainbow Lemma}
\label{sub-rainbow}

In this section, we present a generalization of an auxiliary lemma
referred to as the Rainbow Lemma.

\begin{lemma}[Rainbow Lemma]
\label{lm-rainbow}
Every cubic bridgeless graph $\GG$ contains
a $2$-factor $F$ such
that the edges of $\GG$ not contained in $F$ can be colored with three
colors, red, green and blue in the following way:
\begin{itemize}
\item every even circuit of $F$ contains an even number of vertices
      incident with red edges, an even number of vertices incident
      with green edges and an even number number of vertices
      incident with blue edges, and
\item every odd circuit of $F$ contains an odd number of vertices
      incident with red edges, an odd number of vertices incident
      with green edges and an odd number number of vertices
      incident with blue edges.
\end{itemize}
\end{lemma}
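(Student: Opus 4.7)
The plan is to reformulate the Rainbow Lemma as a flow-existence problem and invoke Theorem~\ref{thm-4flow}. I would fix any $2$-factor $F$ of $\GG$ (which exists by Petersen's theorem, since $\GG$ is cubic and bridgeless), let $M = E(\GG)\setminus F$ be the complementary perfect matching, and let $\GG' = \GG/F$ be the graph obtained by contracting every circuit of $F$ to a single vertex; the edges of $\GG'$ are precisely the edges of $M$, with matching edges having both endpoints in a common circuit of $F$ becoming loops.

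Encoding the three colours as the nonzero elements of $\ZZ_2^2$, a prospective colouring becomes a map $\varphi\colon M \to \ZZ_2^2\setminus\{0\}$. A short calculation shows that the Rainbow condition at a circuit $C$---namely, that the three colour-counts among the vertices of $C$ share a common parity equal to $|V(C)|\bmod 2$---is equivalent to the vanishing of $\sum_{v\in V(C)}\varphi(e_v)$ in $\ZZ_2^2$, where $e_v$ denotes the matching edge incident with $v$. Matching edges with both endpoints in $C$ contribute $2\varphi(e)=0$ to this sum, so it further reduces to $\sum_{e\in\delta_{\GG'}(v_C)}\varphi(e)$, which is the flow-conservation condition at the vertex $v_C$ of $\GG'$. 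Hence finding a Rainbow colouring is equivalent to finding a nowhere-zero $\ZZ_2^2$-flow on $\GG'$. By Theorem~\ref{thm-4flow}, such a flow exists whenever $\GG'$ has no edge-cut of size $1$ or $3$.

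Every edge-cut of $\GG'$ lifts to an edge-cut of the same size in $\GG$ (no edge of $F$ can cross a partition that respects the contracted circuits), so the bridgelessness of $\GG$ already forces $\GG'$ bridgeless for every $F$. The remaining obstacle is to eliminate $3$-cuts of $\GG'$: a $3$-cut of $\GG$ that is incident with a single vertex $v$ contains two $F$-edges and is destroyed by the contraction (the circuit containing $v$ has other vertices), so such cuts give no trouble; but $3$-cuts of $\GG$ that separate two non-trivial pieces must be dealt with. I would handle these by induction on the number of non-trivial $3$-edge-cuts of $\GG$. In the base case every $3$-cut of $\GG$ is incident with a single vertex and any $2$-factor yields a $\GG'$ free of $3$-cuts, finishing the proof via Theorem~\ref{thm-4flow}. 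In the inductive step I would split $\GG$ along a non-trivial $3$-cut $\{e_1,e_2,e_3\}$ into two cubic bridgeless minors $\GG_1^*,\GG_2^*$ obtained by identifying the three attachment points on each side, apply the induction hypothesis on each, and glue the two Rainbow structures back together. The main technical difficulty is this gluing: the two $2$-factors produced on $\GG_1^*$ and $\GG_2^*$ may disagree on which of $e_1,e_2,e_3$ should belong to $F$, and even if they agree, the three colours may be assigned inconsistently across the cut. One must adjust the choices on one side (exploiting the flexibility in the induction together with the permutation symmetry among the three colours) to make the two halves compatible, which is where the bulk of the case analysis lies.
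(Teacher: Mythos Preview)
Your reformulation of the Rainbow condition as the existence of a nowhere-zero $\ZZ_2^2$-flow on $\GG/F$ is correct and is exactly how the paper proceeds. The divergence comes in how you guarantee such a flow exists. The paper does not attempt to work with an arbitrary $2$-factor and then repair $3$-cuts; instead it invokes the folklore fact (citing~\cite{bib-kral06+,bib-zhang97}) that every bridgeless cubic graph has a $2$-factor $F$ for which $\GG/F$ is $5$-odd-connected, and then applies Theorem~\ref{thm-4flow} directly. Your opening sentence, ``fix any $2$-factor $F$'', is therefore already off: the whole point is that not every $F$ works, and the paper's one-line proof consists of choosing the right one.

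Your proposed inductive substitute for this folklore result has a genuine gap at the gluing step, and it is not merely a matter of case analysis. After splitting along a non-trivial $3$-cut $\{e_1,e_2,e_3\}$ and applying the induction hypothesis to $\GG_1^*$ and $\GG_2^*$, each side returns with \emph{some} rainbow $2$-factor, and at the contracted vertex exactly one of $e_1,e_2,e_3$ lies in the matching. Colour permutations let you reconcile the colours once the matchings agree on \emph{which} $e_k$ is the matching edge, but nothing in your induction hypothesis lets you force that choice: if the two sides disagree on $k$, the $2$-factors do not glue to a $2$-factor of $\GG$ at all. To make the induction go through you would need a strengthened hypothesis of the form ``for every vertex $v$ and every edge $e$ incident with $v$, there is a rainbow $2$-factor whose matching contains $e$''---which is essentially the statement that every edge lies in a perfect matching $M$ with $\GG/(\GG\setminus M)$ free of $3$-cuts. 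That is no easier than the folklore result the paper cites, so your induction does not circumvent it. Either cite that result as the paper does, or prove the strengthened hypothesis directly (for which the standard route is via Edmonds' perfect-matching polytope, not a $3$-cut induction).
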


\noindent In the rest, a $2$-factor $F$ with an edge-coloring satisfying
the constraints given in Lemma~\ref{lm-rainbow} will be called
a {\em rainbow $2$-factor}.
Rainbow $2$-factors implicitly appear in, e.g.,
\cite{bib-fan94,bib-kral+,bib-macajova05+}, and are related
to the notion of parity $3$-edge-colorings
from the Ph.D.~thesis of Goddyn~\cite{bib-goddyn88}.
The lemma follows from Theorem~\ref{thm-4flow} and the folklore statement
that every cubic bridgeless graph has a $2$-factor $F$ such that $\GG/F$
is $5$-odd-connected (see~\cite{bib-kral06+,bib-zhang97}).
The following strengthening of this statement appears in~\cite{bib-fan90}
which we use in the proof of Lemma~\ref{lm-rainbow-mindegree}.

\begin{lemma}
\label{lm-rainbow-weighted}
Let $\GG$ be a bridgeless cubic graph with edges assigned weights and
let $w_0$ be the total weight of all the edges of $\GG$.
The graph $\GG$ contains a $2$-factor $F$ such that
$\GG/F$ is $5$-odd-connected and
the total weight of the edges of $F$ is at most $2w_0/3$.
\end{lemma}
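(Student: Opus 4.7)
I would approach this by passing to the complementary matching. Since $\GG$ is cubic, any $2$-factor $F$ is the complement of a perfect matching $M := E(\GG)\setminus F$, and $w(F)\le 2w_0/3$ is equivalent to $w(M)\ge w_0/3$. Call $M$ \emph{nice} if $|M\cap C|=1$ for every $3$-edge-cut $C$ of $\GG$. The key observation is that a nice $M$ automatically makes $\GG/F$ be $5$-odd-connected: any edge-cut of size $k$ in $\GG/F$ lifts to an edge-cut $E(A,B)$ of $\GG$ of the same size with $E(A,B)\subseteq M$; size $1$ is excluded because $\GG$ is bridgeless, and size $3$ would embed a $3$-edge-cut of $\GG$ entirely inside $M$, contradicting niceness. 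Thus the task reduces to finding a nice perfect matching of weight at least $w_0/3$.

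To produce such a matching I would invoke Edmonds' perfect matching polytope theorem and consider the ``centroid'' $x^\star := \frac{1}{3}\mathbf{1}\in\mathbb{R}^{E(\GG)}$. Cubicity gives $\sum_{e\ni v}x^\star_e=1$ at every vertex $v$, and cubicity together with bridgelessness forces every cut $\partial(A)$ with $|A|$ odd to have size at least $3$ (its size is odd by parity and cannot equal $1$), so $x^\star$ satisfies every Edmonds odd-cut inequality $\sum_{e\in\partial(A)}x^\star_e\ge 1$. Hence $x^\star$ lies in the perfect matching polytope $\mathcal{P}(\GG)$. Moreover $x^\star$ meets this inequality with equality on every $3$-edge-cut. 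Let $\Phi$ be the face of $\mathcal{P}(\GG)$ cut out by requiring equality on every $3$-edge-cut inequality; then $x^\star\in\Phi$, and the extreme points of $\Phi$ are precisely the characteristic vectors of nice matchings.

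Expressing $x^\star$ as a convex combination $\sum_i \mu_i\chi(M_i)$ of extreme points of $\Phi$ therefore represents it using only nice matchings. A direct computation gives $\sum_i \mu_i w(M_i) = \sum_{e}w(e)\,x^\star_e = w_0/3$, so some index $i$ satisfies $w(M_i)\ge w_0/3$. Then $F := E(\GG)\setminus M_i$ is a $2$-factor with $w(F)\le 2w_0/3$, and $\GG/F$ is $5$-odd-connected by the observation from the first paragraph.

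The main conceptual step is recognizing that the nice matchings span a face of $\mathcal{P}(\GG)$ containing the centroid $x^\star$; once this is in place the averaging is essentially immediate. Behind the scenes I use the nontrivial direction of Edmonds' theorem to conclude $x^\star\in\mathcal{P}(\GG)$, but the face characterization and the lifting of cuts from $\GG/F$ to $\GG$ are straightforward. By contrast, trying to prove the lemma directly by perturbing an existing $2$-factor via alternating-cycle switches looks considerably harder to control while simultaneously preserving $5$-odd-connectivity.
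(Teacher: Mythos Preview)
Your argument is correct. The passage from nice matchings to $5$-odd-connectivity of $\GG/F$ is exactly right (a cut in $\GG/F$ lifts to a cut of $\GG$ of the same size lying entirely in $M$, and niceness together with bridgelessness rules out sizes $1$ and $3$), and the polytope step is also sound: in a cubic bridgeless graph every odd set has an odd boundary of size at least $3$, so $x^\star=\tfrac{1}{3}\mathbf{1}$ satisfies all Edmonds constraints with equality precisely on the $3$-edge-cuts, hence lies on the face $\Phi$ whose vertices are exactly the nice matchings; averaging then yields a nice matching of weight at least $w_0/3$.

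As for comparison with the paper: the paper does not give a proof of this lemma at all---it simply quotes it from Fan's paper~\cite{bib-fan90} and moves on. So there is no ``paper's proof'' to compare against. Your approach is the now-standard one (the fact that $\tfrac{1}{3}\mathbf{1}$ is in the perfect matching polytope of a cubic bridgeless graph, and that it lies on the face cut out by the $3$-edge-cut equalities, is the idea behind the references~\cite{bib-kral06+,bib-zhang97} mentioned just before the lemma for the unweighted version). Fan's original argument is phrased somewhat differently, but your write-up would be perfectly acceptable as a self-contained proof here.
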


For the proofs of our bounds, we need two strengthenings of the Rainbow Lemma.
Some additional notation must be presented.
The {\em pattern} of a circuit $C=v_1\ldots v_k$ of a rainbow $2$-factor $F$
is $X_1\ldots X_k$ where $X_i$ is the color of the edge
incident with the vertex $v_i$;
we use R to represent the red color,
G the green color and B the blue color. Two patterns are said to
be {\em symmetric} if one of them can be obtained from the other
by a rotation, a reflection and/or a permutation of the red, green and
blue colors. For example, the patterns RRGBGB and RBRBGG are symmetric
but the patterns RRGBBG and RRGBGB are not.
A pattern $P$ is {\em compatible} with a pattern $P'$
if $P'$ is obtained from $P$
by replacement of some of the colors with the letter x (which represents a wild-card);
note that rotating and reflecting $P'$ are not allowed.
For example, the pattern RGRGBBGG is compatible with RBRxxxBx.

The first modification of the Rainbow Lemma is the following.

\begin{lemma}
\label{lm-rainbow-cubic}
Every cubic bridgeless graph $\GG$ contains
a rainbow $2$-factor $F$ such that
\begin{itemize}
\item no circuit of the $2$-factor $F$ has length three,
\item every circuit of length four has a pattern symmetric to RRRR or RRGG, and
\item every circuit of length eight has a pattern symmetric to one of
      the following 16 patterns: \\
      RRRRRRRR, RRRRRRGG, RRRRGGGG, RRRRGGBB, \\ RRGGRRGG, RRGGRRBB,
      RRRRGRRG, RRRRGBBG, \\ RRGGRGGR, RRGGRBBR, RRGGBRRB,
      RRRRGRGR, \\ RRRGBGBR, RRGRGRGG, RRGRBRBG and RRGGBGBG.
\end{itemize}
\end{lemma}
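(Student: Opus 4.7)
The plan is to read off the rainbow coloring from the values of a nowhere-zero $\ZZ_2^2$-flow on $\GG/F$, for a carefully chosen $2$-factor $F$: identify the three nonzero elements of $\ZZ_2^2$ with the colors red, green, and blue, and note that the flow-conservation law at the vertex of $\GG/F$ corresponding to a circuit $C$ of $F$ is precisely the rainbow parity condition for $C$. To enforce pattern restrictions on short circuits, we will split the corresponding vertex in $\GG/F$ along well-chosen pairs of neighbors; the two-edge path introduced by a splitting forces the two edges that replace it to carry equal flow values, hence the same color. We begin by applying Lemma~\ref{lm-rainbow-weighted} with uniform edge weights to obtain a $2$-factor $F$ for which $\GG/F$ is $5$-odd-connected. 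This already takes care of the first item of the statement: a $3$-circuit of $F$ would yield a vertex of degree three in $\GG/F$, whose star is an odd edge-cut of size $3<5$, a contradiction.

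Next, I would handle each short circuit one at a time. For a $4$-circuit $C$ with cyclic neighbors $v_1,v_2,v_3,v_4$, apply Lemma~\ref{lm-split4} to the corresponding degree-$4$ vertex, obtaining either the splitting with $(v_1,v_2)$ or with $(v_2,v_3)$; both preserve $5$-odd-connectivity. In either case two cyclically consecutive edges must receive the same color, and combining this with the rainbow parity condition forces the pattern of $C$ to be symmetric to RRRR or RRGG. For an $8$-circuit $C$, first apply Lemma~\ref{lm-split8} to split off one consecutive pair, relabel so that it is $(v_1,v_2)$, and then invoke Lemma~\ref{lm-split8spc} to obtain one of four further splitting configurations, each imposing two or three color equalities among the eight edges around $C$. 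For each of these four configurations, a mechanical enumeration of all length-$8$ patterns in $\{\mathrm{R},\mathrm{G},\mathrm{B}\}^8$ that (i) have each color appearing an even number of times and (ii) respect the forced equalities shows that every such pattern is symmetric to one of the sixteen listed patterns.

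After processing all short circuits the modified graph $\GG'$ is still $5$-odd-connected, so in particular has no edge-cut of size one or three. Theorem~\ref{thm-4flow} then supplies a nowhere-zero $\ZZ_2^2$-flow on $\GG'$, which, after collapsing every two-edge path introduced by a splitting back to the pair of original edges, is a nowhere-zero $\ZZ_2^2$-flow on $\GG/F$ satisfying all the color equalities we have imposed; interpreting the three nonzero values as the three colors yields the required rainbow $2$-factor. The hard part is the $8$-circuit enumeration: one must verify that the sixteen listed patterns are exactly those compatible, up to rotation, reflection and color permutation, with at least one of the four configurations of Lemma~\ref{lm-split8spc}, which is tedious but routine; one must also observe that the sequential application of splittings to different circuits does not destroy $5$-odd-connectivity, which follows because each splitting lemma only requires $5$-odd-connectivity of the \emph{current} graph and is preserved by the next step.
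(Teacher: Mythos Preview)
Your overall strategy matches the paper's: choose $F$ with $\GG/F$ $5$-odd-connected, split the vertices of degree $4$ and $8$ in $\GG/F$ so as to impose color equalities, apply Theorem~\ref{thm-4flow}, and read off the coloring. The treatment of $3$- and $4$-circuits is correct. Two points deserve attention.

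\textbf{A genuine gap in the $8$-circuit case.} Lemma~\ref{lm-split8spc} guarantees that one of four graphs is $5$-odd-connected, but in the first two outcomes ($\GG.v_1vv_2.v_3vv_4$ and $\GG.v_1vv_2.v_7vv_8$) the vertex $v$ still has degree four, and only \emph{two} color equalities are forced (say $c_1=c_2$ and $c_3=c_4$). Your assertion that the enumeration then lands inside the sixteen listed patterns is false. For instance, the pattern $\mathrm{RRGGBRBR}$ satisfies $c_1=c_2$, $c_3=c_4$ and the even-parity condition, yet it is not symmetric to any of the sixteen: a quick invariant is the number of cyclically adjacent equal-color pairs, which equals $3$ for $\mathrm{RRGGBRBR}$, and the only listed $(4,2,2)$-pattern with that invariant is $\mathrm{RRRGBGBR}$, whose four majority letters are consecutive while those of $\mathrm{RRGGBRBR}$ are not. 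The paper closes this gap by applying Lemma~\ref{lm-split4} once more to the residual degree-$4$ vertex in those two outcomes, obtaining a third forced equality ($c_5=c_6$ or $c_6=c_7$); the fourth equality is then automatic by parity, and only now does the enumeration collapse to the sixteen patterns. You need to insert this extra splitting step.

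\textbf{A minor technical point.} The graph $\GG/F$ may have loops and parallel edges, so speaking of ``cyclic neighbors $v_1,\dots,v_4$'' of a vertex is ambiguous; the splitting lemmas are phrased for distinct neighbors. The paper sidesteps this by first subdividing every edge of $\GG/F$, which makes the graph simple while preserving $5$-odd-connectivity and leaving the eventual flow on the original edges unchanged. You should either do the same or explicitly argue that the splitting lemmas apply to incident \emph{edges} rather than neighbors.
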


\begin{proof}
Let $F$ be a $2$-factor of $\GG$ such that the graph $\GG/F$ is $5$-odd-connected.
Since $\GG$ is cubic and $\GG/F$ $5$-odd-connected,
the $2$-factor $F$ contains no circuits of length three.
Let $H_0$ be the graph obtained
from the graph $\GG/F$ by subdividing each edge.
Clearly, $H_0$ is also $5$-odd-connected.
This modification of $\GG/F$ to $H_0$
is needed only to simplify our arguments later in the proof since it guarantees
that the graph is simple and makes it more convenient to apply
Lemmas~\ref{lm-split4}--\ref{lm-split8spc}.

In a series of steps,
we iteratively modify the graph $H_0$ to graphs $H_1$, $H_2$, etc.
All the graphs $H_1$, $H_2$, \dots will be simple and
$5$-odd-connected.

If the graph $H_i$ contains a vertex $v$ of degree four, then the vertex $v$
corresponds to a circuit $C$ of length four in $F$. Let $v_1$, $v_2$, $v_3$ and $v_4$
be the neighbors of $v$ in the order in which the edges $vv_1$, $vv_2$, $vv_3$ and $vv_4$
correspond to edges incident with the circuit $C$.
The graph $H_{i+1}$ is a $5$-odd-connected graph among $H_i.v_1vv_2$ and $H_i.v_2vv_3$
(by Lemma~\ref{lm-split4} at least one of them is $5$-odd-connected);
see Figure~\ref{fig-rainbow-cubic-1}.
No new vertices of degree four or eight were introduced at this stage.

\begin{figure}
\begin{center}
\epsfbox{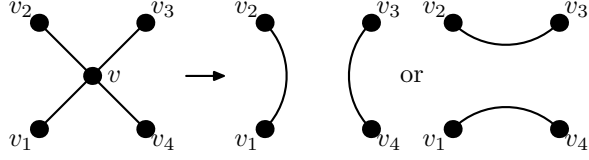}
\end{center}
\caption{Reduction of a vertex of degree four in a graph $H_i$ in the proof of
Lemma~\ref{lm-rainbow-cubic}. The newly created vertices of degree two are not
drawn in the figure.}
\label{fig-rainbow-cubic-1}
\end{figure}

If the graph $H_i$ contains a vertex $v$ of degree eight, we proceed as follows.
The vertex $v$ corresponds to a circuit $C$ of $F$ of length eight;
let $v_1$, \dots, $v_8$
be the neighbors of $v$ in $H_i$ in the order in that they correspond to the edges
incident with the circuit $C$. By Lemma~\ref{lm-split8}, we can assume that the graph
$H_i.v_1vv_2$ is $5$-odd-connected (for a suitable choice of the cyclic
rotation of the neighbors of $v$).

By Lemma~\ref{lm-split8spc},
at least one of the following graphs is $5$-odd-connected:
$H_i.v_1vv_2.v_3vv_4$, $H_i.v_1vv_2.v_7vv_8$, $H_i.v_1vv_2.v_3vv_8.v_4vv_5$ and
$H_i.v_1vv_2.v_3vv_8.v_4vv_6$. If the graph $H_i.v_1vv_2.v_3vv_4$ 
is $5$-odd-connected, we then apply Lemma~\ref{lm-split4} to the graph
$H_i.v_1vv_2.v_3vv_4$ and conclude that the graph $H_i.v_1vv_2.v_3vv_4.v_5vv_6$ or
the graph $H_i.v_1vv_2.v_3vv_4.v_6vv_7$ is $5$-odd-connected.
Since the case that the graph $H_i.v_1vv_2.v_7vv_8$ is $5$-odd-connected
is symmetric to the case of the graph $H_i.v_1vv_2.v_3vv_4$,
it can be assumed that (at least) one of the following four graphs is $5$-odd-connected:
$H_i.v_1vv_2.v_3vv_4.v_5vv_6$, $H_i.v_1vv_2.v_3vv_4.v_6vv_7$,
$H_i.v_1vv_2.v_3vv_8.v_4vv_5$ and $H_i.v_1vv_2.v_3vv_8.v_4vv_6$.
Let $H_{i+1}$ be a graph among these graphs that
is $5$-odd-connected (see Figure~\ref{fig-rainbow-cubic-2}).

\begin{figure}
\begin{center}
\epsfbox{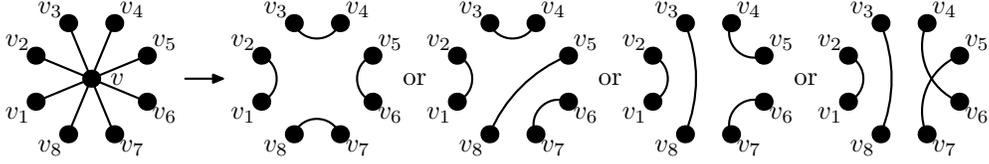}
\end{center}
\caption{Reduction of a vertex of degree eight in a graph $H_i$ in the proof of Lemma~\ref{lm-rainbow-cubic}. The newly created vertices of degree two are not
drawn in the figure.}
\label{fig-rainbow-cubic-2}
\end{figure}

We eventually reach a 5-odd-connected graph $H_k$
with no vertices of degree four or eight.
By Theorem~\ref{thm-4flow}, the graph $H_k$ has a nowhere-zero $\ZZ_2^2$-flow.
This flow yields a nowhere-zero $\ZZ_2^2$-flow
in $H_{k-1},\ldots, H_0$ and eventually in $G/F$.
Note that the pairs of edges split away from a vertex
are assigned the same flow value.

The nowhere-zero $\ZZ_2^2$-flow of $G/F$ gives the coloring of the edges
with red, green and blue.
Since the colors of the edges of $\GG/F$ correspond
to a nowhere-zero $4$-flow of $\GG/F$,
$F$ is a rainbow $2$-factor with respect to this edge-coloring.

We now verify that the edge-coloring of $\GG/F$ also satisfies
the additional constraints given in the statement.
Let us start with the first constraint, and
let $C$ be a circuit of $F$ of length four, $v$ the vertex of $\GG/F$ corresponding
to $C$ and $e_1$, $e_2$, $e_3$ and $e_4$ the four (not necessarily distinct)
 edges leaving $C$ in $\GG$.
In $H_0$, the edge $e_i$ corresponds to
an edge $vv_i$ for a neighbor $v_i$ of $v$.
During the construction of $H_k$, either the vertices $v_1$ and $v_2$ or
the vertices $v_2$ and $v_3$ are split away from $v$.
In the former case,
the colors of the edges $e_1$ and $e_2$ are the same and
the colors of the edges $e_3$ and $e_4$ are the same;
in the latter case,
the colors of the edges $e_1$ and $e_4$ and
the colors of the edges $e_2$ and $e_3$ are the same.
In both cases, the pattern of $C$ is symmetric to RRRR or RRGG.

Let $C$ be a circuit of $F$ of length eight, $v$ the vertex of $\GG/F$ corresponding
to $C$ and $e_1$, \dots, $e_8$ the eight edges leaving $C$ in $\GG$ (note that
some of the edges $e_1$, \dots, $e_8$ can be the same). Let $c_i$ be the color
of the edge $e_i$. Based on the splitting,
one of the following four cases (up to symmetry) applies:
\begin{enumerate}
\item $c_1=c_2$, $c_3=c_4$, $c_5=c_6$ and $c_7=c_8$,
\item $c_1=c_2$, $c_3=c_4$, $c_5=c_8$ and $c_6=c_7$,
\item $c_1=c_2$, $c_3=c_8$, $c_4=c_5$ and $c_6=c_7$, and
\item $c_1=c_2$, $c_3=c_8$, $c_4=c_6$ and $c_5=c_7$.
\end{enumerate}
In the first case, the pattern of the circuit $C$ is symmetric to
RRRRRRRR, RRRRRRGG, RRRRGGGG, RRRRGGBB, RRGGRRGG or RRGGRRBB.
In the second case, the pattern of $C$ is symmetric to
RRRRRRRR, RRRRRRGG, RRRRGGGG, RRRRGGBB, RRRRGRRG, RRRRGBBG,
RRGGRGGR, RRGGRBBR or RRGGBRRB.
The third case is symmetric to the second one (see Figure~\ref{fig-rainbow-cubic-2}).
In the last case, the pattern of $C$ is symmetric to
RRRRRRRR, RRRRRRGG, RRRRGRGR, RRRRGRRG, RRRRGGGG, RRGRGRGG,
RRRGBGBR, RRGRBRBG, RRGGBGBG or RRRRGBBG.
In all the four cases, the pattern of $C$ is one of the patterns
listed in statement of the lemma.
\end{proof}

We now present the second modification of the rainbow lemma.

\begin{lemma}
\label{lm-rainbow-mindegree}
Let $\GG$ be a bridgeless cubic graph with edges assigned non-negative
integer weights and $w_0$ be the total weight of the edges.
In addition, suppose that no two edges with weight zero have
a vertex in common. The graph $\GG$ contains a rainbow $2$-factor $F$ such
that the total weight of the edges of $F$ is at most $2w_0/3$.
Moreover, the patterns of circuits with four edges of weight one
are restricted as follows.
Every circuit $C=v_1\ldots v_k$ of $F$ that consists of
four edges of weight one and at most four edges of weight zero (and
no other edges) has a pattern:
\begin{itemize}
\item compatible with RRxx or xRRx
      if $C$ has no edges of weight zero (and thus $k=4$),
\item compatible with RxGxx or RRRGB
      if the only edge of $C$ of weight zero is $v_4v_5$ (and thus $k=5$),
\item compatible with xxRRxx, xxxxRR, xxRGGR or xRxGGR
      if the only edges of $C$ of weight zero are $v_3v_4$ and $v_5v_6$ (and thus $k=6$),
\item not compatible with RRGRRG, RRGRGR, RGRRRG or RGRRGR
      if the only edges of $C$ of weight zero are $v_2v_3$ and $v_5v_6$ (and thus $k=6$),
\item compatible with xRRxxxx, xxxRRxx, xxxxxRR, xRGxxRB, xRGxxBR, xRGxxGB, xRGxxBG, xxxRGRG or xxxRGGR
      if the only edges of $C$ of weight zero are $v_2v_3$, $v_4v_5$ and $v_6v_7$ (and thus $k=7$), and
\item compatible with RRxxxxxx, xxRRxxxx, xxxxRRxx, xxxxxxRR, RGGRxxxx, xxRGGRxx, xxxxRGGR or GRxxxxRG
      if the edges $v_1v_2$, $v_3v_4$, $v_5v_6$ and $v_7v_8$ of $C$ have
      weight zero (and thus $k=8$).
\end{itemize}
\end{lemma}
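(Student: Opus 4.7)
The plan is to extend the proof of Lemma~\ref{lm-rainbow-cubic} to the weighted setting, combined with Lemma~\ref{lm-flow} to force the required local structure at vertices corresponding to special circuits. First, I would apply Lemma~\ref{lm-rainbow-weighted} to obtain a $2$-factor $F$ of $\GG$ with total weight at most $2w_0/3$ such that $\GG/F$ is $5$-odd-connected. Subdividing every edge of $\GG/F$ produces a simple $5$-odd-connected graph $H_0$ in which every circuit $C$ of $F$ corresponds to a vertex $v_C$ of degree equal to the length of $C$; in particular, the vertices of $H_0$ that correspond to the six types of special circuit listed in the statement have degrees between four and eight.

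Next, I would iteratively reduce $H_0$ by vertex splittings in the style of the proof of Lemma~\ref{lm-rainbow-cubic}, using Lemmas~\ref{lm-split4}, \ref{lm-split6}, \ref{lm-split8}, and~\ref{lm-split8spc} while preserving $5$-odd-connectedness. For each vertex $v_C$ corresponding to a special circuit, I would choose the splittings so that the pairs of edges forced to share a $\ZZ_2^2$-flow value fall at positions consistent with the listed compatible patterns. In the length-4 case both splittings available from Lemma~\ref{lm-split4} yield patterns compatible with RRxx or xRRx; in the length-6 case with zero-edges at $v_3v_4$ and $v_5v_6$, and in the length-8 case with alternating zero-edges, the reductions from Lemmas~\ref{lm-split6}, \ref{lm-split8}, and~\ref{lm-split8spc} suffice after a case analysis of which edge-pairs they equate against which positions lie at the four weight-one endpoints.

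For the length-5 and length-7 special circuits, and for the length-6 case with zero-edges at $v_2v_3$ and $v_5v_6$, splittings alone cannot exclude the full list of forbidden patterns, and I would invoke Lemma~\ref{lm-flow}. For instance, at $v_C$ of degree five with zero-edge $v_4v_5$, the $\ZZ_2^2$-flow distribution at $v_C$ is forced to be $(3,1,1)$ across the three nonzero classes, so taking $A_{v_C}=\{e_1,e_3,e_5\}$ and $B_{v_C}=\{e_1,e_3,e_4\}$ and applying Lemma~\ref{lm-flow} excludes precisely the two configurations in which the majority color sits at $\{1,3,4\}$ or $\{1,3,5\}$, which are exactly the patterns incompatible with both RxGxx and RRRGB. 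Analogous size-three multisets $A_v,B_v$ (for the length-7 case, used in concert with a further degree-reducing split) and size-two multisets $A_v,B_v,C_v$ (for the second length-6 case, where two parallel forbidden patterns must be simultaneously ruled out) would be designed to eliminate the remaining bad configurations.

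Once the reduction terminates in a graph with no vertex of degree four or eight, Theorem~\ref{thm-4flow} or the stronger Lemma~\ref{lm-flow} provides a nowhere-zero $\ZZ_2^2$-flow, which pulls back through the splittings, contraction, and subdivision to an edge-coloring of $\GG\setminus F$ witnessing $F$ as a rainbow $2$-factor of the required type; the weight bound on $F$ is already supplied by Lemma~\ref{lm-rainbow-weighted}. The main obstacle is precisely this case analysis: for each of the six configurations of zero-weight edges the splitting sequence and the flow-constraint multisets must be designed together, and one must check that every $\ZZ_2^2$-flow consistent with those choices induces a pattern compatible with one on the allowed list. The most delicate steps will be the second length-6 case, where the forbidden list contains four patterns of the same shape so both splittings at $v_C$ must be excluded by the flow constraints, and the length-7 case, in which Lemma~\ref{lm-flow} must be coordinated with a further splitting of $v_C$ after one pair of incident edges has already been tied together.
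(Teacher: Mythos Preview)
Your proposal is correct and follows essentially the same approach as the paper: obtain $F$ via Lemma~\ref{lm-rainbow-weighted}, perform vertex splittings in $\GG/F$ at the special circuits using Lemmas~\ref{lm-split4}, \ref{lm-split6}, \ref{lm-split8}, and then apply Lemma~\ref{lm-flow} with exactly the multisets you describe (including $A_w=\{e_1,e_3,e_5\}$, $B_w=\{e_1,e_3,e_4\}$ for $k=5$, and $A_w=\{e_2,e_3\}$, $B_w=\{e_5,e_6\}$, $C_w=\{e_1,e_4\}$ for the second $k=6$ case). Two cosmetic deviations: the paper does not subdivide $\GG/F$ here and does not split \emph{all} degree-four/eight vertices but only those corresponding to special circuits (Lemma~\ref{lm-flow} applies to any bridgeless graph with a nowhere-zero $\ZZ_2^2$-flow, so no global degree reduction is needed), and for $k=8$ it uses the sequence Lemma~\ref{lm-split8}$\to$\ref{lm-split6}$\to$\ref{lm-split4} rather than Lemma~\ref{lm-split8spc}; neither affects the argument.
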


\begin{proof}
Let $F$ be the $2$-factor as in Lemma~\ref{lm-rainbow-weighted} and
$M$ the complementary perfect matching.
We modify the graph $H=\GG/F$ in such a way that
an application of Lemma~\ref{lm-flow} will yield
a $\ZZ_2^2$-flow that yields an edge-coloring satisfying the conditions
from the statement of the lemma.
Let $w$ be a vertex of $H$ corresponding to a circuit $v_1\ldots v_k$ of $F$
consisting of four edges with weight one and some edges with weight zero, and
let $e_i$ be the edge of $M$ incident with $v_i$.
Finally, let $w_i$ be the neighbor of $w$ in $H$ that corresponds
to the circuit containing the other end-vertex of the edge $e_i$.
The graph $H$ is modified as follows (see Figure~\ref{fig-rainbow-mindegree}):

\begin{figure}
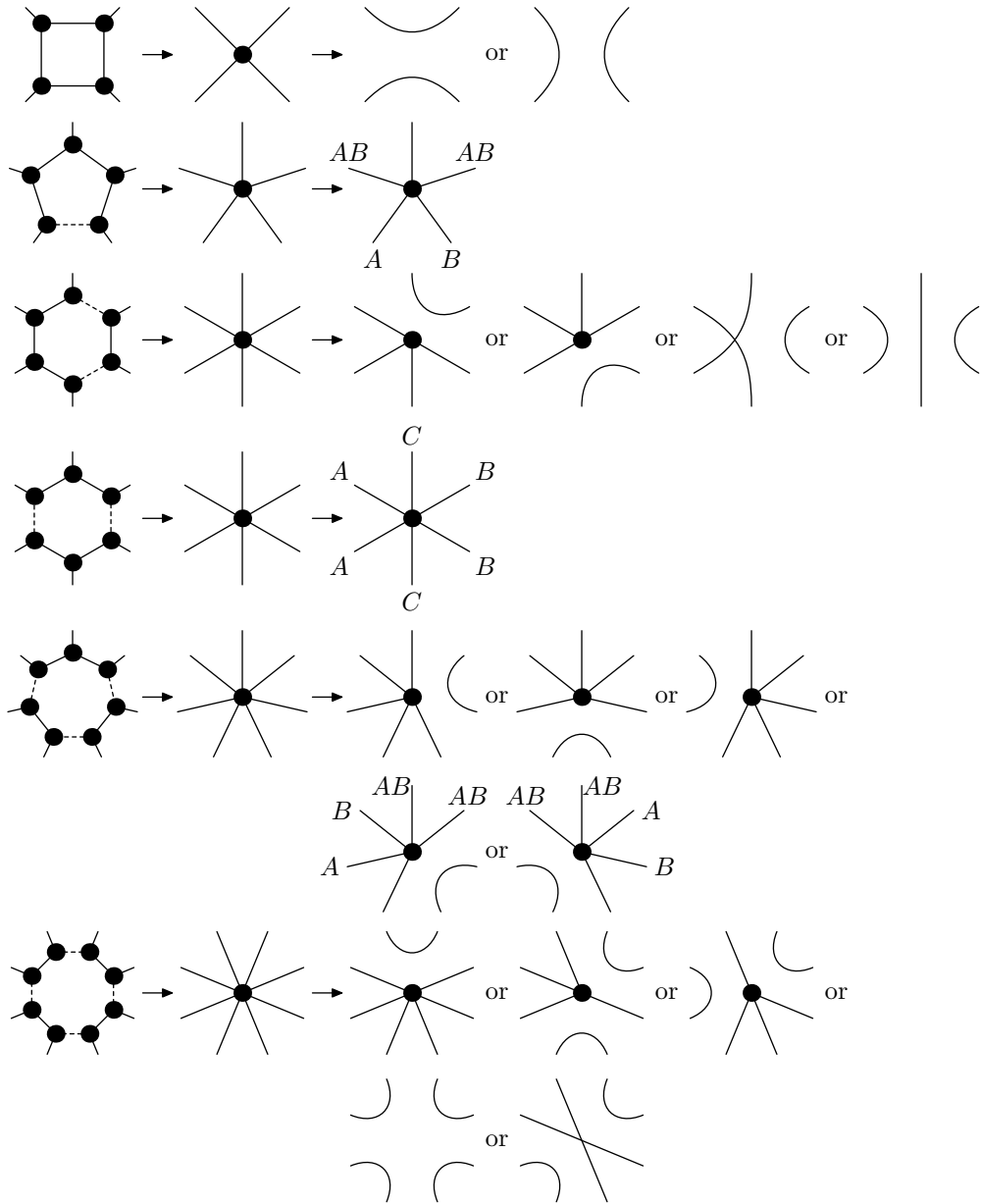

\begin{flushleft}
\epsfbox{scc2.55}
\vskip 2mm
\epsfbox{scc2.56}
\vskip 0mm
\epsfbox{scc2.57}
\vskip 2mm
\epsfbox{scc2.58}
\vskip 2mm
\epsfbox{scc2.59}
\vskip 2mm
\epsfbox{scc2.60}
\end{flushleft}
\caption{Modifications of the graph $H$ performed in the proof of
         Lemma~\ref{fig-rainbow-mindegree}. The edges of weight one are solid and
	 the edges of weight zero are dashed. The sets $A_w$, $B_w$ and $C_w$
	 are indicated by letters near the edges. Vertices of degree two obtained
	 through splittings are not depicted and some symmetric cases are omitted
	 in the case of a circuit of length eight.}
\label{fig-rainbow-mindegree}
\end{figure}

\begin{itemize}
\item if $k=4$, split the pair $w_1$ and $w_2$ or the pair
      $w_2$ and $w_3$ from $w$ in such a way that the resulting
      graph is $5$-odd-connected (at least one
      of the two splittings works by Lemma~\ref{lm-split4}).
\item if $k=5$ and the weight of the edge $v_4v_5$ is zero,
      set $A_w=\{e_1,e_3,e_5\}$ and $B_w=\{e_1,e_3,e_4\}$.
\item if $k=6$ and the weights of the edges $v_3v_4$ and $v_5v_6$ are zero,
      split the pair $w_3$ and $w_4$, $w_4$ and $w_5$, or      
      $w_5$ and $w_6$ from $w$ without creating edge-cuts of size
      one or three (one of the splitting works by Lemma~\ref{lm-split6}).
      If the pair $w_4$ and $w_5$ is split off, split further
      the pair $w_2$ and $w_6$, or the pair $w_3$ and $w_6$ from $w$
      again without creating edge-cuts of size one or three (one
      of the splitting works by Lemma~\ref{lm-split4}).
\item if $k=6$ and the weights of the edges $v_2v_3$ and $v_5v_6$ are zero,
      set $A_w=\{e_2,e_3\}$, $B_w=\{e_5,e_6\}$ and $C_w=\{e_1,e_4\}$.
\item if $k=7$ and the weights of the edges $v_2v_3$, $v_4v_5$ and $v_6v_7$ are zero,
      split one of the pairs $w_i$ and $w_{i+1}$ from $w$
      for $i\in\{2,3,4,5,6\}$ (the existence of such a splitting
      is guaranteed by Lemma~\ref{lm-split8}).
      If $w_3$ and $w_4$ is split off,
      set $A_w=\{e_1,e_2,e_6\}$ and $B_w=\{e_1,e_2,e_7\}$.
      If $w_5$ and $w_6$ is split off,
      set $A_w=\{e_1,e_2,e_7\}$ and $B_w=\{e_1,e_3,e_7\}$.
\item if $k=8$ and the weights of the edges $v_1v_2$, $v_3v_4$, $v_5v_6$ and
      $v_7v_8$ are equal to zero,
      split one of the pairs $w_i$ and $w_{i+1}$ from $w$
      for some $i\in\{1,\ldots,8\}$ (indices taken modulo eight)
      without creating edge-cuts of size one or three.
      This is possible by Lemma~\ref{lm-split8}. If $i$ is odd,
      then there are no further modifications to be performed. If $i$
      is even, one of the pairs $w_{i+3}$ and $w_{i+4}$,
      $w_{i+4}$ and $w_{i+5}$, and $w_{i+5}$ and $w_{i+6}$ is
      further split off from the vertex $w$ in such a way that
      the graph stays $5$-odd-connected (one of the splittings
      has this property by Lemma~\ref{lm-split6}). In case
      that the vertices $w_{i+4}$ and $w_{i+5}$ are split off,
      split further the pair of vertices $w_{i+2}$ and $w_{i+3}$ or
      the pair of vertices $w_{i+3}$ and $w_{i+6}$, again,
      keeping the graph $5$-odd-connected (and do not
      split off other pairs of vertices in the other cases).
      Lemma~\ref{lm-split4} guarantees that one of the two splittings
      works.
\end{itemize}

Fix a nowhere-zero $\ZZ_2^2$-flow $\varphi$ with the properties
described in Lemma~\ref{lm-flow} with respect to the sets $A_w$, $B_w$ and
$C_w$ as defined before (and where the sets $A_w$, $B_w$ and
$C_w$ are undefined, choose them arbitrarily).
The edges of $\varphi^{-1}(01)$ are colored with red,
the edges of $\varphi^{-1}(10)$ with green and
the edges of $\varphi^{-1}(11)$ with blue.
This defines the coloring of the edges of $\GG$ not contained in $F$.

Clearly, $F$ is a rainbow $2$-factor.
It remains to verify that the patterns of circuits with four
edges of weight one are as described in the statement of the lemma.
Let $C=v_1\ldots v_k$ be a circuit of $F$ consisting of four edges with weight one and
some edges with weight zero, and let $c_i$ be the color of the edge of $M$ incident with $v_i$.
We distinguish six cases based on the value of $k$ and the position of
zero-weight edges (symmetric cases are omitted):
\begin{itemize}
\item if $k=4$, then all the edges of $C$ have weight one. By the modification of $H$,
      it holds that $c_1=c_2$ or $c_2=c_3$. Hence,
      the pattern of $C$ is compatible with RRxx or xRRx.
\item if $k=5$ and the weight of $v_4v_5$ is zero,
      then either $c_1\not=c_3$, or $c_1=c_3\not\in\{c_4,c_5\}$.
      Since $C$ is incident with an odd number of edges
      of each color, its pattern is compatible with RxGxx or RRRGB.
\item if $k=6$ and the weights of $v_3v_4$ and $v_5v_6$ are zero,
      then $c_3=c_4$, or $c_4=c_5$ and $c_2=c_6$, or
      $c_4=c_5$ and $c_3=c_6$, or $c_5=c_6$.
      Hence, the pattern of $C$ is compatible with xxRRxx, xRxRRR or xRxGGR,
      xxRRRR or xxRGGR, or xxxxRR.
\item if $k=6$ and the weights of $v_2v_3$ and $v_5v_6$ are zero,
      then the pattern of $C$ contains all three possible colors or
      it is compatible with xRRxxx, xxxxRR or RxxGxx.
      In particular, it is not compatible with any of the patterns
      listed in the statement of the lemma.
\item if $k=7$ and the weights of $v_2v_3$, $v_4v_5$ and $v_6v_7$ are zero,
      then $c_i=c_{i+1}$ for some $i\in\{2,3,4,5,6\}$ by the modification of $H$.
      If $i$ is even, then the pattern of $C$ is compatible with xRRxxxx, xxxRRxx or
      xxxxxRR. If $i=3$, then $c_1\not=c_2$ or $c_1=c_2\not\in\{c_6,c_7\}$.
      Hence, the pattern of $C$ is compatible with RGRRxxx, RGBBxxx, RRGGxGB or
      RRGGxBG (unless $c_2=c_3$). Since $C$ is incident with an odd number
      of edges of each colors, its pattern is compatible with one of the patterns
      listed in the statement of the lemma. A symmetric argument applies
      if $i=5$ and either $c_1\not=c_7$ or $c_1=c_7\not\in\{c_2,c_3\}$.
\item if $k=8$ and the weights $v_iv_{i+1}$, $i=1,3,5,7$, then
      $c_i=c_{i+1}$ for $i\in\{1,\ldots,8\}$ by the modification of $H$.
      If there is such odd $i$, the pattern of $C$ is compatible with RRxxxxxx, xxRRxxxx,
      xxxxRRxx or xxxxxxRR. Otherwise, at least one of the following holds for
      some even $i$: $c_{i+3}=c_{i+4}$,
      $c_{i+4}=c_{i+5}$ or $c_{i+5}=c_{i+6}$. In the first and the last case,
      the pattern is again compatible with RRxxxxxx, xxRRxxxx, xxxxRRxx or xxxxxxRR.
      If $c_{i+4}=c_{i+5}$, then $c_{i+2}=c_{i+3}$ or $c_{i+3}=c_{i+6}$.
      Hence, the pattern of $C$ is compatible with xRRGGRRx, xRRGGBBx, xRRxRGGR,
      xRRxGRRG, xRRxGBBG or one of the patterns rotated by two, four or six
      positions. All these patterns are listed in the statement of the lemma.
\end{itemize}
\end{proof}

\section{Intermezzo}
\label{sec-intermezzo}

In order to help the reader to follow our arguments,
we present a proof that every bridgeless graph with $m$ edges
has a cycle cover of length at most $5m/3$ based on the Rainbow Lemma.
The proof differs both from the proof of Alon and Tarsi~\cite{bib-alon85+}
which is based on $6$-flows and the proof of
Bermond, Jackson and Jaeger~\cite{bib-bermond83+} based on $8$-flows;
on the other hand, its main idea resembles the proof of
Fan~\cite{bib-fan94} for cubic graphs.

Let $F$ be a set of disjoint circuits of a graph $\GG$,
e.g., $F$ can be a $2$-factor of a cubic graph $\GG$.
For a circuit $C$ of $F$ and for a set of edges of $E$ such that $C\cap E=\emptyset$,
we define $C(E)$ to be the set of vertices of $C$ incident
with an odd number of edges of $E$. If $C(E)$ has even cardinality,
it is possible to partition the edges of $C$ into two sets
$C(E)^A$ and $C(E)^B$ such that
\begin{itemize}
\item each vertex of $C(E)$ is incident with one edge of $C(E)^A$ and
      one edge of $C(E)^B$, and
\item each vertex of $C$ not contained in $C(E)$ is incident with either
      two edges of $C(E)^A$ or two edges of $C(E)^B$.
\end{itemize}
We will always assume that the number of edges of $C(E)^A$ does not exceed
the number of edges of $C(E)^B$, i.e., $|C(E)^A|\le|C(E)^B|$.
Note that if $C(E)=\emptyset$, then $C(E)^A$ contains no edges of $C$ and
$C(E)^B$ contains all the edges of $C$.

\begin{theorem}
\label{thm-intermezzo}
Let $\GG$ be a bridgeless graph with $m$ edges.
$\GG$ has a cycle cover of length at most $5m/3$.
\end{theorem}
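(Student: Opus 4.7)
My plan is to apply the Rainbow Lemma to obtain a rainbow $2$-factor $F$ with a $3$-colored complement, and then use the $C(E)^A, C(E)^B$ partitions introduced in the section preamble to build three cycles $D_R, D_G, D_B$ covering every edge of $\GG$. To handle the fact that the Rainbow Lemma is stated for cubic bridgeless graphs, I would first reduce to the cubic case by a standard transformation: suppress vertices of degree two and apply Mader/Fleischner-type edge splittings at vertices of degree at least four so as to obtain a cubic bridgeless graph $\GG'$ with $|E(\GG')| \leq m$, and argue that a cycle cover of $\GG'$ lifts to a cycle cover of $\GG$ within the target length bound.

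Given $\GG'$ with its rainbow $2$-factor $F$ and the colored matching $M = M_R \cup M_G \cup M_B$, I would set $D_X \supseteq M_X$ and choose the F-edges of $D_X$ on each circuit $C$ of $F$ as follows. On an even circuit $C$ the Rainbow Lemma gives $|C(M_X)|$ even for every color $X$, so the partition of the edges of $C$ into $C(M_X)^A$ and $C(M_X)^B$ is well defined; placing $C(M_X)^A$ (the smaller part, of size at most $|C|/2$) into $D_X$ makes $D_X$ have even degree at every vertex of $C$, because at each vertex $v$ with $Y(v) = X$ exactly one F-edge of $v$ lies in $C(M_X)^A$ and together with the color-$X$ matching edge contributes degree two to $D_X$, while at every other vertex both F-edges of $v$ lie on the same side of the partition. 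When $|C(M_X)| = 0$ the smaller part is empty and would cover no F-edge of $C$ through $D_X$, so in that case I would instead use $C(M_X)^B = C$ to guarantee that every F-edge of $C$ lies in at least one of the three cycles.

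On an odd circuit every $|C(M_X)|$ is odd, so no single-color partition is available. I would handle this by using the partitions $C(M_X \cup M_Y)^A, C(M_X \cup M_Y)^B$ for pairs of colors (whose index sets have even cardinality, being sums of two odd numbers), together with a parity-correcting modification of the matching-edge assignments that pairs up odd circuits along suitable matching edges so that a correction at one odd circuit is offset by a companion correction at a second odd circuit. This is the hard part of the argument: the simultaneous oddness of $|C(M_R)|, |C(M_G)|, |C(M_B)|$ on an odd circuit blocks the clean single-color construction, and the extra edges introduced by the corrections must be counted carefully against the overall length budget.

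For the length bound, the matching edges contribute $|M| = m/3$ and, from the inequality $|C(M_X)^A| \leq |C|/2$ summed over colors and circuits, the F-edges contribute at most $3|F|/2 = m$, giving $4m/3$ as the main term in the cubic case. The overhead from the coverage-correcting switches from $C(M_X)^A$ to $C(M_X)^B$ on circuits missing a color and from the odd-circuit modifications can be shown to be at most $m/3$, using the structural fact that odd circuits of a rainbow $2$-factor have length at least five (so there are not too many of them) and that circuits missing a color are restricted in structure. Combined with the edge-count estimates from the cubic reduction, this yields the claimed bound $5m/3$.
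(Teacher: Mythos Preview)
Your single-color construction has a genuine coverage gap. Even when all three colors are present on an even circuit, the three ``small halves'' $C(M_R)^A$, $C(M_G)^A$, $C(M_B)^A$ need not cover $C$: on a $6$-circuit with pattern RRGGBB one gets $C(M_R)^A=\{v_1v_2\}$, $C(M_G)^A=\{v_3v_4\}$, $C(M_B)^A=\{v_5v_6\}$, leaving the edges $v_2v_3$, $v_4v_5$, $v_6v_1$ uncovered. So the switch to $C(M_X)^B$ cannot be confined to circuits missing a colour, and once you are forced into such switches on generic circuits the $m/3$ overhead budget has no justification. The odd-circuit treatment is also only a sketch: you never specify the ``parity-correcting modification'' or prove it can be carried out within any length bound, and the claim that odd circuits have length at least five requires the stronger $5$-odd-connected version of the Rainbow Lemma, not the basic one.

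The paper sidesteps both difficulties by working with \emph{pairs} of colours. It takes $\CC_1=\RE\cup\GE\cup\bigcup_C C(\RE\cup\GE)^A$, $\CC_2=\RE\cup\GE\cup\bigcup_C C(\RE\cup\GE)^B$, and $\CC_3=\RE\cup\BE\cup\bigcup_C C(\RE\cup\BE)^A$. Since $C(\RE\cup\GE)^A\cup C(\RE\cup\GE)^B=C$, every $F$-edge is covered by $\CC_1$ or $\CC_2$ automatically, with no corrective switches. Moreover $|C(\RE\cup\GE)|$ is even on \emph{every} circuit (even$+$even on even circuits, odd$+$odd on odd ones), so the $A/B$ partition is always defined and no special odd-circuit argument is needed. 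The length count is then immediate: $3r+2g+b+|F|+|F|/2\le 2(r+g+b)+3|F|/2$. Finally, to turn this cubic argument into one for arbitrary bridgeless graphs, the paper does not merely suppress degree-two vertices (which would shrink the edge count and make the lifted cover too long); it uses the \emph{weighted} Rainbow Lemma (Lemma~\ref{lm-rainbow-weighted}) so that the $2$-factor carries at least $2m/3$ of the original weight, and lengths in $\GG$ are recovered exactly.
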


\begin{proof}
If $\GG$ has a vertex $v$ of degree four or more, then, by Lemma~\ref{lm-split-cyclic},
$v$ has two neighbors $v_1$ and $v_2$ such that the graph $\GG.v_1vv_2$ is also bridgeless.
Let $\GG'$ be the graph $\GG.v_1vv_2$.
The number of edges of $\GG'$ is the same as the number of edges of $\GG$ and
every cycle of $\GG'$ corresponds to a cycle of $\GG$ of the same length.
Hence, a cycle cover of $\GG'$ corresponds to a cycle cover of $\GG$ of
the same length.
Through this process we can reduce any bridgeless graph to a bridgeless
graph with maximum degree three.
In particular, we can assume without loss of generality that
the graph $\GG$ has maximum degree three and $\GG$ is connected (if not,
consider each component separately).

If $\GG$ is a circuit, the statement is trivial.
Otherwise, we proceed as we now describe.
First, we suppress all vertices of degree two in $\GG$.
Let $\GG_0$ be the resulting
cubic (bridgeless) graph. We next assign each edge $e$ of $\GG_0$
the weight equal
to the number of edges in the path corresponding to $e$ in $\GG$.
In particular, the total weight of the edges of $\GG_0$ is equal to $m$.
Let $F_0$ be a rainbow $2$-factor with weight at least $2m/3$;
the existence of $F_0$ is guaranteed by Lemma~\ref{lm-rainbow-weighted}
applied with the weight function $-w$.

The $2$-factor $F_0$ corresponds to a set $F$ of disjoint circuits of
the graph $\GG$ which do not necessarily cover all the vertices of $\GG$.
Let $w_F$ be the weight of the edges contained in the $2$-factor $F_0$, and
$r$, $g$ and $b$ the weight of red, green and blue edges, respectively.
By symmetry, we can assume that $r\le g\le b$.
Since the weight $w_F$ of the edges contained in the $2$-factor $F_0$
is at least $2m/3$, the sum $r+g+b$ is at most $m/3$.
Finally, let $\RE$ be the set of edges of $\GG$ corresponding to red edges of $\GG_0$,
$\GE$ the set of edges corresponding to green edges, and
$\BE$ the set of edges corresponding to blue edges. By the choice of
edge-weights, the cardinality of $\RE$ is $r$,
the cardinality of $\GE$ is $g$ and the cardinality of $\BE$ is $b$.

The desired cycle cover of $\GG$ which is comprised
of three cycles can now be defined.
The first cycle $\CC_1$ consists of all the red and
green edges and the edges of $C(\RE\cup\GE)^A$
for all circuits $C$ of the $2$-factor $F$.
The second cycle $\CC_2$ consists of all the red and
green edges and the edges of $C(\RE\cup\GE)^B$
for all circuits $C$ of $F$.
Finally, the third cycle $\CC_3$ consists of all the red and
blue edges and the edges of $C(\RE\cup\BE)^A$
for all circuits $C$ of $F$.

Let us first verify that the cycles $\CC_1$, $\CC_2$ and $\CC_3$ cover
the edges of $G$. Clearly, every edge not contained in $F$, i.e.,
a red, green or blue edge, is covered by at least one of the cycles.
On the other hand,
every edge of $F$ is contained either in the cycle $\CC_1$ or the cycle $\CC_2$.
Hence, the cycles $\CC_1$, $\CC_2$ and $\CC_3$ form a cycle cover of $G$.

It remains to estimate the lengths of the cycles $\CC_1$, $\CC_2$ and $\CC_3$.
Each edge of $F$ is covered once by the cycles $\CC_1$ and $\CC_2$;
since $|C(E)^A|\le|C(E)^B|$ for every circuit $C$ of $F$, at most half
of the edges of $F$ is also covered by the cycle $\CC_3$. We conclude
that the total length of the constructed cycle cover is at most:
$$3r+2g+b+|F|+|F|/2\le 2(r+g+b)+3w_F/2=$$
$$3(r+g+b+w_F)/2+(r+g+b)/2\le 3m/2+m/6=5m/3\;\mbox{.}$$
This finishes the proof of the theorem.
\end{proof}

\section{Cubic graphs}
\label{sec-cubic}

We are now ready to improve the bound for cubic bridgeless graphs.

\begin{theorem}
\label{thm-cubic}
Every cubic bridgeless graph $\GG$ with $m$ edges
has a cycle cover comprised of three cycles of total length at most $34m/21$.
\end{theorem}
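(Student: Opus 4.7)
The plan is to refine the argument of Theorem~\ref{thm-intermezzo} by combining the stronger rainbow structure provided by Lemma~\ref{lm-rainbow-cubic} with an averaging over the three cyclic permutations of the roles played by the three colours in the Intermezzo-style cover.

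First, apply Lemma~\ref{lm-rainbow-cubic} to $\GG$ to obtain a rainbow $2$-factor $F$ (with red, green, blue edge counts $r,g,b$, so $r+g+b=m/3$ and $|F|=2m/3$) whose $4$-circuits have pattern symmetric to RRRR or RRGG, and whose $8$-circuits have one of the $16$ listed patterns. Then, for each cyclic permutation $\pi$ of which colour plays the role of ``red'' (appearing in all three cycles), ``green'' (in two) and ``blue'' (in one), build the three-cycle cover $\CC^\pi$ exactly as in the proof of Theorem~\ref{thm-intermezzo}. Summing the lengths of the three covers, every colour receives total coefficient $3+2+1=6$ across the three permutations and each pair of colours plays the role $(\text{red},\text{blue})$ in exactly one of them, so
\[
|\CC^{\pi_1}|+|\CC^{\pi_2}|+|\CC^{\pi_3}|
=6(r+g+b)+3|F|+\sum_{C\in F}T(C)
=4m+\sum_{C\in F}T(C),
\]
where $T(C):=|C(\RE\cup\GE)^A|+|C(\RE\cup\BE)^A|+|C(\GE\cup\BE)^A|$. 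Taking the best of the three covers, it suffices to establish $\sum_C T(C)\le 6m/7$, since then the chosen cover has length at most $\frac{1}{3}(4m+6m/7)=34m/21$.

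To bound $\sum_C T(C)$, proceed by a circuit-by-circuit case analysis that sharpens the trivial bound $T(C)\le 3|C|/2$ using the pattern constraints. For every $4$-circuit $C$ allowed by Lemma~\ref{lm-rainbow-cubic}, direct inspection of the two patterns RRRR and RRGG gives $T(C)\le 4$, a saving of $2$ from the trivial $6$. For each of the sixteen allowed $8$-circuit patterns, a patient case analysis (computing the minimum ``A-arc'' sum for each of the three colour pairs) yields $T(C)\le 10$, saving at least $2$ per $8$-circuit. Odd-length circuits contribute small automatic savings coming from the discreteness of $|C(\cdot)^A|$, while $6$-circuits (whose colour multiplicities must all be even) can be treated directly based on their distribution of colours. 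Aggregating these per-circuit bounds gives $\sum_C T(C)\le 3|F|/2-S$, where the total saving $S$ is at least $2(n_4+n_8)$ plus the small contributions from longer circuits, and $n_k$ denotes the number of $k$-circuits of $F$.

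The main obstacle is closing the estimate when $F$ contains few short circuits. Since only $4n_4+8n_8\le|F|=2m/3$ is available a priori, $n_4+n_8$ could in principle be zero, in which case savings from the pattern constraints alone do not reach the required $m/7$. Handling this extremal case is the heart of the argument, and will likely require choosing $F$ more carefully (for instance via a weighting argument in the spirit of Lemma~\ref{lm-rainbow-weighted} that penalises long $2$-factor circuits to force enough short ones), or alternatively showing that when short circuits are scarce, the colour distribution $(r,g,b)$ is forced to be imbalanced enough for the sum $3c_1^\pi+2c_2^\pi+c_3^\pi$ (for a suitable $\pi$) to drop strictly below $2(r+g+b)$, recovering the missing $m/21$ by a different mechanism.
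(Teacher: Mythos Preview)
Your averaging over the three cyclic permutations is a clean reformulation, and the per-circuit bounds $T(C)\le 4$ for $4$-circuits and $T(C)\le 10$ for the sixteen admissible $8$-circuit patterns are correct. The genuine gap is exactly the one you identify at the end, and neither of your speculative fixes works: Lemma~\ref{lm-rainbow-cubic} imposes \emph{no} pattern restriction on circuits of length other than $4$ and $8$, and for an unconstrained $\ell$-circuit (e.g.\ a $12$-circuit with pattern RGRGRGRGRGRG) one has $T(C)=3\ell/2$ on the nose. So if $F$ consists entirely of such circuits, $\sum_C T(C)=m$ and your bound collapses to $5m/3$. There is no weighting argument available that forces short circuits into $F$, and no mechanism ties a scarcity of short circuits to a colour imbalance.

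What the paper does instead is construct a \emph{second} cycle cover whose length estimate moves in the opposite direction in the $d_\ell$'s. The key new idea is to recolour: as long as $G/F$ contains a monochromatic red (or green) circuit, repaint it blue. After this, the red edges and the green edges each form an acyclic subgraph of $G/F$, so $r',g'\le\sum_\ell d_\ell$. Plugging this into the Intermezzo-style cover gives a bound of roughly $\tfrac{3m}{2}+\sum_\ell(\lfloor 3\ell/2\rfloor+3-\tfrac{7\ell}{4})d_\ell$, which is \emph{small} precisely when circuits are long. One then takes the convex combination $\tfrac{5}{7}(\text{first bound})+\tfrac{2}{7}(\text{second bound})$, and the $d_\ell$ terms cancel to give $34m/21$. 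Your proposal is missing this second cover (and the recolouring idea that drives it); without it the argument cannot close.
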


\begin{proof}
We present two bounds on the length of a cycle cover of $\GG$ and
the bound claimed in the statement of the theorem is eventually
obtained as their combination.
In both bounds,
the constructed cycle cover will consist of three cycles.
Fix a rainbow $2$-factor $F$ and
an edge-coloring of the edges not contained in $F$
with the red, green and blue colors
as described in Lemma~\ref{lm-rainbow-cubic}.
Let $\RE$, $\GE$ and $\BE$ be the sets of the red,
green and blue edges, respectively, and
let $r$, $g$ and $b$ be their numbers.
Finally, let $d_{\ell}$ be the number
of circuits of lengths $\ell$ contained in $F$.
By Lemma~\ref{lm-rainbow-cubic}, $d_3=0$.

{\bf The first cycle cover.}
Before we proceed with constructing the first cycle cover,
recall the notation of $C(E)^A$ and $C(E)^B$ introduced
before Theorem~\ref{thm-intermezzo}.
The cycle cover is comprised of three
cycles $\CC_1$, $\CC_2$ and $\CC_3$ which we next define.

Let $C$ be a circuit of the $2$-factor $F$. Lemma~\ref{lm-rainbow-cubic}
allows us to assume that if the length of $C$ is four, then
the pattern of $C$ is either BBBB or GGBB (otherwise, we can---for the purpose
of extending the cycles $\CC_1$, $\CC_2$ and $\CC_3$ to $C$---switch the roles
of the red, green and blue colors and the roles of the cycles $\CC_1$,
$\CC_2$ and $\CC_3$ in the remaining analysis; note that we are not
recoloring the edges, just apply the arguments presented in the next
paragraphs with respect to a different permutation of colors).
Similarly, we can assume that the pattern of
the circuit $C$ of length eight is one of the following $16$ patterns:
\begin{center}
BBBBBBBB, BBBBBBRR, BBBBRRRR, BBBBRRGG, \\
BBRRBBRR, BBRRBBGG, BBBBRBBR, BBBBRGGR, \\
BBRRBRRB, BBRRBGGB, RRGGBRRB, BBBBRBRB, \\
BBBRGRGB, RRBRBRBB, BBRBGBGR and BBRRGRGR.
\end{center}
For every circuit $C$ of $F$, we define subsets $C^1$, $C^2$ and $C^3$
of its edges. The subset $C^1$ is just $C(\RE\cup \GE)^A$.
The subset $C^2$ is either $C(\RE\cup \BE)^A$ or
$C(\RE\cup \BE)^B$---we choose the set with smaller intersection
with $C(\RE\cup \GE)^A$. Finally, $C^3$ is chosen so
that every edge of $C$ is contained in an odd number of the sets $C^i$;
explicitly, $C^3=C^1 \bigtriangleup C^2 \bigtriangleup C$.
Note that $C^3$ is either $C(\GE\cup \BE)^A$ or $C(\GE\cup \BE)^B$.

The cycle $\CC_1$ consists of all red and green edges and
the edges of $C^1$ for every circuit $C$ of $F$.
The cycle $\CC_2$ consists of all red and blue edges and
the edges of $C^2$ for every circuit $C$ of $F$.
Finally, the cycle $\CC_3$ consists of all green and blue edges and
the edges of $C^3$ for every circuit $C$ of $F$.
Clearly, the sets $\CC_1$, $\CC_2$ and $\CC_3$ form cycles.

We now estimate the number of the edges of $C$ contained in $\CC_1$,
$\CC_2$ and $\CC_3$. Let $\ell$ be the length of $C$.
The sum of the numbers edges contained in each of the cycles is:
\begin{eqnarray*}
|C^1| & + & |C^2|+|C^1 \bigtriangleup C^2 \bigtriangleup C| \\
 & = & |C^1\cup C^2|+|C^1\cap C^2|+|C\setminus (C^1\cup C^2)|+|C^1\cap C^2|\\
 & = & |C|+2|C^1\cap C^2|=\ell+2|C^1\cap C^2|
\end{eqnarray*}
Since $|C^1|=|C(\RE\cup \GE)^A|\le |C(\RE\cup \GE)^B|$, the number of edges of
$C^1$ is at most $\ell/2$.
By the choice of $C^2$, $|C^1\cap C^2|\le |C^1|/2\le\ell/4$.
Hence, the sets $\CC_1$, $\CC_2$ and $\CC_3$
contain at most $\ell+2\lfloor\ell/4\rfloor$ edges of the circuit $C$.

The estimate on the number of edges of $C$ contained in the cycles
$\CC_1$, $\CC_2$ and $\CC_3$ can further be improved if the length
of the circuit $C$ is four: if the pattern of $C$ is BBBB,
then $C^1=C(\RE\cup \GE)^A=\emptyset$ and
thus $C^1\cap C^2=\emptyset$.
If the pattern is GGBB,
then $C^1\cap C^2=C(\RE\cup \GE)^A\cap C(\RE\cup \BE)^A=\emptyset$.
In both the cases, it holds that $C^1\cap C^2=\emptyset$ and
thus the cycles $\CC_1$, $\CC_2$ and $\CC_3$ contain
(at most) $|C|+2|C^1\cap C^2|=|C|=\ell=4$ edges of $C$.

\begin{figure}
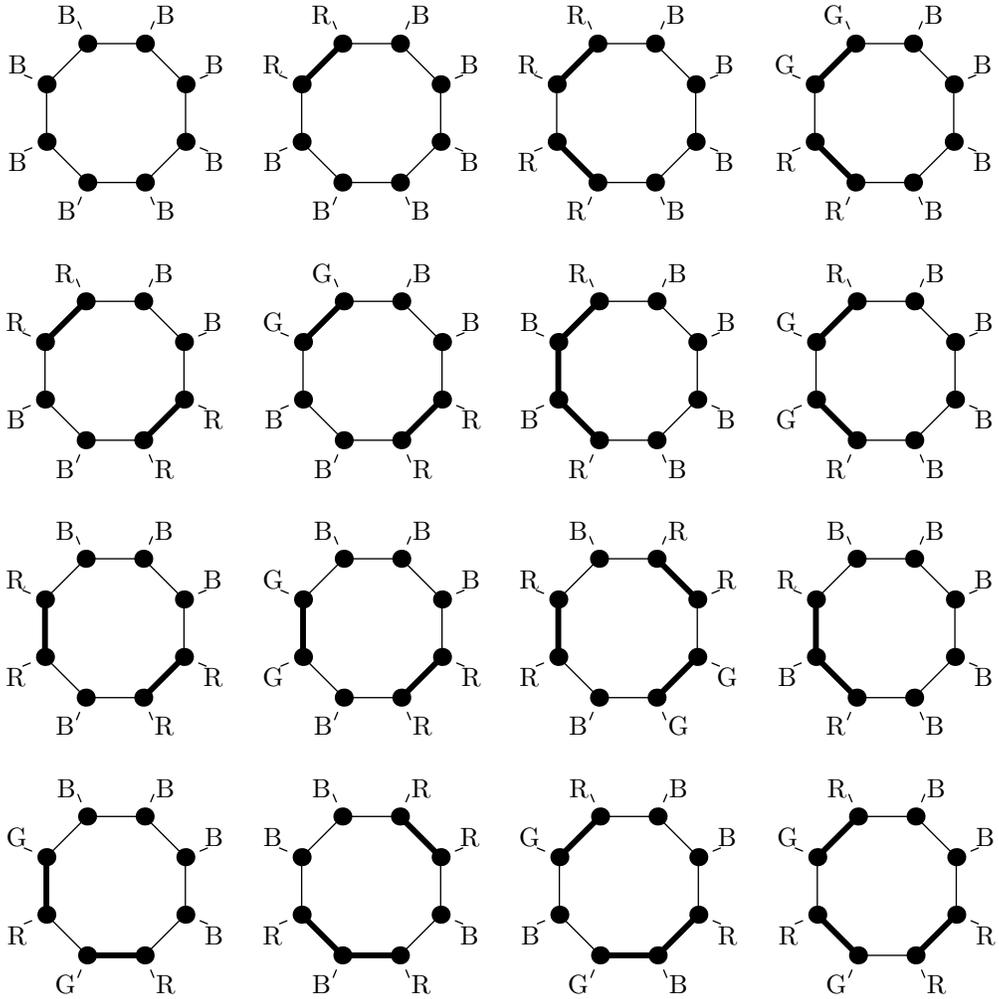

\begin{center}
\epsfbox{scc.3}\hskip 5mm
\epsfbox{scc.4}\hskip 5mm
\epsfbox{scc.5}\hskip 5mm
\epsfbox{scc.6}\vskip 5mm
\epsfbox{scc.7}\hskip 5mm
\epsfbox{scc.8}\hskip 5mm
\epsfbox{scc.9}\hskip 5mm
\epsfbox{scc.10}\vskip 5mm
\epsfbox{scc.11}\hskip 5mm
\epsfbox{scc.12}\hskip 5mm
\epsfbox{scc.13}\hskip 5mm
\epsfbox{scc.14}\vskip 5mm
\epsfbox{scc.15}\hskip 5mm
\epsfbox{scc.16}\hskip 5mm
\epsfbox{scc.17}\hskip 5mm
\epsfbox{scc.18}
\end{center}
\caption{The sets $C^1=C(\RE\cup \GE)^A$ for circuits $C$ with length eight;
         the edges contained in the set are drawn bold.}
\label{fig-cubic-1}
\end{figure}

Similarly, the estimate on the number of edges of $C$ contained
in the cycles can be improved if the length of $C$ is eight. 
As indicated in Figure~\ref{fig-cubic-1},
it holds that $|C^1|=|C(\RE\cup \GE)^A|\le 3$. Hence,
$|C^1\cap C^2|\le|C^1|/2\le 3/2$.
Consequently, $|C^1\cap C^2|\le 1$ and
the number of edges of $C$ included
in the cycles $\CC_1$, $\CC_2$ and $\CC_3$
is at most $|C|+2|C^1\cap C^2|\le 8+2=10$.

Based on the analysis above, we can conclude that the cycles $\CC_1$,
$\CC_2$ and $\CC_3$ contain at most the following number of edges of the $2$-factor $F$
in total:
\begin{eqnarray}
& & 2d_2+4d_4+7d_5+8d_6+9d_7+10d_8+13d_9+14d_{10}+15d_{11}+
     \sum_{\ell=12}^\infty\frac{3\ell}{2}d_{\ell}\nonumber \\
& = & \frac{3}{2}\sum_{\ell=2}^\infty\ell d_{\ell}
 -d_2-2d_4-\frac{1}{2}d_5-d_6-\frac{3}{2}d_7-2d_8-\frac{1}{2}d_9-d_{10}-\frac{3}{2}d_{11}\;\mbox{.} \label{eq-cubic-1}
\end{eqnarray}
Since the $2$-factor $F$ contains $2m/3$ edges, the estimate (\ref{eq-cubic-1})
translates to:
\begin{equation}
m-d_2-2d_4-\frac{1}{2}d_5-d_6-\frac{3}{2}d_7-2d_8-\frac{1}{2}d_9-d_{10}-\frac{3}{2}d_{11}\;\mbox{.}
  \label{eq-cubic-2}
\end{equation}
Since each red, green or blue edge is contained in exactly two of the cycles
$\CC_1$, $\CC_2$ and $\CC_3$ and there are $m/3$ such edges,
the total length of the cycle cover of $\GG$
formed by $\CC_1$, $\CC_2$ and $\CC_3$ does not exceed:
\begin{equation}
\frac{5m}{3}-d_2-2d_4-\frac{1}{2}d_5-d_6-\frac{3}{2}d_7-2d_8-\frac{1}{2}d_9-d_{10}-\frac{3}{2}d_{11}\;\mbox{.}
  \label{eq-cubic-SC}
\end{equation}
This finishes the construction and
the analysis of the first cycle cover of $\GG$.

{\bf The second cycle cover.}
We reuse the $2$-factor $F$ and the coloring of the edges of $\GG$
by red, green and blue colors from the construction of the first cycle cover.
As long as the graph $H=\GG/F$
contains a red circuit, choose a red circuit of $H=\GG/F$ and recolor its edges
with blue. Similarly, recolor edges of green circuits with blue. The modified
edge-coloring still gives a rainbow $2$-factor.
Let $\RE'$, $\GE'$ and $\BE'$ be the sets of red, green and
blue in the modified edge-coloring and $r'$, $g'$ and $b'$ their cardinalities.

The construction of the cycle cover now follows
the lines of the proof
of Theorem~\ref{thm-intermezzo}. The first cycle $\CC_1$ is formed
by the red and green edges and the edges of $C(\RE'\cup \GE')^A$ for every
circuit $C$ of the $2$-factor $F$. The cycle $\CC_2$ is also formed
by the red and green edges and, in addition, the edges of $C(\RE'\cup \GE')^B$
for every circuit $C$ of $F$. Finally, the cycle $\CC_3$ is formed
by the red and blue edges and the edges of $C(\RE'\cup \BE')^A$ for every
circuit $C$ of $F$. Clearly, the sets $\CC_1$, $\CC_2$ and $\CC_3$
are cycles of $\GG$ and they cover all the edges of $\GG$.

Let us now estimate the lengths of the cycles $\CC_1$, $\CC_2$ and $\CC_3$.
Each red edge is contained in all the three cycles, each green edge
in two cycles and each blue edge in one cycle. Each edge of a circuit
$C$ of length $\ell$ of the $2$-factor $F$ is contained either in $\CC_1$ or
in $\CC_2$ and at most half of the edges of $C$ is also contained
in the cycle $\CC_3$. Hence, the total length of the cycles
$\CC_1$, $\CC_2$ and $\CC_3$ is at most:
\begin{equation}
3r'+2g'+b'+\sum_{\ell=2}^\infty\left\lfloor\frac{3\ell}{2}\right\rfloor d_{\ell}\;\mbox{.}  
  \label{eq-cubic-3}
\end{equation}
Since the red edges form an acyclic subgraph of $\GG/F$, the number of red
edges is at most the number of the cycles of $F$, i.e., 
$d_2+d_3+d_4+d_5+\ldots$ Similarly, the number
of green edges does not exceed the number of the cycles of $F$.
Since $r'+g'+b'=m/3$, the expression (\ref{eq-cubic-3}) can be estimated
from above by
\begin{equation}
\frac{m}{3}+2r'+g'+\sum_{\ell=2}^\infty\left\lfloor\frac{3\ell}{2}\right\rfloor d_{\ell}\le
\frac{m}{3}+\sum_{\ell=2}^\infty 3 d_{\ell}+
  \sum_{\ell=2}^\infty\left\lfloor\frac{3\ell}{2}\right\rfloor d_{\ell}=
\frac{m}{3}+\sum_{\ell=2}^\infty\left\lfloor\frac{3\ell}{2}+3\right\rfloor d_{\ell}\;\mbox{.}
  \label{eq-cubic-4}
\end{equation}
Since the $2$-factor $F$ contains $2m/3=2d_2+3d_3+4d_4+\ldots$ edges,
the bound (\ref{eq-cubic-4}) on the number of edges contained
in the constructed cycle cover can be rewritten to
\begin{equation}
\frac{m}{3}+\frac{7\cdot 2\cdot m}{4\cdot 3}+\sum_{\ell=2}^\infty
  \left(\left\lfloor\frac{3\ell}{2}+3\right\rfloor-\frac{7\ell}{4}\right) d_{\ell} \le
\frac{3m}{2}+\sum_{\ell=2}^{10}
  \left(\left\lfloor\frac{3\ell}{2}+3\right\rfloor-\frac{7\ell}{4}\right) d_{\ell}\;\mbox{.}
  \label{eq-cubic-5}
\end{equation}
Note that the last inequality follows
from the fact that
$\left\lfloor\frac{3\ell}{2}+3\right\rfloor-\frac{7\ell}{4}\le
  \frac{3\ell}{2}-\frac{7\ell}{4}+3=3-\frac{\ell}{4}\le 0$ for $\ell\ge 12$ and
the expression $\left\lfloor\frac{3\ell}{2}+3\right\rfloor-\frac{7\ell}{4}=-1/4$
is also non-positive for $\ell=11$.
The estimate (\ref{eq-cubic-5}) can be expanded to the following form (recall that $d_3=0$):
\begin{equation}
\frac{3m}{2}+\frac{5}{2}d_2+2d_4+\frac{5}{4}d_5+\frac{3}{2}d_6+\frac{3}{4}d_7+d_8+\frac{1}{4}d_9+\frac{1}{2}d_{10}\;\mbox{.}
  \label{eq-cubic-LC}
\end{equation}
We remark that the bound (\ref{eq-cubic-LC}) could also be obtained
by a suitable substitution to the bounds presented in~\cite{bib-fan94}.
However, we decided to present the construction to make the paper
self-contained.

The length of the shortest cycle cover of $\GG$ with three cycles
exceeds neither the bound
given in (\ref{eq-cubic-SC}) nor the bound given in (\ref{eq-cubic-LC}).
Hence, the length of such cycle cover of $\GG$ is bounded by any
convex combination of the two bounds, in particular, by the following:
$$\frac{5}{7}\cdot\left(
    \frac{5m}{3}-d_2-2d_4-\frac{1}{2}d_5-d_6-\frac{3}{2}d_7-2d_8-\frac{1}{2}d_9-d_{10}-\frac{3}{2}d_{11}
           \right) +$$
$$\frac{2}{7}\cdot\left(
    \frac{3m}{2}+\frac{5}{2}d_2+2d_4+\frac{5}{4}d_5+\frac{3}{2}d_6+\frac{3}{4}d_7+d_8+\frac{1}{4}d_9+\frac{1}{2}d_{10}
           \right)=$$
$$\frac{34m}{21}-\frac{6}{7}d_4-\frac{2}{7}d_6-\frac{6}{7}d_7-\frac{8}{7}d_8-\frac{2}{7}d_9-\frac{4}{7}d_{10}-\frac{15}{14}d_{11}\le
  \frac{34m}{21}\;\mbox{.}$$
The proof of Theorem~\ref{thm-cubic} is now completed.
\end{proof}

\section{Graphs with minimum degree three}
\label{sec-mindegree}

We first show that it is enough to prove the main theorem of this section (Theorem~\ref{thm-main})
for graphs that do not contain parallel edges of certain type.
We state and prove four auxiliary lemmas.
The first two lemmas deal with the cases when there is a vertex
incident only with parallel edges leading to the same vertex.

\begin{lemma}
\label{lm-parallel-leaf}
Let $\GG$ be an $m$-edge bridgeless graph with vertices $v_1$ and $v_2$
joined by $k\ge 3$ parallel edges. If the degree of $v_1$ is $k$,
the degree of $v_2$ is at least $k+3$ and the graph $\GG'=\GG\setminus v_1$
has a cycle cover with three cycles of length at most $44(m-k)/27$,
then $\GG$ has a cycle cover with three cycles of length at most $44m/27$.
\end{lemma}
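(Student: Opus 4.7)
The plan is to take the given cycle cover $\CC_1, \CC_2, \CC_3$ of $\GG'=\GG\setminus v_1$ (of total length at most $44(m-k)/27$) and extend each $\CC_i$ by adding a suitable multiset of the $k$ parallel edges between $v_1$ and $v_2$, in such a way that (i) each of these $k$ edges is covered, (ii) the resulting subgraphs remain cycles (even degrees at every vertex), and (iii) the total added length is as small as possible. Since $v_1\notin V(\GG')$ and all its incident edges go to $v_2$, adding any subset $S_i\subseteq\{e_1,\ldots,e_k\}$ of the parallel edges to $\CC_i$ only affects the degrees at $v_1$ and $v_2$: the degree at $v_1$ becomes $|S_i|$ and the degree at $v_2$ changes by $|S_i|$; so $\CC_i\cup S_i$ is a cycle of $\GG$ precisely when $|S_i|$ is even.

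I would split the argument into two cases according to the parity of $k$. If $k$ is even, set $S_1=\{e_1,\ldots,e_k\}$ and $S_2=S_3=\emptyset$; the cover then increases by exactly $k$ edges. If $k$ is odd, set $S_1=\{e_1,\ldots,e_{k-1}\}$ (which is fine since $k-1$ is even) and $S_2=\{e_{k-1},e_k\}$ (a digon covering $e_k$), with $S_3=\emptyset$; the cover then grows by $k+1$ edges. In both cases, every $e_j$ is covered, and every $\CC_i\cup S_i$ is a cycle of $\GG$.

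It remains to verify the length bound. The total length of the new cover is at most
\[
\frac{44(m-k)}{27}+(k+1)=\frac{44m}{27}-\frac{17k-27}{27},
\]
(taking the worse, odd-$k$ estimate), which is at most $44m/27$ as soon as $k\ge 27/17$, and in particular whenever $k\ge 2$. Since $k\ge 3$ by hypothesis, the bound $44m/27$ holds, and the extended cover still consists of three cycles. No step here is delicate; the only thing to check carefully is that $|S_i|$ is even in every case, which is the reason for treating odd $k$ separately and for choosing $S_1$ of size $k-1$ rather than $k$. The hypothesis on the degree of $v_2$ plays no role in this particular lemma—it will presumably be used elsewhere to ensure that $\GG'$ itself admits a cycle cover with three cycles of the required length, but it is not needed for the extension step.
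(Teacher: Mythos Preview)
Your proof is correct and essentially identical to the paper's: the same even/odd split on $k$, the same choice $S_1=\{e_1,\dots,e_k\}$ (resp.\ $S_1=\{e_1,\dots,e_{k-1}\}$, $S_2=\{e_{k-1},e_k\}$), and the same length estimate $\frac{44(m-k)}{27}+(k+1)\le\frac{44m}{27}$ for $k\ge 3$. Your remark that the degree hypothesis on $v_2$ is not used in the extension step is also accurate---it is there only so that $\GG'=\GG\setminus v_1$ still has minimum degree three, which is needed for the inductive application of the main theorem.
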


\begin{proof}
Let $\CC_1$, $\CC_2$ and $\CC_3$ be the cycles of total length
at most $44(m-k)/27$ covering the edges of $\GG'$ and $e_1,\ldots,e_k$
the $k$ parallel edges between the vertices $v_1$ and $v_2$.
If $k$ is even, add the edges $e_1,\ldots,e_k$ to $\CC_1$.
If $k$ is odd, add the edges $e_1,\ldots,e_{k-1}$ to $\CC_1$ and
the edges $e_{k-1}$ and $e_k$ to $\CC_2$. Clearly, we have obtained
a cycle cover of $\GG$ with three cycles. The length of the cycles
is increased at most by $k+1$ and thus it is at most
$$\frac{44m-44k}{27}+k+1=\frac{44m-17k+27}{27}\le\frac{44m}{27}\;\mbox{.}$$
\end{proof}

\begin{lemma}
\label{lm-parallel-leaf-special}
Let $\GG$ be an $m$-edge bridgeless graph with vertices $v_1$ and $v_2$
joined by $k\ge 4$ parallel edges. If the degree of $v_1$ is $k$,
the degree of $v_2$ is $k+2$ and the graph $\GG'$ obtained from $\GG$
by removing all the edges between $v_1$ and $v_2$ and suppressing
the vertex $v_2$
has a cycle cover with three cycles of length at most $44(m-k-1)/27$,
then $\GG$ has a cycle cover with three cycles of length at most $44m/27$.
\end{lemma}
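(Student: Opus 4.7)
The plan is to construct the cover of $\GG$ by lifting the given cover of $\GG'$ back to $\GG$ and then attaching the $k$ parallel edges $e_1,\dots,e_k$ between $v_1$ and $v_2$, using a small parity-based adjustment to preserve the cycle property. Let $f_1$ and $f_2$ denote the two edges of $\GG$ incident with $v_2$ other than the $e_i$, and let $f$ be the single edge of $\GG'$ that replaces the two-edge path $f_1 v_2 f_2$ upon suppression of $v_2$. Starting from a cover $\CC_1', \CC_2', \CC_3'$ of $\GG'$ of total length at most $44(m-k-1)/27$, I would first lift each $\CC_i'$ to a subgraph $\CC_i^*$ of $\GG$: whenever $f\in\CC_i'$, substitute the two-edge path $f_1,f_2$ for $f$, and otherwise leave $\CC_i'$ unchanged. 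Each $\CC_i^*$ is a cycle of $\GG$ because the lifting changes the degree at $v_2$ by $0$ or $2$ and leaves every other degree unchanged (the case where $f$ becomes a loop in $\GG'$ is handled identically). Since $f$ lies in at most three of the $\CC_i'$, the total length of the three lifted cycles exceeds that of the original cover by at most $3$.

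After lifting, the only uncovered edges of $\GG$ are the $k$ parallel edges $e_1,\dots,e_k$. I would attach them to one or two of the lifted cycles so that each $\CC_i^*$ retains all even degrees. Concretely, if $k$ is even I would adjoin all of $e_1,\dots,e_k$ to $\CC_1^*$, which adds $k$ to the total length; if $k$ is odd I would adjoin $e_1,\dots,e_{k-1}$ to $\CC_1^*$ and the digon $\{e_{k-1},e_k\}$ to $\CC_2^*$, which adds $k+1$. In either case the appended set of edges contributes an even number of incidences at both $v_1$ and $v_2$ and none elsewhere, so the modified $\CC_i^*$ remain cycles; together the three cycles cover every edge of $\GG$.

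The resulting cover has length at most $44(m-k-1)/27 + 3 + (k+1)$, and a routine rearrangement reduces the desired inequality $44(m-k-1)/27 + k + 4 \le 44m/27$ to $17k \ge 64$, which is satisfied by the hypothesis $k\ge 4$. There is no deep obstacle here; the subtlety is simply that the $+3$ overhead from $f$ possibly appearing in all three lifted cycles, combined with the $+1$ parity penalty in the odd case, is exactly what forces the stronger hypothesis $k\ge 4$ here in comparison with the $k\ge 3$ of Lemma~\ref{lm-parallel-leaf}.
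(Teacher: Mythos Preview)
Your proof is correct and follows essentially the same approach as the paper: lift the three cycles through the suppressed vertex $v_2$ (replacing the single edge $f$ by the path $f_1,f_2$ in each cycle containing it, at total cost at most $3$), then attach the $k$ parallel edges to one or two of the cycles according to the parity of $k$ at cost at most $k+1$, and check $17k\ge 64$. The paper's proof is the same construction with the same arithmetic.
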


\begin{proof}
Let $\CC_1$, $\CC_2$ and $\CC_3$ be the cycles of total length
at most $44(m-k-1)/27$ covering the edges of $\GG'$ and $e_1,\ldots,e_k$
the $k$ parallel edges between the vertices $v_1$ and $v_2$.
Let $v'$ and $v''$ be the two neighbors of $v_2$ distinct from $v_1$.
Note that it can hold that $v'=v''$.
The edges $v_2v'$ and $v_2v''$ are included in those cycles $\CC_i$ that
contain the edge $v'v''$. The edges $e_1,\ldots,e_{k-1}$ are included
to $\CC_1$. In addition, the edge $e_k$ is included to $\CC_1$
if $k$ is even. If $k$ is odd, the edges $e_{k-1}$ and $e_k$ are included to $\CC_2$.

The length of all the cycles is increased by at most $3+k+1=k+4$.
Hence, the total length of the cycle cover is at most
$$\frac{44m-44k-44}{27}+k+4=\frac{44m-17k+64}{27}\le\frac{44m}{27}\;\mbox{.}$$
\end{proof}

In the next two lemmas, we deal with the case that each of the two vertices
joined by parallel edges is adjacent to another vertex.

\begin{lemma}
\label{lm-parallel-inner}
Let $\GG$ be an $m$-edge bridgeless graph with vertices $v_1$ and $v_2$
joined by $k\ge 2$ parallel edges. If the degree of $v_1$ is at least $k+1$,
the degree of $v_2$ is at least $k+2$ and the graph $\GG'$ obtained
by contracting
all the edges between $v_1$ and $v_2$ has a cycle cover with three cycles
of length at most $44(m-k)/27$,
then $\GG$ has a cycle cover with three cycles of length at most $44m/27$.
\end{lemma}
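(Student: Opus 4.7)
My plan is to lift the three cycles $\CC_1, \CC_2, \CC_3$ of $\GG'$ back to $\GG$ using exactly the same non-parallel edges, and then to augment each $\CC_i$ by a carefully chosen subset $P_i$ of the parallel edges $e_1, \ldots, e_k$ between $v_1$ and $v_2$. Every vertex other than $v_1$ and $v_2$ already has even degree in the lifted $\CC_i$. Writing $a_i$ and $b_i$ for the number of edges of $\CC_i$ incident with $v_1$ and $v_2$ respectively in $\GG'$, the hypothesis that $\CC_i$ is a cycle of $\GG'$ forces $a_i + b_i$ to be even, so $a_i \equiv b_i \pmod{2}$. Consequently $\CC_i \cup P_i$ is a cycle of $\GG$ provided only that $|P_i| \equiv a_i \pmod{2}$, and the three augmented cycles will form a cycle cover of $\GG$ as soon as $P_1 \cup P_2 \cup P_3 = \{e_1,\ldots,e_k\}$.

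The combinatorial core of the argument is then to produce $P_1, P_2, P_3$ meeting these parity and covering requirements with $|P_1|+|P_2|+|P_3| \le k+1$. I plan to do this by a short case analysis on $\alpha := |\{i : a_i \text{ odd}\}| \in \{0,1,2,3\}$ and on the parity of $k$. When $\alpha$ and $k$ have matching parity, the $k$ edges admit a partition realising the prescribed parities, giving sum exactly $k$; when they disagree, the three parities force the sum to have the wrong residue modulo $2$, but sum $k+1$ can always be reached by placing one edge into two of the $P_i$'s. Concrete recipes handle every case, for instance $P_1=\{e_1\}$, $P_2=\{e_2\}$, $P_3=\{e_3,\ldots,e_k\}$ when $\alpha=3$ and $k$ is odd, or $P_1=\{e_1,\ldots,e_{k-1}\}$, $P_2=\{e_{k-1},e_k\}$, $P_3=\emptyset$ when $\alpha=0$ and $k$ is odd.

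Assembling, the three resulting cycles cover $\GG$ and have total length at most
\[
\frac{44(m-k)}{27} + (k+1) \;=\; \frac{44m - 17k + 27}{27},
\]
which is at most $44m/27$ exactly when $17k \ge 27$, and this holds because $k \ge 2$.

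The only real obstacle here is clerical rather than conceptual: one must verify that the extreme corners of the case analysis (small $k$ together with $\alpha\in\{0,3\}$, where some $P_i$ would like to be $0$ or $1$) genuinely admit an assignment of the stated size. A uniform recipe seems unlikely, so each of the four residue combinations of $(\alpha\bmod 2,\,k\bmod 2)$ will be handled by its own explicit construction and a one-line parity check.
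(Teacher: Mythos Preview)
Your proposal is correct and follows essentially the same approach as the paper: lift the three cycles of $\GG'$ to $\GG$, observe that the parity of edges at $v_1$ equals that at $v_2$ in each cycle, and add parallel edges according to a case analysis on the number $\alpha$ of cycles with odd parity (the paper's $i_0$) and the parity of $k$, achieving a total increase of at most $k+1$. The paper presents the case analysis as an explicit $8$-row table rather than your prose description, but the constructions and the final arithmetic $\frac{44(m-k)}{27}+k+1 = \frac{44m-17k+27}{27} \le \frac{44m}{27}$ are identical.
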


\begin{proof}
Let $\CC_1$, $\CC_2$ and $\CC_3$ be the cycles of total length
at most $44(m-k)/27$ covering the edges of $\GG'$ and $e_1,\ldots,e_k$
the $k$ parallel edges between the vertices $v_1$ and $v_2$.
By symmetry, we can assume that the cycles
$\CC_1,\ldots,\CC_{i_0}$ contain an odd number of edges
incident with $v_1$ and the cycles $\CC_{i_0+1},\ldots,\CC_3$
contain an even number of such edges for some $i_0\in\{0,1,2,3\}$.
Since $\CC_1,\ldots,\CC_3$ form a cycle cover of $\GG'$,
if $v_1$ is incident with an odd number of edges of $\CC_i$, $i=1,2,3$,
then $v_2$ is incident with an odd number of edges of $\CC_i$ and
vice versa. 

The edges are added to the cycles $\CC_1$, $\CC_2$ and $\CC_3$ as follows
based on the value of $i_0$ and the parity of $k$:
\begin{center}
\begin{tabular}{|c|c|c|c|c|}
\hline
$i_0$ & $k$ & $\CC_1$ & $\CC_2$ & $\CC_3$ \\
\hline
$0$ & odd  & $e_1,\ldots,e_{k-1}$ & $e_{k-1},e_k$ & \\
$0$ & even & $e_1,\ldots,e_k$ & & \\
$1$ & odd  & $e_1,\ldots,e_k$ & & \\
$1$ & even & $e_1,\ldots,e_{k-1}$ & $e_{k-1},e_k$ & \\
$2$ & odd  & $e_1,\ldots,e_k$ & $e_k$ & \\
$2$ & even & $e_1,\ldots,e_{k-1}$ & $e_k$ & \\
$3$ & odd  & $e_1,\ldots,e_{k-2}$ & $e_{k-1}$ & $e_k$ \\
$3$ & even & $e_1,\ldots,e_{k-1}$ & $e_{k-1}$ & $e_k$ \\
\hline
\end{tabular}
\end{center}
Clearly, we have obtained a cycle cover of $\GG$ with three cycles.
The length of the cycles is increased at most by $k+1$ and thus it is at most
$$\frac{44m-44k}{27}+k+1=\frac{44m-17k+27}{27}\le\frac{44m}{27}\;\mbox{.}$$
\end{proof}

\begin{lemma}
\label{lm-parallel-suppress}
Let $\GG$ be an $m$-edge bridgeless graph with vertices $v_1$ and $v_2$
joined by $k\ge 3$ parallel edges. If the degrees of $v_1$ and $v_2$
are $k+1$ and the graph $\GG'$ obtained by contracting all the edges
between $v_1$ and $v_2$ and suppressing the resulting vertex of degree two
has a cycle cover with three cycles of length at most $44(m-k-1)/27$,
then $\GG$ has a cycle cover with three cycles of length at most $44m/27$.
\end{lemma}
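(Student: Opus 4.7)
The plan is to extend the given cycle cover $\CC_1,\CC_2,\CC_3$ of $\GG'$ to a cycle cover of $\GG$, following the strategy of Lemma~\ref{lm-parallel-inner} but with an extra ingredient accounting for the suppression step. Since $v_1$ and $v_2$ have degree $k+1$ in $\GG$ and are joined by $k$ parallel edges, each has exactly one other neighbor; denote them by $u$ and $w$ respectively (possibly $u=w$). Contracting the parallel edges $e_1,\ldots,e_k$ and then suppressing the resulting degree-two vertex replaces the edges $uv_1$ and $v_2w$ of $\GG$ by a single edge $f=uw$ of $\GG'$. Let $S=\{i\in\{1,2,3\}:f\in\CC_i\}$; since the given cycles cover $\GG'$, we have $|S|\ge 1$.

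For each $i$, define $\CC_i^*\subseteq E(\GG)$ by retaining the edges of $\CC_i\setminus\{f\}$ (identified with the corresponding edges of $\GG$) and adding a chosen subset of $\{uv_1,\,v_2w,\,e_1,\ldots,e_k\}$. The parity of degrees at $u$ (and symmetrically at $w$) forces $uv_1,v_2w\in\CC_i^*$ precisely when $i\in S$, while parity at $v_1$ (equivalently at $v_2$) forces the number of $e_j$'s included in $\CC_i^*$ to be odd precisely when $i\in S$. The remaining freedom is used to guarantee that each $e_j$ is included in at least one of the $\CC_i^*$.

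The heart of the proof is then a short case analysis on the value of $|S|\in\{1,2,3\}$ together with the parity of $k$. In each of the six cases one exhibits an explicit distribution of the parallel edges satisfying both the parity and coverage constraints, exactly as in the table appearing in Lemma~\ref{lm-parallel-inner}. A direct count then shows that the total length increase, namely $|S|$ (arising from the net $+1$ per cycle in $S$ after replacing $f$ by $uv_1$ and $v_2w$) plus the total number of parallel-edge inclusions, is at most $k+3$ when $k$ is odd and at most $k+4$ when $k$ is even, the larger bound coming from $|S|=3$. Since $44(k+1)/27\ge k+3$ for every $k\ge 3$ and $44(k+1)/27\ge k+4$ for every even $k\ge 4$, the three resulting cycles form a cycle cover of $\GG$ of total length at most
$$\frac{44(m-k-1)}{27}+\frac{44(k+1)}{27}=\frac{44m}{27}\;\mbox{.}$$
The only real obstacle is the routine bookkeeping of the six cases; the underlying idea is identical to that of Lemma~\ref{lm-parallel-inner}, merely adjusted for the extra edge lost to suppression.
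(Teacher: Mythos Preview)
Your proof is correct and follows essentially the same strategy as the paper: lift the cycle cover through the suppressed edge by including $uv_1$ and $v_2w$ in exactly the cycles of $S$, then distribute the parallel edges according to the table of Lemma~\ref{lm-parallel-inner} with $i_0=|S|$. The one genuine difference is your sharper accounting: by tracking the parity of $k$ you obtain the bound $k+3$ for odd $k$ and $k+4$ for even $k$, which dispatches all $k\ge 3$ uniformly via the inequalities $44(k+1)/27\ge k+3$ (odd $k\ge 3$) and $44(k+1)/27\ge k+4$ (even $k\ge 4$). The paper instead uses the blanket bound $k+4$, which fails for $k=3$, and then handles $k=3$ by a separate argument splitting on whether $|S|\le 2$ or $|S|=3$. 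Your version is thus a mild streamlining of the same proof.
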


\begin{proof}
Let $\CC_1$, $\CC_2$ and $\CC_3$ be the cycles of total length
at most $44(m-k-1)/27$ covering the edges of $\GG'$, let $e_1,\ldots,e_k$
be the $k$ parallel edges between the vertices $v_1$ and $v_2$, and
let $v'_i$ be the other neighbor of $v_i$, $i=1,2$.
Add the edges incident with $v_1v'_1$ and $v_2v'_2$
to those cycles $\CC_1$, $\CC_2$ and $\CC_3$ that contain the edge
$v'_1v'_2$ and then proceed as in the proof of Lemma~\ref{lm-parallel-inner}.
The length of the cycles is increased by at most $3+k+1=k+4$ and
thus it is at most
$$\frac{44m-44k-44}{27}+k+4=\frac{44m-17k+64}{27}\le\frac{44m}{27}$$
where the last inequality holds
unless $k=3$. If $k=3$ and the edge $v'_1v'_2$ is contained
in at most two of the cycles, the length is increased by at most $2+k+1=k+3=6$.
If $k=3$ and the edge $v'_1v'_2$ is contained in three
of the cycles, each of the parallel edges is added to exactly one
of the cycles and thus the length is increased by at most $3+k=6$.
In both cases, the length of the new cycle cover can be estimated as follows:
$$\frac{44m-44\cdot 3-44}{27}+6=\frac{44m-14}{27}\le\frac{44m}{27}\;\mbox{.}$$
\end{proof}

We are now ready to prove our bound for graphs with minimum degree three.

\begin{theorem}
\label{thm-main}
Let $\GG$ be a bridgeless graph with $m$ edges and
with minimum degree three or more. The graph
$\GG$ has a cycle cover of total length at most $44m/27$
that is comprised of at most three cycles.
\end{theorem}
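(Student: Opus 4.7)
The plan is to reduce to the cubic case and then essentially follow the proof of Theorem~\ref{thm-cubic}, with Lemma~\ref{lm-rainbow-mindegree} in place of Lemma~\ref{lm-rainbow-cubic}. I argue by induction on $m$. If $\GG$ contains a configuration of parallel edges fitting the hypothesis of any of Lemmas~\ref{lm-parallel-leaf}--\ref{lm-parallel-suppress}, that lemma reduces the problem to a smaller bridgeless graph of minimum degree at least three; combining the induction hypothesis with the lemma yields the required cover of $\GG$. Hence we may assume no such configuration is present. I then iteratively apply Lemma~\ref{lm-expand} to expand every vertex of degree four or more into a gadget of degree-three vertices joined by new \emph{expansion edges}, producing a bridgeless cubic graph $\GG'$. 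Assigning weight $1$ to the original edges of $\GG$ and weight $0$ to the expansion edges, the total weight is $w_0=m$, and any three-cycle cover of $\GG'$ projects (via contraction of the weight-$0$ edges) to a three-cycle cover of $\GG$ whose length equals its total weight in $\GG'$. It therefore suffices to produce a three-cycle cover of $\GG'$ of weight at most $44m/27$.

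Now I apply Lemma~\ref{lm-rainbow-mindegree} to $\GG'$ with this weighting to obtain a rainbow $2$-factor $F$ of total weight at most $2m/3$ satisfying the specified pattern restrictions on every circuit containing exactly four edges of weight one. Writing $\RE,\GE,\BE$ for the red, green and blue classes of $E(\GG')\setminus F$ with respective weights $r\le g\le b$, I build two candidate covers in parallel with the proof of Theorem~\ref{thm-cubic}. The first uses the sets $C^1=C(\RE\cup\GE)^A$, a choice of $C^2\in\{C(\RE\cup\BE)^A,C(\RE\cup\BE)^B\}$ minimizing $|C^1\cap C^2|$, and $C^3=C^1\bigtriangleup C^2\bigtriangleup C$; the second recolors each red or green circuit of $\GG'/F$ with blue and then proceeds as in the proof of Theorem~\ref{thm-intermezzo}. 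Bounding the weighted length of each candidate and taking a suitable convex combination (as at the end of the proof of Theorem~\ref{thm-cubic}) should then give the claimed bound.

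The main obstacle is the weighted analysis of $\sum_C w(C^1\cap C^2)$ over the circuits of $F$ that contain exactly four edges of weight one. Because the sets $C(E)^A$ and $C(E)^B$ are defined by counting edges rather than weights, an unfavorable distribution of weight-one edges inside a short circuit could inflate this intersection, and the detailed pattern list of Lemma~\ref{lm-rainbow-mindegree} is designed precisely to rule this out. Verifying case by case that each listed pattern (for circuits of lengths $4,5,6,7,8$ with every allowed placement of weight-zero edges) forces $w(C^1\cap C^2)$ to be small, and then choosing the correct convex combination of the two candidate covers to land exactly at $44m/27$, will occupy the bulk of the proof.
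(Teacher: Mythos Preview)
Your plan has two concrete gaps.

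First, the reduction to a cubic graph does not work as you describe. Lemma~\ref{lm-expand} applies only to a vertex of degree exactly four (with no incident parallel edges); it says nothing about vertices of degree five or more. Even if you tried to iterate some generalised expansion at a vertex of high degree, the resulting chain of new vertices would be joined by a path of weight-zero edges, so two weight-zero edges would share a vertex, and this is explicitly forbidden by the hypothesis of Lemma~\ref{lm-rainbow-mindegree}. The paper avoids this by a two-step reduction: vertices of degree at least five are first \emph{split} using Lemma~\ref{lm-split-cyclic} (this creates degree-two vertices but adds no new edges), and only the remaining degree-four vertices are then \emph{expanded} via Lemma~\ref{lm-expand}. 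After suppressing degree-two vertices one obtains a cubic graph in which every weight-zero edge is isolated, as Lemma~\ref{lm-rainbow-mindegree} requires.

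Second, your sketch of the second candidate cover is too optimistic. In the non-cubic setting the matching weight $r_0+g_0+b_0$ is only known to be at least $m/3$, not equal to it, and the first cover's bound depends on this quantity with a positive coefficient while the second cover's bound must depend on it with a negative coefficient so that the convex combination eliminates it (the paper uses weights $5/9$ and $4/9$, not $5/7$ and $2/7$). Achieving the required negative coefficient in the second cover is where the real work lies: the paper does not simply recolor in $\GG'/F$, but passes to an auxiliary graph $\GG''$ obtained by re-identifying certain split-off degree-two vertices with their parents, so that the red and green edge counts can be bounded by the number of circuits of $F$. The parallel-edge reductions of Lemmas~\ref{lm-parallel-leaf}--\ref{lm-parallel-suppress} are then used, not merely to discard parallel edges, but to sharpen this bound on $r+g$ for circuits of weight two (inequality~(\ref{eq-mindegree-2})). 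Without this refinement the $d_2$ term does not cancel and the convex combination overshoots $44m/27$.
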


\begin{proof}
By Lemmas~\ref{lm-parallel-leaf}--\ref{lm-parallel-suppress},
we can assume without loss of generality that if vertices $v_1$ and $v_2$ of $\GG$
are joined by $k$ parallel edges,
then either $k=2$ and the degrees of both $v_1$ and $v_2$ are equal to $k+1=3$, or
$k=3$, the degree of $v_1$ is $k=3$ and the degree of $v_2$ is $k+2=5$ (in particular,
both $v_1$ and $v_2$ have odd degrees).
Note that the graphs $\GG'$ from the statement
of Lemmas~\ref{lm-parallel-leaf}--\ref{lm-parallel-suppress}
are also bridgeless graphs with minimum degree three and have fewer
edges than $\GG$ which implies that the reduction process described
in Lemmas~\ref{lm-parallel-leaf}--\ref{lm-parallel-suppress}
eventually finishes.

As the first step, we modify the graph $\GG$ into bridgeless graphs
$\GG_1,\GG_2,\ldots$ eventually obtaining a bridgeless graph $\GG'$
with vertices of degree two, three and four. Set $\GG_1=\GG$.
If $\GG_i$ has no vertices of degree five or more, let $\GG'=\GG_i$.
If $\GG_i$ has a vertex $v$ of degree five or more,
then Lemma~\ref{lm-split-cyclic} yields that
there are two neighbors, say $v_1$ and $v_2$, of $v$ such that
the graph $\GG_i.v_1vv_2$ is also bridgeless.
We set $\GG_{i+1}$ to be the graph $\GG_i.v_1vv_2$.
We continue while the graph $\GG_i$ has vertices of degree five or more.
Clearly, the final graph $\GG'$ has the same number of edges
as the graph $\GG$ and every cycle of $\GG'$ corresponds to a cycle of $\GG$.

Next, each edge of $\GG'$ is assigned weight one,
each vertex of degree four is expanded to two vertices of degree three
as described in Lemma~\ref{lm-expand} and
the edge between the two new vertices of degree three is assigned weight zero (note that
the vertex splitting preserves the parity of the degree of the split vertex and
thus no vertex of degree four is incident with parallel edges).
The resulting graph is denoted by $\GG_0$.
Note that every cycle $C$ of $\GG_0$ corresponds to a cycle $C'$ of $\GG$ and
the length of $C'$ in $\GG$ is equal to the sum of the weights of the edges of $C$.
Next, the vertices of degree two in $\GG_0$ are suppressed and each edge $e$
is assigned the weight equal to the sum of the weights of edges of the path
of $\GG_0$ corresponding to $e$. The resulting graph is denoted by $\GG'_0$.
Clearly, $\GG'_0$ is a cubic bridgeless graph.
Also note that all the edges of weight zero in $\GG_0$ are also contained
in $\GG'_0$, no vertex of $\GG'_0$ is incident with two edges of weight zero, and
the total weight of the edges of $\GG'_0$ is equal to $m$.

We apply Lemma~\ref{lm-rainbow-mindegree} to the cubic graph $\GG'_0$.
Let $F'_0$ be the rainbow $2$-factor of $\GG'_0$ and
let $F_0$ be the cycle of $\GG_0$ corresponding to the $2$-factor of $F'_0$.
Note that $F_0$ is a union of vertex-disjoint circuits.
Let $\RE_0$, $\GE_0$ and $\BE_0$ be the sets of edges of $\GG_0$
contained in paths corresponding to red, green and blue edges in $\GG'_0$.
Let $r_0$ be the weight of the red edges in $G_0$,
$g_0$ the weight of green edges and $b_0$ the weight of blue edges.
Lemma~\ref{lm-rainbow-mindegree} yields $r_0+g_0+b_0\ge m/3$.

We construct two different cycle covers, each comprised of three cycles, and
eventually combine the bounds on their lengths to obtain the bound claimed
in the statement of the theorem.

{\bf The first cycle cover.}
The first cycle cover that we construct
is a cycle cover of the graph $\GG_0$ (which yields a cycle cover
of $\GG$ of the same length as explained earlier).
Let $d_{\ell}$ be the number of circuits of $F_0$ of weight $\ell$.
Note that $d_3$ can be non-zero since a circuit of weight three need not
have length three in $\GG_0$.

Recall now the notation $C(E)^A$ and $C(E)^B$
used in the proof of Theorem~\ref{thm-intermezzo} for circuits $C$ and
set $E$ of edges that are incident with even number of vertices of $C$.
In addition,
$C(E)_*^A$ denotes the edges of $C(E)^A$ with weight one and
$C(E)_*^B$ denotes such edges of $C(E)^B$.
In the rest of the construction of the first cycle cover,
we always assume that $|C(E)_*^A|\le|C(E)_*^B|$.
The sets $\CC_1$, $\CC_2$ and $\CC_3$ are completed to cycles
in a way similar to that used in the proof of Theorem~\ref{thm-intermezzo}. 

As in the proof of Theorem~\ref{thm-cubic}, we define subsets $C^1$, $C^2$ and $C^3$
of the edges of a circuit $C$ of $F_0$.
The subset $C^1$ are just the edges $C(\RE_0\cup\GE_0)^A$.
The subset $C^2$ are either the edges of $C(\RE_0\cup \BE_0)^A$ or
$C(\RE_0\cup \BE_0)^B$---we choose the set with fewer edges
with weight one in common with $C^1=C(\RE_0\cup \GE_0)^A$.
Finally, $C^3=C^1 \bigtriangleup C^2 \bigtriangleup C$.

The cycle $\CC_1$ consists of all the red and green edges,
i.e., the edges contained in $\RE_0\cup\GE_0$, and
the edges of $C^1$ for every circuit $C$ of $F_0$.
The cycle $\CC_2$ consists of the red and blue edges and
the edges of $C^2$.
Finally, the cycle $\CC_3$ consists of the green and blue edges and
the edges of $C^3$.

We now estimate the number of the edges of $C$ of weight one
contained in $\CC_1$, $\CC_2$ and $\CC_3$.
Let $C_*$ be the edges of weight one contained in the circuit $C$,
$\ell=|C_*|$ and $C_*^i=C^i\cap C_*$ for $i=1,2,3$.
By the choice of $C^2$, the number of edges of weight one
in $C^1\cap C^2$ is $|C^1_*\cap C^2_*|\le |C^1_*|/2$.
Consequently, the number of edges of $C$ of weight one contained
in the cycles $\CC_1$, $\CC_2$ and $\CC_3$ is:
\begin{eqnarray*}
|C^1_*|+|C^2_*|+|C^1_* \bigtriangleup C^2_*\bigtriangleup C_*| & = & \\
|C^1_*\cup C^2_*|+|C^1_*\cap C^2_*|+
|C_*\setminus(C^1_*\cup C^2_*)|+|C^1_*\cap C^2_*| & = & \\
|C_*|+2|C^1_*\cap C^2_*|\;\mbox{.} & &
\end{eqnarray*}
Since $|C(\RE_0\cup\GE_0)_*^A|\le |C(\RE_0\cup\GE_0)_*^B|$,
the number of edges contained in the set
$C_*^1=C(\RE_0\cup\GE_0)_*^A$ is at most $\ell/2$.
By the choice of $C^2$, $|C_*^1\cap C_*^2|\le |C_*^1|/2$.
Consequently, it holds that
\begin{equation}
|C_*^1\cap C_*^2|\le |C_*^1|/2\le\ell/4
\label{eq-mindegree-1}
\end{equation}
and eventually conclude that the sets $\CC_1$, $\CC_2$ and $\CC_3$ contain
at most $\ell+2\lfloor\ell/4\rfloor$ edges of the circuit $C$ with weight one.

If $\ell=4$,
the estimate given in (\ref{eq-mindegree-1}) can be further refined.
Let $C'$ be the circuit of $G'$ corresponding to $C$.
Clearly, $C'$ is a circuit of length four.
Color a vertex $v$ of the circuit $C'$
\begin{description}
\item[red] if $v$ has degree three and is incident with a red edge, or $v$ has degree four and is incident with green and blue edges,
\item[green] if $v$ has degree three and is incident with a green edge, or $v$ has degree four and is incident with red and blue edges,
\item[blue] if $v$ has degree three and is incident with a blue edge, or $v$ has degree four and is incident with red and green edges, and
\item[white] otherwise.
\end{description}
Observe that either $C'$ contains a white vertex or
it contains an even number of red vertices,
an even number of green vertices and an even number of blue vertices.
If $C'$ contains a white vertex, it is easy to verify that
\begin{equation}
|C_*^1|=|C(\RE_0\cup\GE_0)^A_*|\le 1\label{eq-mindegree-pat}
\end{equation}
for a suitable permutation
of red, green and blue colors; the inequality (\ref{eq-mindegree-pat}) also holds
if $C'$ contains two adjacent vertices of the same color (see Figure~\ref{fig-mindegree-pat}).
Next, we show that every such circuit contains a white vertex or two adjacent vertices
with the same color (and thus (\ref{eq-mindegree-pat}) always holds).

\begin{figure}
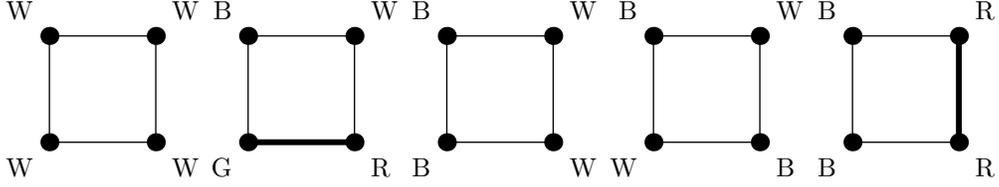

\begin{center}
\epsfbox{scc2.50}
\epsfbox{scc2.51}
\epsfbox{scc2.52}
\epsfbox{scc2.53}
\epsfbox{scc2.54}
\end{center}
\caption{An improvement for circuits of length four considered in the proof
         of Theorem~\ref{thm-main}. The letters R, G, B and W stand for red, green, blue and white colors. Note that it is possible to freely permute the red, green and blue colors.
	 The edges included to $C(\RE_0\cup\GE_0)^A$ are bold.
	 Symmetric cases are omitted.}
\label{fig-mindegree-pat}
\end{figure}

If the circuit of $F'_0$ corresponding to $C$ in $\GG'_0$
contains an edge of weight two or more, then $C$ contains
a white vertex and the estimate (\ref{eq-mindegree-pat}) holds.
Otherwise, all vertices of $C$ have degree three in $\GG_0$ and
thus the circuit $C$ is also contained in $F'_0$.
Since the edges of $C$ have weight zero and one only,
the pattern of $C$ is one of the patterns listed
in Lemma~\ref{lm-rainbow-mindegree}.
A close inspection of possible patterns of $C'$
yields that the cycle $C'$ contains a white vertex or
it contains two adjacent vertices with the same color.
We conclude that the estimate (\ref{eq-mindegree-pat}) applies.
Hence, if $\ell=4$, the estimate (\ref{eq-mindegree-1}) can be improved to $0$.

We now estimate the length of the cycle cover of $\GG_0$ formed by the cycles $\CC_1$, $\CC_2$ and $\CC_3$.
Since each red, green and blue edge is covered by exactly two of the cycles, we conclude that:
\begin{eqnarray}
2(r_0+g_0+b_0)+2d_2+3d_3+4d_4+7d_5+8d_6+9d_7+\sum_{\ell=8}^\infty\frac{3\ell}{2}d_{\ell} & = & \nonumber \\
2(r_0+g_0+b_0)+\frac{3}{2}\sum_{\ell=2}^{\infty}\ell d_{\ell}-d_2-3d_3/2-2d_4-d_5/2-d_6-3d_7/2 & = & \nonumber \\
\frac{3m}{2}+\frac{r_0+g_0+b_0}{2}-d_2-3d_3/2-2d_4-d_5/2-d_6-3d_7/2\;\mbox{.} \label{eq-mindegree-SC}
\end{eqnarray}
Note that we have used the fact that the sum $r_0+g_0+b_0+\sum_{\ell=2}^\infty \ell d_{\ell}$
is equal to the number of the edges of $\GG$.

{\bf The second cycle cover.}
The second cycle cover is constructed in an auxiliary graph $\GG''$
which we now describe. Every vertex $v$ of $\GG$ is eventually
split to a vertex of degree three or four in $\GG'$. 
The vertex of degree four is then expanded. Let $r(v)$ be
the vertex of degree three obtained from $v$ or one of the two vertices obtained
by the expansion of the vertex of degree four obtained from $v$.
By the construction of $F_0$, each $r(v)$ is contained in a circuit
of $F_0$. The graph $\GG''$ is constructed from the graph $\GG_0$ as follows:
every vertex of $\GG_0$ of degree two not contained in $F_0$
that is obtained by splitting from a vertex $v$ is identified
with the vertex $r(v)$. The edges of weight zero
contained in a cycle of $F_0$ are then contracted.
Let $F$ be the cycle of $\GG''$
corresponding to the cycle $F_0$ of $\GG_0$. Note that $F$ is formed by disjoint
circuits and it contains $d_{\ell}$ circuits of weight/length $\ell$.

Observe that $\GG''$ can be obtained from $\GG$
by splitting some of its vertices (perform exactly those splittings
yielding vertices of degree two contained in the circuits of $F_0$) and
then expanding some vertices.
In particular, every cycle of $\GG''$ is also
a cycle of $\GG$. Edges of weight one of $\GG''$ one-to-one correspond
to edges of weight one of $\GG_0$, and edges of weight zero of $\GG''$
correspond to edges of weight zero of $\GG_0$ not contained in $F_0$.
Hence, the weight of a cycle in $\GG''$ is
the length of the corresponding cycle in $\GG$.

The edges not contained in $F$ are red, green and blue (as in $\GG_0$).
Each circuit of $F$ is incident either with an odd number of red edges,
an odd number of green edges and an odd number of blue edges, or
with an even number of red edges, an even number of green edges and
an even number of blue edges (chords are counted twice). 
Let $H=\GG''/F$. If $H$ contains a red 
circuit (which can be a loop), recolor such a circuit to blue.
Similarly, recolor green circuits to blue. Let $\RE$, $\GE$ and $\BE$
be the resulting sets of red, green and blue edges and $r$, $g$ and $b$
their weights. Clearly, $r+g+b=r_0+g_0+b_0$. Also note that each
circuit of $F$ is still incident either with an odd number of red edges,
an odd number of green edges and an odd number of blue edges, or
with an even number of red edges, an even number of green edges and
an even number of blue edges. Since the red edges form an acyclic
subgraph of $H=\GG''/F$, there are at most $\sum_{\ell=2}^\infty d_{\ell}-1$
red edges and thus the total weight $r$ of red edges
is at most $\sum_{\ell=2}^\infty d_{\ell}$ (we forget ``$-1$'' since
it is not important for our further estimates). A symmetric
argument yields that $g\le\sum_{\ell=2}^\infty d_{\ell}$.

Let us have a closer look at circuits of $F$ with weight two.
Such circuits correspond to pairs of parallel edges of $\GG''$ (and
thus of $\GG$). By our assumption, the only parallel edges contained
in $\GG$ are pairs of edges between two vertices $v_1$ and $v_2$ of degree three and
triples of edges between vertices $v_1$ and $v_2$ of degree three and five.
In the former case, both
$v_1$ and $v_2$ have degree three in $\GG''$. Consequently, each
of them is incident with a single colored edge. By the assumption
on the edge-coloring, the two edges have the same color. 
In the latter case, the third edge $v_1v_2$ which corresponds to a loop in $\GG''/F$
is blue. Hence, the other two edges incident with $v_2$
must have the same color, which is red, green or blue.
In both cases,
the vertex of $H$ corresponding to the circuit $v_1v_2$ is an isolated
vertex in the subgraph of $H$ formed by red edges or in the subgraph
formed by green edges (or both). It follows we can improve
the estimate on $r$ and $g$:
\begin{equation}
r+g\le2\sum_{\ell=2}^\infty d_{\ell}-d_2=d_2+2\sum_{\ell=3}^\infty d_{\ell}
\label{eq-mindegree-2}
\end{equation}

We are now ready to construct the cycle cover of the graph $\GG''$.
Its construction closely follows the one presented in the proof of
Theorem~\ref{thm-intermezzo}.
The cycle cover is formed by three cycles $\CC_1$, $\CC_2$ and $\CC_3$.
The cycle $\CC_1$ consists of all red and green edges and
the edges of $C(\RE\cup\GE)^A$ for every circuit $C$ of $F$.
$\CC_2$ consists of all red and green edges and
the edges of $C(\RE\cup\GE)^B$ for every circuit $C$.
Finally, the cycle $\CC_3$ contains all red and blue edges and
the edges of $C(\RE\cup\BE)^A$ for every circuit $C$.
Clearly, the sets $\CC_1$, $\CC_2$ and $\CC_3$ are cycles of $\GG''$ and
correspond to cycles of $\GG$ whose length is equal to the the weight
of the cycles $\CC_1$, $\CC_2$ and $\CC_3$ in $\GG''$.

We now estimate the total weight of the cycles $\CC_1$, $\CC_2$ and $\CC_3$.
Each red edge is covered three times, each green edge twice and
each blue edge once. Each edge of $F$ is contained
in either $\CC_1$ or $\CC_2$ and
for every circuit $C$ of $F$ at most half of its edges
are also contained in $\CC_3$. 
We conclude that the total length of the cycles $\CC_1$, $\CC_2$ and
$\CC_3$ can be bounded as follows (note that we apply (\ref{eq-mindegree-2}) to estimate the
sum $r+g$ and we also use the fact that the number of the edges of $F$
is at most $2m/3$ by Lemma~\ref{lm-rainbow-mindegree}):
\begin{eqnarray}
&& 3r+2g+b+\sum_{\ell=2}^\infty\left\lfloor\frac{3\ell}{2}\right\rfloor d_{\ell} \nonumber\\
&=& m+2r+g+\sum_{\ell=2}^\infty\left\lfloor\frac{\ell}{2}\right\rfloor d_{\ell} \nonumber\\
&\le& m-d_2+\sum_{\ell=2}^\infty\left(\left\lfloor\frac{\ell}{2}\right\rfloor+3\right) d_{\ell} \nonumber\\
&=& \frac{13m}{8}-\frac{5(r_0+g_0+b_0)}{8}-d_2+\sum_{\ell=2}^\infty\left(\left\lfloor\frac{\ell}{2}\right\rfloor+3-\frac{5\ell}{8}\right) d_{\ell} \nonumber\\
&\le& \frac{43m}{24}-\frac{5(r_0+g_0+b_0)}{8}-d_2+\sum_{\ell=2}^\infty\left(\left\lfloor\frac{\ell}{2}\right\rfloor+3-\frac{7\ell}{8}\right) d_{\ell} \nonumber\\
&\le& \frac{43m}{24}-\frac{5(r_0+g_0+b_0)}{8}-d_2+\sum_{\ell=2}^{6}\left(\left\lfloor\frac{\ell}{2}\right\rfloor+3-\frac{7\ell}{8}\right) d_{\ell} \nonumber\\
&=& \frac{43m}{24}-\frac{5(r_0+g_0+b_0)}{8}+5d_2/4+11d_3/8+3d_4/2+5d_5/8+3d_6/4\;\mbox{.}\label{eq-mindegree-LC}
\end{eqnarray}
The last inequality follows from the fact that $\left\lfloor\frac{\ell}{2}\right\rfloor+3-\frac{7\ell}{8}\le 0$
for $\ell\ge 7$.

The length of the shortest cycle cover of $\GG$ with three cycles
exceeds neither the bound
given in (\ref{eq-mindegree-SC}) nor the bound given in (\ref{eq-mindegree-LC}).
Hence, the length of the shortest cycle cover of $\GG$ is bounded by any
convex combination of the two bounds, in particular, by the following:
$$\frac{5}{9}\cdot\left(
    \frac{3m}{2}+\frac{r_0+g_0+b_0}{2}-d_2-3d_3/2-2d_4-d_5/2-d_6-3d_7/2
           \right) +$$
$$\frac{4}{9}\cdot\left(
    \frac{43m}{24}-\frac{5(r_0+g_0+b_0)}{8}+5d_2/4+11d_3/8+3d_4/2+5d_5/8+3d_6/4
           \right)=$$
$$\frac{44m}{27}-2d_3/9-4d_4/9-2d_6/9-5d_7/6
  \le\frac{44m}{27}\;\mbox{.}$$
The proof of Theorem~\ref{thm-main} is now completed.
\end{proof}

\section*{Acknowledgement}

This research was started while the second and the fifth authors
attended PIMS Workshop on Cycle Double Cover Conjecture held
at University of British Columbia, Vancouver, Canada. Support
of Pacific Institute for Mathematical Sciences (PIMS)
during the workshop is gratefully acknowledged.

\end{document}